\documentclass[draft]{amsart}
\usepackage{amssymb}
\usepackage[abbrev]{amsrefs}
\numberwithin{equation}{section}
\def\N{\mathbb N}
\def\R{\mathbb R}

\providecommand{\norm}[1]{\left\lVert#1\right\rVert}
\newcommand{\ip}[2]{\left\langle #1, #2 \right\rangle}

\providecommand{\abs}[1]{\left\lvert#1\right\rvert}

\DeclareMathOperator*{\Argmin}{argmin}
\DeclareMathOperator{\Fix}{\mathcal{F}}
\DeclareMathOperator{\AC}{\mathcal{A}}

\DeclareMathOperator{\Gra}{\mathcal{G}}
\DeclareMathOperator{\Dom}{dom}
\DeclareMathOperator{\D}{\mathcal{D}}
\DeclareMathOperator{\Prox}{Prox}
\DeclareMathOperator{\Zer}{\mathcal{Z}}
\newcommand{\CAT}{\textup{CAT}}

\theoremstyle{plain}
\newtheorem{theorem}{Theorem}[section]
\newtheorem{lemma}[theorem]{Lemma}

\newtheorem{corollary}[theorem]{Corollary}

\theoremstyle{definition}

\theoremstyle{remark}
\newtheorem{remark}[theorem]{Remark}
\allowdisplaybreaks
\title[Finding zero points of monotone vector fields] 
{The proximal point method and its two variants for monotone vector fields in Hadamard spaces}
\author[P.~Chaipunya]{Parin~Chaipunya}
\address[P.~Chaipunya]
{Department of Mathematics, Faculty of Science, 
King Mongkut's University of Technology Thonburi, 
126 Pracha Uthit Rd., Bang Mod, Thung Khru, 
Bangkok 10140, Thailand}
\email{parin.cha@kmutt.ac.th}
\author[F.~Kohsaka]{Fumiaki~Kohsaka}
\address[F.~Kohsaka]
{Department of Mathematical Sciences, Tokai University, 
Kitakaname, Hiratsuka, Kanagawa 259-1292, Japan}
\email{f-kohsaka@tokai.ac.jp}
\subjclass[2010]{}
\keywords{}
\date{\today; File: \jobname}
\begin{document}
\begin{abstract}
 We prove existence and convergence of 
 sequences generated by the proximal point method and its two variants 
 for monotone vector fields in Hadamard spaces. 
 Before obtaining our results, we investigate some fundamental properties 
 of tangent spaces, resolvents, and monotone vector fields in such spaces. 
\end{abstract}
\maketitle

\section{Introduction}

Many nonlinear problems arising in the fields of nonlinear analysis and 
convex analysis can be formulated as the problem of finding a solution 
to the equation 
\begin{align}\label{eq:intro:zero-A}
 0\in Au
\end{align}
for a maximal monotone operator $A\colon H\to 2^H$ defined in a real Hilbert space $H$. 
A typical problem of this type is the minimization problem for  
a proper lower semicontinuous convex function 
$f\colon H\to (-\infty, \infty]$. 
In this case, the equation~\eqref{eq:intro:zero-A} for $A=\partial f$ 
is reduced to 
\begin{align}\label{eq:intro:zero-partial-f}
 f(u)=\inf f(H). 
\end{align}

The proximal point method was first introduced by Martinet~\cite{MR0298899}. 
Rockafellar~\cite{MR0410483} studied it more generally for maximal monotone 
operators in Hilbert spaces. This is a well-known method for approximating 
a solution to~\eqref{eq:intro:zero-A} which generates a sequence in $H$ by 
$x_1\in H$ and 
\begin{align}\label{eq:intro:PPA}
 \frac{1}{\lambda_n} \bigl(x_{n} - x_{n+1}\bigr) \in Ax_{n+1} \quad (n=1,2,\dots), 
\end{align}
where $\{\lambda_n\}$ is a sequence of positive real numbers. 
It is known that for a given point $x_n$ in the space $H$ and 
$\lambda_n>0$, there 
exists a unique $x_{n+1}$ in $H$ satisfying~\eqref{eq:intro:PPA}. 
Rockafellar~\cite{MR0410483} proved that if $\inf_n\lambda_n >0$, then 
the following hold. 
\begin{enumerate}
 \item[(i)] $\{x_n\}$ is bounded if and only if~\eqref{eq:intro:zero-A} has a solution; 
 \item[(ii)] if~\eqref{eq:intro:zero-A} has a solution, then $\{x_n\}$ is weakly convergent 
 to a solution to~\eqref{eq:intro:zero-A}. 
\end{enumerate}

The asymptotic behavior of the proximal point method 
is deeply related to the firm nonexpansiveness 
of the resolvent $J_{\lambda}$ of $A$ defined by 
\begin{align*}
J_{\lambda}x = \left\{z\in H: \frac{1}{\lambda}(x-z)\in Az\right\} 
\end{align*}
for all $x\in H$ and $\lambda>0$. 
It is also known that for each $\lambda>0$, 
the equation~\eqref{eq:intro:zero-A} 
is equivalent to $J_{\lambda}u=u$. 
The mapping $J_{\lambda}$ is a single-valued and 
firmly nonexpansive mapping of $H$ into itself, that is, 
\begin{align*}
 \norm{J_{\lambda}x-J_{\lambda}y}^2 
 \leq \ip{J_{\lambda}x-J_{\lambda}y}{x-y}
\end{align*}
for all $x, y\in H$. 

Since 1976 when Rockafellar~\cite{MR0410483} studied 
the proximal point method 
there had been various kinds of researches of the method 
in the framework of Hilbert spaces, Banach spaces, and $\CAT(\kappa)$ spaces 
for a real $\kappa$. In particular, Aoyama, Kohsaka, and Takahashi~\cite{MR2780284} 
obtained existence and convergence theorems on two variants of the proximal point 
method for monotone operators in Banach spaces. On the other hand, 
Ba{\v{c}}{\'a}k~\cite{MR3047087} 
obtained a $\Delta$-convergence theorem for a sequence generated by 
the proximal point method for a proper lower semicontinuous convex 
function defined in $\CAT(0)$ spaces; see also~\cite{MR3241330}. 
The theorem by Ba{\v{c}}{\'a}k~\cite{MR3047087} were based on 
some fundamental results on convex analysis in Hadamard spaces 
obtained by Jost~\cite{MR1360608} and Mayer~\cite{MR1651416}. 

Later, Chaipunya and Kumam~\cite{MR3691338} 
and Khatibzadeh and Ranjbar~\cite{MR3679017} studied 
the asymptotic behavior of the proximal point method 
for monotone vector fields and monotone operators 
in $\CAT(0)$ spaces, respectively. 
It should be noted that they applied the concept of 
dual spaces of $\CAT(0)$ spaces introduced by 
Ahmadi Kakavandi and Amini~\cite{MR2680038} 
to their studies on the proximal point method.  
Further, Kimura and Kohsaka~\cite{MR3574140} obtained 
existence and convergence theorems related to 
two modified proximal point methods for convex functions 
in such spaces. 

In 2021, Chaipunya, Kohsaka, and Kumam~\cite{MR4292325} 
introduced a concept of tangent spaces in $\CAT(0)$ spaces 
and defined the concepts of monotone vector fields and their resolvents  
in such spaces. Further, they also obtained a generation 
theorem of a nonexpansive semigroup by monotone vector fields 
in $\CAT(0)$ spaces. 

The aim of this paper is to obtain existence and convergence theorems 
for sequences generated by the proximal point method and its two variants 
for monotone vector fields in $\CAT(0)$ spaces. To obtain our results 
we also study some fundamental properties of tangent spaces and 
resolvents of such vector fields in the space. 
The definition of tangent spaces of $\CAT(0)$ spaces in this paper 
is slightly different from the one in our former work given in~\cite{MR4292325}. 

\section{Preliminaries}

We denote by $\R$ and $\N$ the sets of real numbers and 
positive integers, respectively. 
The set $\R \cup \{\infty\}$ is denoted by $(-\infty, \infty]$. 
We denote by $\R^2$ the $2$-dimensional Euclidean space 
with inner product $\ip{\,\cdot\, }{\,\cdot \,}$ 
and norm $\abs{\,\cdot \,}$. 
We denote by $\Fix (T)$ the fixed point set of a self mapping $T$ 
on a nonempty set $X$ 
defined by 
\[
 \Fix (T)=\{u\in X: Tu=u\}. 
\]
For a function $f$ of a nonempty set $X$ into 
$(-\infty, \infty]$, we denote 
by $\Argmin_X f$ or $\Argmin_{y\in X} f(y)$ 
the set of minimizers of $f$ defined by 
\[
 \Argmin_X f = \bigl\{u\in X: f(u) = \inf f(X)\bigr\}. 
\]

We know the following lemmas. 
\begin{lemma}[\cite{MR2338104}]\label{lem:real-sequence-AKTT}
 Let $\{s_n\}$ be a sequence of nonnegative real numbers, $\{\alpha_n\}$ a sequence in $[0,1]$ 
 such that $\sum_{n=1}^{\infty}\alpha_n = \infty$, and $\{t_n\}$ a real sequence such that 
 $\limsup_n t_n\leq 0$. If 
 \begin{align}\label{eq:real-sequence-AKTT}
  s_{n+1} \leq (1-\alpha_n) s_n + \alpha_n t_n
 \end{align}
 for all $n\in \N$, then $\lim_{n}s_n =0$. 
\end{lemma}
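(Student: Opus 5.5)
The plan is the standard two-step argument: first show that $\{s_n\}$ is bounded, then squeeze $\limsup_n s_n$ down to zero using the divergence of $\sum_n \alpha_n$.

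\emph{Step 1 (upper bound by telescoping).} Fix $\varepsilon>0$. Since $\limsup_n t_n\le 0$, there is $N\in\N$ with $t_n\le\varepsilon$ for all $n\ge N$. For $n\ge N$, inequality~\eqref{eq:real-sequence-AKTT} gives
\[
 s_{n+1}-\varepsilon\le (1-\alpha_n)(s_n-\varepsilon),
\]
because $\alpha_n t_n\le \alpha_n\varepsilon$ and $\varepsilon=(1-\alpha_n)\varepsilon+\alpha_n\varepsilon$. Iterating this from $N$ to $m$ yields
\[
 s_{m+1}-\varepsilon\le \prod_{k=N}^{m}(1-\alpha_k)\,(s_N-\varepsilon),
\]
and this holds whatever the sign of $s_N-\varepsilon$. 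The iteration is legitimate because each factor $1-\alpha_k$ is nonnegative: multiplying both sides of an inequality by a nonnegative number preserves it, so the estimate passes from one index to the next. If $s_N-\varepsilon\le 0$, the right-hand side is $\le 0$. If $s_N-\varepsilon>0$, the right-hand side is at most $\prod_{k=N}^{m}(1-\alpha_k)\,s_N$. This case split is the only delicate point of the argument.

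\emph{Step 2 (the product tends to zero).} Using $1-x\le e^{-x}$ for $x\in[0,1]$, we get
\[
 \prod_{k=N}^{m}(1-\alpha_k)\le \exp\Bigl(-\sum_{k=N}^{m}\alpha_k\Bigr)\to 0 \quad (m\to\infty),
\]
since $\sum_{n}\alpha_n=\infty$ and the tail from $N$ on still diverges. Therefore $\limsup_m s_{m+1}\le\varepsilon$.

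\emph{Step 3 (conclusion).} Because $\varepsilon>0$ was arbitrary, $\limsup_n s_n\le 0$. Combined with $s_n\ge 0$ for all $n$, this gives $\lim_n s_n=0$.
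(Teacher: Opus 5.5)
Your argument is correct. The paper gives no proof of its own and only cites Aoyama, Kimura, Takahashi, and Toyoda; your telescoped estimate $s_{m+1}-\varepsilon\le \prod_{k=N}^{m}(1-\alpha_k)\,(s_N-\varepsilon)$, together with $\prod_{k=N}^{m}(1-\alpha_k)\le \exp\bigl(-\sum_{k=N}^{m}\alpha_k\bigr)\to 0$, is essentially the standard argument behind that reference (and, as you implicitly saw, the separate boundedness step announced in your plan is never actually needed).
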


\begin{lemma}[\cites{MR3570781, MR2847453}]\label{lem:real-sequence-KSY}
 Let $\{s_n\}$ be a sequence of nonnegative real numbers, $\{\alpha_n\}$ a sequence in $(0,1]$ 
 such that $\sum_{n=1}^{\infty}\alpha_n = \infty$, and $\{t_n\}$ a real sequence 
 such that~\eqref{eq:real-sequence-AKTT} holds for all $n\in \N$. 
 If $\limsup_i t_{n_i}\leq 0$ whenever $\{n_i\}$ is an increasing sequence in $\N$ satisfying 
 \begin{align*}
  \limsup_{i\to \infty} \bigl(s_{n_i}-s_{n_i+1}\bigr) \leq 0, 
 \end{align*}
 then $\lim_{n}s_n =0$. 
\end{lemma}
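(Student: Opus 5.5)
We argue by contradiction on subsequences, using the first lemma for the tail. Assume $s_n$ does not tend to $0$. We split into two cases according to whether $\{s_n\}$ is eventually nonincreasing.

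\emph{Case 1: $s_{n+1}\le s_n$ for all $n\ge n_0$.} Then $\{s_n\}$ converges, since it is nonincreasing and bounded below by $0$. Hence $s_n-s_{n+1}\to 0$, and the hypothesis applied to the full sequence $n_i=i$ gives $\limsup_n t_n\le 0$. Now Lemma~\ref{lem:real-sequence-AKTT} (with $\alpha_n\in(0,1]\subset[0,1]$) yields $s_n\to0$.

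\emph{Case 2: the set $\{n: s_n<s_{n+1}\}$ is infinite.} We use the Maing\'e-type construction. For large $n$, let
\[
\tau(n)=\max\{k\le n: s_k<s_{k+1}\}.
\]
Then $\tau(n)$ is nondecreasing, $\tau(n)\to\infty$, $s_{\tau(n)}<s_{\tau(n)+1}$, and $s_n\le s_{\tau(n)+1}$, because $s$ is nonincreasing on $[\tau(n)+1,n]$.

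Since $s_{\tau(n)}-s_{\tau(n)+1}<0$, every increasing enumeration $\{n_i\}$ of the values of $\tau$ satisfies the hypothesis, so $\limsup_i t_{n_i}\le0$. From the recursion at $k=\tau(n)$ and $s_k<s_{k+1}$ we get
\[
s_k<(1-\alpha_k)s_k+\alpha_k t_k .
\]
As $\alpha_k>0$, this gives $s_k<t_k$. Hence $\limsup_n s_{\tau(n)}\le0$, so $s_{\tau(n)}\to0$. Also $s_{\tau(n)+1}\le (1-\alpha_{\tau(n)})s_{\tau(n)}+\alpha_{\tau(n)}t_{\tau(n)}\le s_{\tau(n)}+\max(t_{\tau(n)},0)$, whose $\limsup$ is at most $0$. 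Therefore $s_n\le s_{\tau(n)+1}\to0$.

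The main subtlety is in Case 2: the values $\tau(n)$ repeat, so the hypothesis must be applied to the strictly increasing enumeration of the set $\{k: s_k<s_{k+1}\}$ rather than to $\tau(n)$ itself. That set is infinite, and $\limsup_i t_{n_i}\le0$ along it transfers to $\limsup_n t_{\tau(n)}\le0$. Note that $\sum\alpha_n=\infty$ is used only in Case 1, through Lemma~\ref{lem:real-sequence-AKTT}.
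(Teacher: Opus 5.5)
The paper gives no proof of this lemma (it is quoted from the cited references), so there is no in-paper argument to compare with. Judged on its own, your proof is correct and complete.

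Case~1 is a clean reduction to Lemma~\ref{lem:real-sequence-AKTT}: the hypothesis applied to $n_i=i$ gives $\limsup_n t_n\le 0$. Case~2 is the usual Maing\'e-type argument. You apply the hypothesis to the right object, namely the increasing enumeration of $S=\{k: s_k<s_{k+1}\}$, which is exactly the range of $\tau$. The transfer to $\limsup_n t_{\tau(n)}\le 0$ via $\tau(n)\to\infty$ is valid.

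Three small remarks:
\begin{enumerate}
 \item[(a)] The opening assumption $s_n\not\to 0$ is never used, since each case proves $s_n\to 0$ directly. You can drop the contradiction framing.
 \item[(b)] The end of Case~2 can be shortened. For $k\in S$ you have $s_k<t_k$, so the right-hand side of the recursion is a convex combination of two numbers not exceeding $t_k$. Hence $s_{\tau(n)+1}\le t_{\tau(n)}$ and $0\le s_n\le t_{\tau(n)}$, which gives $\limsup_n s_n\le 0$ without first showing $s_{\tau(n)}\to 0$.
 \item[(c)] Positivity of $\alpha_k$ is automatic for $k\in S$, because $\alpha_k=0$ would force $s_{k+1}\le s_k$. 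So your argument actually proves the lemma for $\{\alpha_n\}\subset[0,1]$. Your observation that $\sum\alpha_n=\infty$ enters only through Case~1 is accurate.
\end{enumerate}
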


The following is a slight modification of 
Alexandrov's lemma in~\cite{MR1744486}. 

\begin{lemma}\label{lem:Alex}
 Let $x,y,z$ be nonzero points in $\R^2$ 
% Let $x,y,z$ be distinct nonzero points in $\R^2$, 
% and $u$ an element in $\R^2$ which is orthogonal to $y$. 
% Suppose that $\ip{x}{u}\ip{z}{u}\leq 0$ 
 and let 
 \[
  \theta = \arccos \frac{\ip{x}{y}}{\abs{x}\abs{y}} 
 \quad \textrm{and} \quad 
  \theta' = \arccos \frac{\ip{z}{y}}{\abs{z}\abs{y}}. 
 \]
 If $\theta+\theta' \geq \pi$, 
 then 
 \[
  \abs{x-y} + \abs{y-z} \geq \abs{x} + \abs{z}. 
 \]
\end{lemma}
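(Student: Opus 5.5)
We prove the inequality by rotating one of the triangles about the origin so that the broken line $x \to y \to z$ becomes comparable to a path through the origin.

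Rotate $z$ about the origin to a point $z'$ with $\abs{z'}=\abs{z}$ and $\abs{y-z'}=\abs{y-z}$, chosen so that $z'$ lies on the opposite side of the line $\R y$ from $x$. Since rotations about $0$ fix the origin, the angle between $z'$ and $y$ is still $\theta'$. Concretely, write $x$ and $z'$ in polar form relative to the direction of $y$. Place $y$ on the positive horizontal axis, $x$ at angle $\theta$ (measured counterclockwise), and $z'$ at angle $-\theta'$. Here $\theta,\theta'\in[0,\pi]$ are the arccosines in the statement.

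The key observation is that the angle from $x$ to $z'$ going through the side containing $y$ is $\theta+\theta'$. When $\theta+\theta'\geq\pi$, the segment $[x,z']$ therefore meets the closed half-line $\{-t y: t\geq 0\}$ opposite to $y$.

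\emph{Case 1: $\theta+\theta'\leq 2\pi$.} The segment from $x$ to $z'$ sweeps through angles $[\pi, \theta]$ and $[-\theta', -\pi]$. It crosses the ray at angle $\pi$, at some point $w=-sy$ with $s\geq 0$. Possibly the crossing is at $w=0$, when $\theta+\theta'=\pi$ exactly; then $x$ and $z'$ are collinear with $0$ on opposite sides. Using the triangle inequality twice gives
\[
\abs{x-y}+\abs{y-z}=\abs{x-y}+\abs{y-z'}\geq \abs{x-z'}=\abs{x-w}+\abs{w-z'}.
\]
It remains to compare $\abs{x-w}+\abs{w-z'}$ with $\abs{x}+\abs{z'}$, which is the step I expect to be the main obstacle. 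Since $w$ lies on the segment $[x,z']$, the triangle inequality alone runs the wrong way. I would instead use the first inequality directly: the polygonal path $x\to y\to z'$ lies on the $y$-side of the line, and the origin lies inside the triangle $x,y,z'$ or on its boundary. This holds because $\theta+\theta'\geq\pi$ means the ray from $0$ away from $y$ meets $[x,z']$, so $0$ lies on the segment from $y$ to the point $w$. For a point $0$ in a triangle with vertices $x,y,z'$, the path $x\to 0\to z'$ is a convex curve enclosed by the convex path $x\to y\to z'$ with the same endpoints. The standard monotonicity of perimeter for nested convex sets, applied to the triangle $x0z'$ contained in the triangle $xyz'$, gives
\[
\abs{x}+\abs{z'}\leq \abs{x-y}+\abs{y-z'}.
\]
If one prefers to avoid the perimeter lemma, the same conclusion follows by extending $x\to 0$ to meet $[y,z']$ at a point $p$ and applying the triangle inequality twice:
\[
\abs{x}+\abs{z'}\leq\abs{x}+\abs{0-p}+\abs{p-z'}=\abs{x-p}+\abs{p-z'},
\]
and then $\abs{x-p}\leq\abs{x-y}+\abs{y-p}$.

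\emph{Case 2: degenerate configurations.} Since $\theta,\theta'\leq\pi$ we always have $\theta+\theta'\leq 2\pi$, so Case 1 covers everything. The only remaining degenerate situations are $\theta=\pi$ or $\theta'=\pi$, where $x$ or $z'$ lies on the ray opposite $y$. These are handled by the same chain with the point $p$ chosen as the relevant vertex. Finally $\abs{z'}=\abs{z}$ and $\abs{y-z'}=\abs{y-z}$ give the claim.
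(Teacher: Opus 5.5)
Your argument is correct, but it takes a different route from the paper. The paper moves $z$ to the point $z'=-(\abs{z}/\abs{x})x$, antipodal to $x$ with $\abs{z'}=\abs{z}$. The angle between $y$ and this $z'$ is $\pi-\theta\le\theta'$, so the law of cosines and the monotonicity of $\cos$ on $[0,\pi]$ give $\abs{y-z'}\le\abs{y-z}$. A single triangle inequality then finishes: $\abs{x-y}+\abs{y-z}\ge\abs{x-y}+\abs{y-z'}\ge\abs{x-z'}=\abs{x}+\abs{z}$.

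You instead keep the angle $\theta'$ fixed, so $\abs{y-z'}=\abs{y-z}$ exactly. You use the hypothesis $\theta+\theta'\ge\pi$ to place the origin in the closed triangle with vertices $x,y,z'$, and then apply Euclid's interior-point inequality (\emph{Elements} I.21), proved by two triangle inequalities through the point $p\in[y,z']$.

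The paper's version is shorter and needs no case distinctions: collinear and boundary configurations ($\theta$ or $\theta'$ equal to $0$ or $\pi$) are covered automatically. In your approach one must check that $[x,z']$ crosses the half-line $\{-ty:t\ge 0\}$ and treat degenerate triangles separately, which you only sketch. Your version, in exchange, is purely synthetic and avoids the cosine comparison.

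A few cosmetic points:
\begin{itemize}
\item The angular intervals are written backwards.
\item ``Rotations fix the origin'' is not the reason the angle between $z'$ and $y$ is $\theta'$. It is cleaner to define $z'=\abs{z}(\cos\theta',-\sin\theta')$ directly in your coordinates.
\item The detour through $w$ and $\abs{x-w}+\abs{w-z'}$, and the separate ``Case 2'', can be dropped.
\end{itemize}
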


%For the sake of completeness, we give the proof. 
\begin{proof}
 Setting $z'=-(\abs{z}/\abs{x})x$, we have 
 $\abs{z'}=\abs{z}$ and 
 \[
  \arccos \frac{\ip{y}{z'}}{\abs{y}\abs{z'}}
  = \arccos \left(- \frac{\ip{y}{x}}{\abs{y}\abs{x}}\right)
  = \pi - \arccos \frac{\ip{y}{x}}{\abs{y}\abs{x}}
  = \pi - \theta. 
 \]
 This implies that 
 \[
  \cos (\pi-\theta) 
  = \frac{\ip{y}{z'}}{\abs{y}\abs{z'}}
  = \frac{\abs{y}^2 + \abs{z'}^2 - \abs{y-z'}^2}
    {2\abs{y}\abs{z'}}
  = \frac{\abs{y}^2 + \abs{z}^2 - \abs{y-z'}^2}
    {2\abs{y}\abs{z}}. 
 \]
 We also have 
 \[
  \cos \theta ' 
  = \frac{\ip{y}{z}}{\abs{y}\abs{z}}
  = \frac{\abs{y}^2 + \abs{z}^2 - \abs{y-z}^2}
    {2\abs{y}\abs{z}}. 
 \]
 Since $0\leq \pi -\theta \leq \theta ' \leq \pi$, we obtain 
 \[
 \frac{\abs{y}^2 + \abs{z}^2 - \abs{y-z'}^2}
    {2\abs{y}\abs{z}}
 = \cos (\pi-\theta)
 \geq 
 \cos \theta ' 
 = \frac{\abs{y}^2 + \abs{z}^2 - \abs{y-z}^2}
    {2\abs{y}\abs{z}}
 \]
 and hence 
 \[
  \abs{y-z'} \leq \abs{y-z}. 
 \]
 This implies that 
 \[
  \abs{x-y}+\abs{y-z}
  \geq \abs{x-y}+\abs{y-z'}
  \geq \abs{x-z'} 
= \abs{
  \left(1+\frac{\abs{z}}{\abs{x}}\right)x}
  =\abs{x}+\abs{z}. 
 \]
 This completes the proof. 
\end{proof}

Let $(X, \rho)$ be a metric space. 
We denote by $X^2$ the set $X\times X$. 
Each element $(x,y)$ in $X^2$ is denoted by $\overrightarrow{xy}$. 
The quasilinearization~\cite{MR2390077} on $X$ 
is defined by 
\[
 \ip{\overrightarrow{xy}}{\overrightarrow{zw}} 
 = \frac{1}{2} \bigl(\rho(x,w)^2 + \rho(y,z)^2 
 - \rho(x,z)^2 - \rho(y,w)^2 \bigr)
\]
for all $\overrightarrow{xy}, \overrightarrow{zw}\in X^2$. 
If $X$ is a nonempty subset of a real Hilbert space, 
then 
\[
  \ip{\overrightarrow{xy}}{\overrightarrow{zw}} 
  = \ip{x-y}{z-w}
\]
for all $x,y,z,w\in X$. We can prove the following simple lemma. 

\begin{lemma}
 If $(X, \rho)$ is a metric space, then 
 \[
  \abs{\ip{\overrightarrow{xy}}{\overrightarrow{xz}}} 
  \leq \rho(x,y)\rho(x,z)
 \]
 for all $x,y,z\in X$. 
\end{lemma}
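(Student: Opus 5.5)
Specializing the definition of the quasilinearization gives
\[
 \ip{\overrightarrow{xy}}{\overrightarrow{xz}}
 = \frac{1}{2}\bigl(\rho(x,z)^2 + \rho(y,x)^2 - \rho(x,x)^2 - \rho(y,z)^2\bigr)
 = \frac{1}{2}\bigl(\rho(x,y)^2 + \rho(x,z)^2 - \rho(y,z)^2\bigr).
\]
The claim is therefore a pair of one-sided estimates on $\rho(y,z)^2$. We derive both from the triangle inequality, so no curvature assumption is needed.

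For the upper bound, the triangle inequality gives $\rho(y,z)\geq \abs{\rho(x,y)-\rho(x,z)}$. Hence
\[
 \rho(y,z)^2 \geq \rho(x,y)^2+\rho(x,z)^2 - 2\rho(x,y)\rho(x,z),
\]
which is exactly $\ip{\overrightarrow{xy}}{\overrightarrow{xz}}\leq \rho(x,y)\rho(x,z)$.

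For the lower bound, the triangle inequality gives $\rho(y,z)\leq \rho(x,y)+\rho(x,z)$. Squaring yields
\[
 \rho(y,z)^2\leq \rho(x,y)^2+\rho(x,z)^2+2\rho(x,y)\rho(x,z),
\]
that is, $\ip{\overrightarrow{xy}}{\overrightarrow{xz}}\geq -\rho(x,y)\rho(x,z)$.

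Combining the two inequalities gives the stated absolute-value bound. Every step is routine and there is no real obstacle. The only point needing care is to record the sign conventions correctly when collapsing the four-term definition, in particular that the term $\rho(x,x)$ vanishes.
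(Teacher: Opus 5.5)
Your proof is correct and follows the paper's argument exactly. You expand the quasilinearization to $\tfrac{1}{2}\bigl(\rho(x,y)^2+\rho(x,z)^2-\rho(y,z)^2\bigr)$, then get the upper bound from the reverse triangle inequality $\rho(y,z)\geq\abs{\rho(x,y)-\rho(x,z)}$ and the lower bound from $\rho(y,z)\leq\rho(x,y)+\rho(x,z)$.
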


\begin{proof}
 If $x,y,z\in X$, then the triangle inequality 
 implies that 
 \begin{align*}
  \begin{split}
  2\ip{\overrightarrow{xy}}{\overrightarrow{xz}} 
  &= \rho(x,z)^2 + \rho(y,x)^2 - \rho(y,z)^2 \\
  &\leq  \rho(x,z)^2 + \rho(x,y)^2 - \bigl(\rho(x,y)-\rho(x,z)\bigr)^2 = 2\rho(x,y)\rho(x,z)
  \end{split}
 \end{align*}
and 
 \begin{align*}
  \begin{split}
  2\ip{\overrightarrow{xy}}{\overrightarrow{xz}} 
  &= \rho(x,z)^2 + \rho(y,x)^2 - \rho(y,z)^2 \\
  &\geq  \rho(x,z)^2 + \rho(y,x)^2 - \bigl(\rho(y,x)+\rho(x,z)\bigr)^2 = -2\rho(x,y)\rho(x,z). 
  \end{split}
 \end{align*}
 Thus the result follows. 
\end{proof}

Let $(X, \rho)$ be a metric space, $p$ an element in $X$, 
and $l$ a nonnegative real number. 
Then a mapping $\gamma \colon [0,l]\to X$ is said to be 
a geodesic from $p$ if $\gamma(0)=p$ and 
\[
 \rho(\gamma(s), \gamma(t)) 
 = \abs{s-t}
\]
for all $s,t\in [0,l]$. 
Such mapping $\gamma$ is also called 
a geodesic from $p$ to $q$, where $q=\gamma (l)$. 
In this case, the mapping $\gamma$ 
is called a geodesic from $p$ to $q$. 
The mapping $\gamma$ is called nonzero if $p\neq q$. 

A metric space $(X, \rho)$ is said to be 
uniquely geodesic if for each $p,q\in X$, 
there exists a unique geodesic $\gamma$ from $p$ to $q$. 
The unique geodesic from $p$ to $q$ 
is also denoted by $\gamma_{p, q}$. 
We can define the convex combination between $p$ and $q$ by 
\[
 (1-\alpha) p \oplus \alpha q = \gamma_{p,q} \bigl(\alpha \rho(p, q)\bigr)
\]
for all $\alpha \in [0,1]$. 
The geodesic segment $[p, q]$ between $p$ and $q$ is defined by 
\[
 [p,q] = \bigl\{(1-\alpha) p \oplus \alpha q: 0\leq \alpha \leq 1\bigr\}. 
\] 

Let $(X, \rho)$ be a metric space. 
A mapping $T\colon X\to X$ is said to be 
\begin{itemize}
 \item nonexpansive if $\rho(Tx, Ty) \leq \rho(x,y)$ for all $x,y\in X$; 
 \item quasinonexpansive if $\Fix (T)$ is nonempty and 
 $\rho(Tx, y) \leq \rho(x,y)$ for all $x\in X$ and $y\in \Fix (T)$; 
 \item firmly metrically nonspreading if 
 \[
  \rho(Tx,Ty)^2 \leq \ip{\overrightarrow{(Tx)(Ty)}}
 {\overrightarrow{xy}} 
 \]
% or equivalently
% \[
%  2\rho(Tx, Ty)^2 + \rho(Tx, x)^2 + \rho(Ty, y)^2 
%  \leq \rho(Tx, y)^2 + \rho(Ty, x)^2 
% \]
 for all $x,y\in X$; 
 \item metrically nonspreading if 
 \[
  2\rho(Tx, Ty)^2 \leq \rho(Tx, y)^2 + \rho(Ty, x)^2 
 \]
 for all $x,y\in X$. 
\end{itemize}
It is obvious that $T$ is firmly metrically nonspreading 
if and only if 
\[
  2\rho(Tx, Ty)^2 + \rho(Tx, x)^2 + \rho(Ty, y)^2 
  \leq \rho(Tx, y)^2 + \rho(Ty, x)^2 
\]
for all $x,y\in X$. 
This implies that every firmly spherically 
nonspreading mapping is metrically nonspreading. 
If $T$ is a firmly metrically nonspreading and 
$\Fix(T)$ is nonempty, then 
\begin{align}\label{eq:quasi-fmn}
 \rho(Tx, y)^2 + \rho(Tx, x)^2 \leq \rho(x, y)^2 
\end{align}
for all $x\in X$ and $y\in \Fix(T)$. 
Firmly metrically nonspreading mappings are also called 
firmly nonexpansive mappings in~\cite{MR3679017}. 
See also~\cite{MR4021035} on some results for 
metrically nonspreading mappings. 
A mapping $T$ of a uniquely geodesic metric space 
$(X, \rho)$ into itself is said to be 
firmly nonexpansive~\cite{MR3206460} if 
 \[
  \rho(Tx, Ty) \leq 
  \rho\bigl((1-\alpha) Tx \oplus \alpha x, (1-\alpha) Ty \oplus \alpha y\bigr)
 \]
 for all $\alpha \in [0,1]$ and $x,y\in X$. 

A metric space $(X, \rho)$ is said to be a $\CAT(0)$ space 
if it is uniquely geodesic and 
\[
 \rho 
 \bigl(
 (1-\alpha)x\oplus \alpha y, 
 (1-\beta)x\oplus \beta z
 \bigr) 
 \leq 
 \abs{
 (1-\alpha)\bar{x}+ \alpha \bar{y} 
 - \bigl(
 (1-\beta)\bar{x}+ \beta \bar{z}
 \bigr)
 }
\]
whenever $\alpha, \beta\in [0,1]$, 
$x,y,z\in X$, $\bar{x}, \bar{y}, \bar{z}\in \R^2$, 
\[
 \rho(x,y) = \abs{\bar{x}-\bar{y}}, \quad 
 \rho(y,z) = \abs{\bar{y}-\bar{z}}, \quad 
 \textrm{and} \quad 
 \rho(z,x) = \abs{\bar{z}-\bar{x}}. 
\]
The sets $\Delta$ and $\overline{\Delta}$ defined by 
\[
 \Delta=[x,y]\cup[y,z]\cup[z,x] 
 \quad \textrm{and} \quad 
 \overline{\Delta}=[\bar{x},\bar{y}]\cup[\bar{y},\bar{z}]\cup[\bar{z},\bar{x}]
\]
are called a geodesic triangle with vertices $x,y,z$ and 
a comparison triangle for $\Delta$ in $\R^2$. 
Every complete $\CAT(0)$ space is called an Hadamard space. 
Typical examples of Hadamard spaces are 
nonempty closed convex subsets of real Hilbert spaces, 
open unit balls of complex Hilbert spaces with hyperbolic metrics, 
complete $\R$-trees, 
and simply connected complete Riemannian manifolds with 
nonpositive sectional curvature; 
see~\cites{MR3241330, MR1744486, MR1835418} for more details 
on $\CAT(0)$ spaces. 
If $(X, \rho)$ is a $\CAT(0)$ space, then 
\begin{align}\label{eq:conv-d-CAT0}
 \rho\bigl((1-\alpha)x\oplus \alpha y, z\bigr)
 \leq (1-\alpha) \rho(x, z) + \alpha \rho(y, z) 
\end{align}
and 
\begin{align}\label{eq:CN-CAT0}
 \rho\bigl((1-\alpha)x\oplus \alpha y, z\bigr)^2 
 \leq (1-\alpha) \rho(x, z)^2 + \alpha \rho(y, z)^2 
 -\alpha(1-\alpha)\rho(x,y)^2 
\end{align}
for all $x,y,z\in X$, and $\alpha \in [0,1]$. 

The following Cauchy--Schwarz inequality holds 
in $\CAT(0)$ spaces. 

\begin{lemma}[\cite{MR2390077}; see also~\cite{MR3241330}]
 \label{lem:CS-inequality}
 Let $(X, \rho)$ be a $\CAT(0)$ space. Then 
 \[
  \abs{\ip{\overrightarrow{xy}}{\overrightarrow{zw}}} 
  \leq \rho(x,y)\rho(z,w)
 \]
 for all $\overrightarrow{xy}, \overrightarrow{zw} \in X^2$. 
\end{lemma}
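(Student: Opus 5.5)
The plan is to reduce the inequality to a four-point comparison statement. The known fact that fits is the following: in a $\CAT(0)$ space every four points $x,y,z,w$ satisfy
\[
 \rho(x,z)^2+\rho(y,w)^2 \leq \rho(x,y)^2+\rho(y,z)^2+\rho(z,w)^2+\rho(w,x)^2 + \ldots
\]
This inequality does not hold as written, however. What we actually need is the ``Cauchy--Schwarz'' form. By the definition of the quasilinearization, that form is equivalent to
\[
 \rho(x,w)^2+\rho(y,z)^2-\rho(x,z)^2-\rho(y,w)^2 \leq 2\rho(x,y)\rho(z,w)
\]
together with the same inequality after interchanging $z$ and $w$. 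The interchange flips the sign of $\ip{\overrightarrow{xy}}{\overrightarrow{zw}}$, so it suffices to prove the upper bound for all quadruples.

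I would prove this upper bound through a subembedding argument. Reshetnyak's quadrilateral theorem says that four points of a $\CAT(0)$ space admit a majorizing convex quadrilateral in $\R^2$. I want to avoid citing that theorem and instead build the comparison by hand from the $\CAT(0)$ definition, using Lemma~\ref{lem:Alex}.

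Concretely, I would draw the diagonal geodesic $[x,z]$ and form the comparison triangles $\overline{\Delta}(x,y,z)$ and $\overline{\Delta}(x,z,w)$ in $\R^2$. I place them on opposite sides of the common edge $[\bar x,\bar z]$. Two cases then arise.

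\textbf{Case 1: the resulting quadrilateral $\bar x\bar y\bar z\bar w$ is convex.} The segment $[\bar y,\bar w]$ meets $[\bar x,\bar z]$ at some point $\bar m$. Let $m$ be the corresponding point on $[x,z]$. The comparison inequality for the two triangles gives $\rho(y,m)\leq\abs{\bar y-\bar m}$ and $\rho(m,w)\leq\abs{\bar m-\bar w}$. Hence $\rho(y,w)\leq\abs{\bar y-\bar w}$, while the other five distances are preserved exactly. The Euclidean identity
\[
 \ip{\bar x-\bar y}{\bar z-\bar w}\leq\abs{\bar x-\bar y}\abs{\bar z-\bar w}
\]
then transfers to the desired inequality. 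The point is that decreasing $\rho(y,w)$ only decreases the left side, since that side contains $-\rho(y,w)^2$.

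\textbf{Case 2: the quadrilateral is not convex.} Here the angle sum at $\bar x$ or at $\bar z$ is at least $\pi$. Lemma~\ref{lem:Alex} is designed for exactly this situation. I would use it to show that straightening at that vertex gives a triangle $\bar y\bar x\bar w$, say, with the vertex $\bar x$ lying on or beyond the segment. The lemma's conclusion $\abs{x-y}+\abs{y-z}\geq\abs{x}+\abs{z}$ lets us replace the quadrilateral with one having the same side lengths and $\abs{\bar y-\bar w}\geq\rho(y,w)$. In that configuration I would also arrange that the diagonal $\abs{\bar x-\bar z}$ does not shrink: either $\rho(x,z)$ is reproduced exactly, or the required quantity moves in the favorable direction. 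Alternatively, one can reduce to Case 1 with the other diagonal $[y,w]$, since at least one of the two constructions yields a convex quadrilateral.

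The main obstacle is this nonconvex case. We must check that the signs of all the modified distances move in the direction favorable to
\[
 \rho(x,w)^2+\rho(y,z)^2-\rho(x,z)^2-\rho(y,w)^2.
\]
The first thing I would try is the symmetric swap: draw the diagonal $[y,w]$ instead and argue that one of the two diagonal choices always yields a convex comparison quadrilateral. If that fails, I would invoke Lemma~\ref{lem:Alex} to handle the degenerate angle configuration directly.
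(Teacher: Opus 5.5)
Your Case~1 transfers the inequality in the wrong direction. Gluing along $[\bar x,\bar z]$ leaves the other diagonal as the only unmatched distance, and you correctly get $\rho(y,w)\le\abs{\bar y-\bar w}$. But $\rho(y,w)^2$ enters your target $\rho(x,w)^2+\rho(y,z)^2-\rho(x,z)^2-\rho(y,w)^2$ with a minus sign, so shrinking it \emph{raises} the left side. What the comparison actually yields is a lower bound:
\[
 2\ip{\overrightarrow{xy}}{\overrightarrow{zw}}
 \ge 2\ip{\bar x-\bar y}{\bar z-\bar w}
 \ge -2\rho(x,y)\rho(z,w).
\]
Your Case~2 has the same defect: ``$\abs{\bar x-\bar z}$ does not shrink'' helps only when $\rho(x,z)^2$ enters with a plus sign. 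A glued comparison configuration bounds the diagonals from above. It can therefore only prove the version of the inequality in which both squared diagonals carry a plus sign. The repair is cheap. By your own $z\leftrightarrow w$ remark, it is enough to prove, for all quadruples,
\[
 \rho(x,z)^2+\rho(y,w)^2-\rho(x,w)^2-\rho(y,z)^2\le 2\rho(x,y)\rho(z,w),
\]
that is, $\ip{\overrightarrow{xy}}{\overrightarrow{zw}}\ge-\rho(x,y)\rho(z,w)$. For this target your Case~1 computation goes through unchanged, ending with $2\ip{\bar x-\bar y}{\bar w-\bar z}\le 2\abs{\bar x-\bar y}\abs{\bar z-\bar w}$.

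Case~2 is left open, and neither of your fallbacks works as stated. It is not true that one of the two diagonal constructions is always convex. Already in $\R^2$, take $x=(0,0)$, $y=(-1,1)$, $z=(1,0)$, $w=(-1/2,3/5)$; then $w$ lies inside the triangle $xyz$. Gluing along $[\bar y,\bar w]$ reproduces this nonconvex configuration, while gluing along $[\bar x,\bar z]$ produces an angle sum of about $265^\circ$ at $\bar x$. Nor does Lemma~\ref{lem:Alex} control the straightened diagonal. It only compares lengths of broken paths, whereas ``the diagonal does not shrink'' is a separate part of Alexandrov's lemma.

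With the corrected target, Case~2 needs no such tool. Suppose the angles $\alpha,\alpha'$ at $\bar x$ of the comparison triangles for $(x,y,z)$ and $(x,z,w)$ satisfy $\alpha+\alpha'\ge\pi$. Set $a=\rho(x,y)$, $b=\rho(x,w)$, $e=\rho(z,w)$, $f=\rho(x,z)$. Use $\rho(y,w)\le a+b$, the law of cosines in both triangles, and $\cos\alpha\le-\cos\alpha'$:
\[
 \rho(x,z)^2+\rho(y,w)^2-\rho(x,w)^2-\rho(y,z)^2
 \le 2af\cos\alpha+2ab
 \le 2a(b-f\cos\alpha')
 \le 2a\sqrt{b^2+f^2-2bf\cos\alpha'}=2ae.
\]
The case of angle sum at least $\pi$ at $\bar z$ is symmetric, and degenerate quadruples with coincident points are immediate. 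Only Case~1 uses the $\CAT(0)$ inequality, so you need neither Reshetnyak's theorem nor the full Alexandrov lemma. (The paper itself gives no proof of this lemma, only a citation. The corrected argument is essentially the standard four-point subembedding route.)
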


\begin{remark}\label{rem:fmn-fn}
It follows from Lemma~\ref{lem:CS-inequality} that 
if $(X, \rho)$ is a $\CAT(0)$ space and 
$T\colon X\to X$ is firmly metrically nonspreading, then $T$ is nonexpansive.  
\end{remark}

Let $(X, \rho)$ be a $\CAT(0)$ space and 
$\{x_n\}$ a sequence in $X$. The asymptotic center 
$\AC\bigl(\{x_n\}\bigr)$ of $\{x_n\}$ is defined by 
\[
 \AC\bigl(\{x_n\}\bigr) 
 = \left\{z\in X: 
 \limsup_{n\to \infty}\rho(z, x_n)
 \leq \limsup_{n\to \infty}\rho(y, x_n) 
 \quad (\forall y\in X)\right\}. 
\]
It is easy to prove that if $\AC\bigl(\{x_n\}\bigr)$ is a  
singleton, then $\{x_n\}$ is bounded. 
In fact, if $X$ consists of one point, then 
$\{x_n\}$ is obviously bounded. 
Suppose that $X$ is not a singleton and 
let $\AC\bigl(\{x_n\}\bigr)=\{p\}$ for 
some point $p$ in $X$. 
Take an element $q$ in $X$ such that $p\neq q$. 
Then we have 
\[
 \limsup_{n\to \infty}\rho(p, x_n)
 < \limsup_{n\to \infty}\rho(q, x_n). 
\]
This implies that $\{x_n\}$ is bounded. 
A sequence $\{x_n\}$ in $X$ is said to be $\Delta$-convergent 
to $p\in X$ if 
\[
 \AC\bigl(\left\{x_{n_i}\right\}\bigr) = \{p\}
\]
for each subsequence $\{x_{n_i}\}$ of $\{x_n\}$. 
In this case, $\{x_n\}$ is bounded. 
The following lemma is of fundamental importance. 
\begin{lemma}[\cites{MR3241330, MR2416076}]\label{lem:Delta-conv-subseq-CAT0}
 Let $(X, \rho)$ be an Hadamard space and 
 $\{x_n\}$ a bounded sequence in $X$. Then 
 $\AC\bigl(\{x_n\}\bigr)$ is a singleton 
 and there exists a subsequence $\{x_{n_i}\}$ of 
 $\{x_n\}$ which is $\Delta$-convergent to some point $p$ 
 in $X$. 
\end{lemma}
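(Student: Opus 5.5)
The statement has two parts. First, for a bounded sequence $\{x_n\}$ in an Hadamard space the asymptotic center is a singleton. Second, some subsequence is $\Delta$-convergent. I will prove them in that order, using only completeness and inequality~\eqref{eq:CN-CAT0}.

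\emph{Step 1: the asymptotic center is a singleton.} Define $r(y)=\limsup_{n}\rho(y,x_n)$ for $y\in X$. By boundedness $r$ is finite, and since $\abs{r(y)-r(y')}\leq \rho(y,y')$ it is $1$-Lipschitz. Put $r_0=\inf_X r$. Let $u,v\in X$ and let $m=\frac12 u\oplus\frac12 v$ be their midpoint. Applying~\eqref{eq:CN-CAT0} with $z=x_n$ gives
\[
 \rho(m,x_n)^2\leq \tfrac12\rho(u,x_n)^2+\tfrac12\rho(v,x_n)^2-\tfrac14\rho(u,v)^2 .
\]
Taking $\limsup_n$ yields
\[
 r_0^2\leq r(m)^2\leq \tfrac12 r(u)^2+\tfrac12 r(v)^2-\tfrac14\rho(u,v)^2 .
\]
Now choose $y_k$ with $r(y_k)^2<r_0^2+1/k$. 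The previous inequality with $u=y_k$, $v=y_l$ gives
\[
 \rho(y_k,y_l)^2\leq 2\left(\tfrac1k+\tfrac1l\right),
\]
so $\{y_k\}$ is Cauchy. By completeness it converges to some $p$, and continuity of $r$ gives $r(p)=r_0$, so $p\in\AC(\{x_n\})$. For uniqueness, if $r(u)=r(v)=r_0$, the same inequality gives $\rho(u,v)^2\leq 0$, so $u=v$. This step is routine.

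\emph{Step 2: a $\Delta$-convergent subsequence.} This is the main obstacle, because a bare diagonal argument does not control asymptotic centers of further subsequences. I will use the standard ``regular subsequence'' argument. For a bounded sequence $\{u_n\}$, let $r(\{u_n\})=\inf_y\limsup_n\rho(y,u_n)$ be its asymptotic radius. Put
\[
 \omega(\{u_n\})=\inf\bigl\{r(\{v_n\}):\{v_n\}\text{ a subsequence of }\{u_n\}\bigr\}.
\]
Passing to a subsequence can only decrease $r$ and can only increase $\omega$. Construct nested subsequences $\{x^{(k+1)}_n\}\subset\{x^{(k)}_n\}$, with $\{x^{(1)}_n\}=\{x_n\}$, such that
\[
 r\bigl(\{x^{(k+1)}_n\}\bigr)<\omega\bigl(\{x^{(k)}_n\}\bigr)+1/k .
\]
Let $\{v_n\}$ be the diagonal sequence, $v_n=x^{(n)}_n$. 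Then $\{v_n\}$ is a subsequence of each $\{x^{(k)}_n\}$ up to finitely many terms, which affects neither $r$ nor $\omega$. Comparing the monotone quantities, one checks that $r(\{w_n\})=r(\{v_n\})$ for every subsequence $\{w_n\}$ of $\{v_n\}$; that is, $\{v_n\}$ is \emph{regular}.

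\emph{Step 3: every subsequence has the same center.} Let $\AC(\{v_n\})=\{p\}$ and let $\{w_n\}$ be a subsequence of $\{v_n\}$ with $\AC(\{w_n\})=\{q\}$; both centers are singletons by Step 1. Suppose $p\neq q$. Let $R$ be the common radius, and write $r_v$, $r_w$ for the limsup functions of $\{v_n\}$ and $\{w_n\}$. Then $r_w(p)\leq r_v(p)=R=r_w(q)$, so $p$ also minimizes $r_w$, and uniqueness in Step 1 forces $p=q$, a contradiction. Hence every subsequence of $\{v_n\}$ has asymptotic center $\{p\}$, which by definition means $\{v_n\}$ is $\Delta$-convergent to $p$.
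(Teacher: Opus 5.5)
Your proof is correct. The paper gives no proof of this lemma and only cites Ba{\v{c}}{\'a}k and Kirk--Panyanak, and your argument is the standard one behind those citations: uniqueness and existence of the asymptotic center from \eqref{eq:CN-CAT0} and completeness, extraction of a regular subsequence by the diagonal radius argument, and the comparison $r_w(p)\leq r_v(p)=R=r_w(q)$ to force every subsequence to have the same center.
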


For a bounded sequence $\{x_n\}$ in a $\CAT(0)$ space, 
we denote by $\omega_{\Delta}\bigl(\{x_n\}\bigr)$ 
the set of all $p\in X$ such that 
$\{x_{n_i}\}$ is $\Delta$-convergent to $p$ 
for some subsequence $\{x_{n_i}\}$ of $\{x_n\}$. 
If $X$ is an Hadamard space, then Lemma~\ref{lem:Delta-conv-subseq-CAT0} 
implies that $\omega_{\Delta}\bigl(\{x_n\}\bigr)$ is nonempty. 

\begin{lemma}[\cite{MR3574140}]\label{lem:Delta-conv-CAT0}
 Let $(X, \rho)$ be an Hadamard space, 
 $\{x_n\}$ a bounded sequence in $X$, 
 and $\{\rho(z, x_{n})\}$ is convergent 
 for all $z\in \omega_{\Delta}\bigl(\{x_n\}\bigr)$. 
 Then $\{x_n\}$ is $\Delta$-convergent to a point in $X$. 
\end{lemma}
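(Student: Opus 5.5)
We will show that $\{x_n\}$ has exactly one $\Delta$-cluster point and then use that point to identify the $\Delta$-limit. Since $\{x_n\}$ is bounded, Lemma~\ref{lem:Delta-conv-subseq-CAT0} shows that $\omega_{\Delta}\bigl(\{x_n\}\bigr)$ is nonempty. The argument then has two steps. First, we prove that $\omega_{\Delta}\bigl(\{x_n\}\bigr)$ is a singleton $\{p\}$, using the convergence of $\{\rho(z,x_n)\}$ for every $z$ in this set. Second, we prove that every subsequence $\{x_{n_i}\}$ satisfies $\AC\bigl(\{x_{n_i}\}\bigr)=\{p\}$.

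For the first step, take $p,q\in \omega_{\Delta}\bigl(\{x_n\}\bigr)$. Choose subsequences $\{x_{n_i}\}$ and $\{x_{m_j}\}$ that are $\Delta$-convergent to $p$ and $q$, respectively. By the definition of $\Delta$-convergence, $\AC\bigl(\{x_{n_i}\}\bigr)=\{p\}$. Suppose $p\neq q$. Then
\[
 \limsup_{i}\rho(p,x_{n_i})<\limsup_{i}\rho(q,x_{n_i}),
\]
because uniqueness of the asymptotic center makes $q$ strictly worse than $p$. This point needs some care: membership in $\AC$ only gives $\leq$ for $q$, so strict inequality follows from the fact that $\AC$ is the singleton $\{p\}$ and $q\neq p$. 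The sequences $\{\rho(p,x_n)\}$ and $\{\rho(q,x_n)\}$ converge to limits $a_p$ and $a_q$, so the displayed inequality reads $a_p<a_q$. Applying the same argument to $\{x_{m_j}\}$ gives $a_q<a_p$, which is a contradiction. Hence $p=q$.

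For the second step, let $\{x_{n_i}\}$ be an arbitrary subsequence. It is bounded, so by Lemma~\ref{lem:Delta-conv-subseq-CAT0} its asymptotic center is a singleton $\{r\}$. The same lemma also gives a further subsequence $\{x_{n_{i_k}}\}$ that is $\Delta$-convergent to some point. That point lies in $\omega_{\Delta}\bigl(\{x_n\}\bigr)=\{p\}$, so it equals $p$. We now show $r=p$. Because $\{\rho(p,x_n)\}$ converges, the limsup of $\rho(p,\cdot)$ is the same along the subsequence $\{x_{n_{i_k}}\}$ and along $\{x_{n_i}\}$, namely $a_p$. Suppose $r\neq p$. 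Since $\AC\bigl(\{x_{n_{i_k}}\}\bigr)=\{p\}$, we have
\[
 a_p=\limsup_k \rho(p,x_{n_{i_k}})<\limsup_k \rho(r,x_{n_{i_k}})\le \limsup_i\rho(r,x_{n_i}).
\]
On the other hand, $r\in\AC\bigl(\{x_{n_i}\}\bigr)$ gives
\[
 \limsup_i\rho(r,x_{n_i})\le \limsup_i\rho(p,x_{n_i})=a_p .
\]
Together these give $a_p<a_p$, a contradiction. Therefore $r=p$, and so $\{x_n\}$ is $\Delta$-convergent to $p$.

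The main obstacle is the strict inequality furnished by uniqueness of the asymptotic center. Both steps rely on it, and both also depend on the limsup along a subsequence coinciding with the full limit. That coincidence holds only for points of $\omega_\Delta$, where the hypothesis guarantees that $\{\rho(z,x_n)\}$ converges.
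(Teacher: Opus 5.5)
Your proof is correct and is essentially the standard argument behind this lemma; the paper gives no proof of its own and quotes the result from Kimura--Kohsaka. You first get uniqueness of the $\Delta$-cluster point from the strict inequality $\limsup_i\rho(p,x_{n_i})<\limsup_i\rho(q,x_{n_i})$ for $q\neq p$ (otherwise $q$ would attain the asymptotic radius and lie in $\AC(\{x_{n_i}\})=\{p\}$), and then identify the asymptotic center of an arbitrary subsequence through a further $\Delta$-convergent subsequence, using the convergence of $\{\rho(z,x_n)\}$ for $z\in\omega_{\Delta}(\{x_n\})$ exactly where it is needed, namely to compare limsups of $\rho(z,\cdot)$ along different subsequences.
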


Let $(X, \rho)$ be a uniquely geodesic metric space. 
A subset $C$ of $X$ is said to be convex if 
\[
 (1-\alpha) x \oplus \alpha y \in C
\]
whenever $x,y\in C$ and $\alpha \in [0,1]$. 
It is known that if $(X, \rho)$ is a $\CAT(0)$ space and 
$T\colon X\to X$ is a quasinonexpansive mapping, then 
$\Fix(T)$ is closed and convex. 
If $C$ is a nonempty closed convex subset of an Hadamard space $X$ 
and $x\in X$, then there exists a unique point $\hat{x}\in C$ such that 
\[
 \rho(\hat{x}, x) = \min_{y\in C} \rho(y, x). 
\] 
The metric projection $P_C$ of $X$ onto $C$ is defined by 
$P_C(x)=\hat{x}$ for all $x\in X$. 

Let $(X, \rho)$ be a uniquely geodesic metric space 
and $f\colon X\to (-\infty, \infty]$ a function. 
The domain of $f$ is defined by 
\[
 \Dom f=\{x\in X: f(x)\in \R\}. 
\]
The function $f$ is said to be 
\begin{itemize}
 \item proper if $\Dom f$ is nonempty; 
 \item convex if 
 \[
  f\bigl((1-\alpha)x\oplus \alpha y\bigr) 
  \leq (1-\alpha) f(x) + \alpha f(y)
 \]
 whenever $x,y\in X$ and $\alpha\in (0,1)$; 
 \item lower semicontinuous if the set 
 $\{x\in X: f(x)\leq \lambda\}$ is closed for all $\lambda\in \R$. 
\end{itemize}
It follows from~\eqref{eq:conv-d-CAT0} that 
if $(X, \rho)$ is a $\CAT(0)$ space, then 
$\rho(\,\cdot\,, z)\colon X\to \R$ is convex for all $z\in X$. 

We know the following lemma. 
\begin{lemma}[\cites{MR3241330, MR1360608, MR1651416}]
\label{lem:prox-well-def}
 Let $(X, \rho)$ be an Hadamard space, 
 $f\colon X\to (-\infty, \infty]$ a proper lower semicontinuous 
 convex function, and $x\in X$. 
 Then there exists a unique $u\in X$ such that 
 \[
  f(u) + \frac{1}{2}\rho(u, x)^2 
  = \inf_{y\in X} 
  \left\{f(y) + \frac{1}{2}\rho(y, x)^2\right\}. 
 \]
\end{lemma}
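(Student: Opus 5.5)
The approach is the standard one: existence by a minimizing sequence, and uniqueness from the strong convexity of the objective. Set
\[
 g(y) = f(y) + \frac{1}{2}\rho(y, x)^2 \quad (y\in X)
\]
and $m=\inf g(X)$.

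\textbf{Step 1: $m>-\infty$.} Since $f$ is proper, $m<\infty$. To show $m>-\infty$, we need an affine-type lower bound for $f$. We fix $z\in\Dom f$ and suppose $f$ is unbounded below on some ball, or fails to be bounded below by $a-b\rho(y,z)$. Combining convexity along geodesics from $z$ with lower semicontinuity, the standard argument (as in~\cite{MR3241330}) gives constants $a,b$ with $f(y)\geq a-b\rho(y,z)$ for all $y\in X$. Sketch: if not, we get points $y_n$ with $f(y_n)<-n(1+\rho(y_n,z))$. Moving along $[z,y_n]$ to points $w_n$ at distance about $1$ from $z$, convexity forces $f(w_n)\to-\infty$. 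A weak (asymptotic-center) limit argument then contradicts lower semicontinuity, since sublevel sets are closed and convex, hence $\Delta$-closed. Once we have the bound $f(y)\geq a-b\rho(y,z)$, the quadratic term dominates, so $g$ is bounded below and coercive.

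\textbf{Step 2: a minimizing sequence is Cauchy.} Take $y_n$ with $g(y_n)\to m$. For the midpoint $w=\frac12 y_n\oplus\frac12 y_k$, convexity of $f$ together with \eqref{eq:CN-CAT0} applied with the point $x$ gives
\[
 m\leq g(w)\leq \frac12 g(y_n)+\frac12 g(y_k)-\frac18\rho(y_n,y_k)^2 .
\]
Hence $\rho(y_n,y_k)^2\leq 4\bigl(g(y_n)+g(y_k)-2m\bigr)\to0$, and the sequence is Cauchy.

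\textbf{Step 3: existence.} By completeness, $y_n\to u$. Lower semicontinuity of $f$ and continuity of $\rho(\cdot,x)^2$ then give $g(u)\leq\liminf g(y_n)=m$, so $u$ is a minimizer.

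\textbf{Step 4: uniqueness.} If $u,u'$ are both minimizers, the same midpoint inequality gives $m\leq m-\frac18\rho(u,u')^2$, so $u=u'$.

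The main obstacle is Step 1, the lower bound for $f$, because Hadamard spaces have no Hahn--Banach theorem. If the $\Delta$-closedness route turns out to be delicate, there is a more hands-on alternative for showing $m>-\infty$. Suppose $m=-\infty$. First show that $g$ is bounded below on bounded sets: on a closed ball $B$, a minimizing sequence for $g$ restricted to $B$ can be handled through $\Delta$-compactness (Lemma~\ref{lem:Delta-conv-subseq-CAT0}) and $\Delta$-lower semicontinuity of convex lsc functions. Then rescale along geodesics toward $z$ to reach a contradiction.
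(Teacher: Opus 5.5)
The paper does not prove this lemma but cites it from Ba\v{c}\'ak, Jost, and Mayer. Your argument is correct and is essentially the standard proof from that literature: a lower bound $f\geq a-b\rho(\cdot,z)$ obtained from $\Delta$-compactness (Lemma~\ref{lem:Delta-conv-subseq-CAT0}) and the $\Delta$-closedness of closed convex sublevel sets, followed by the midpoint inequality from convexity of $f$ and \eqref{eq:CN-CAT0}, which makes every minimizing sequence Cauchy and forces uniqueness. One phrasing fix in Step~1: the $\Delta$-limit $w$ of $\{w_{n_i}\}$ lies in every sublevel set $\{f\leq -k\}$, so the contradiction is with $f(w)>-\infty$, while lower semicontinuity is what makes those sets closed rather than what is contradicted.
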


Let $(X, \rho)$ and $f$ 
be the same as in Lemma~\ref{lem:prox-well-def}. 
Then we can define the proximal mapping 
$\Prox_f\colon X\to X$ by 
\[
 \Prox_{f}x = \Argmin_{y\in X} 
 \left\{f(y) + \frac{1}{2}\rho(y, x)^2\right\}
\]
for all $x\in X$. It is known that 
$\Prox_f$ is firmly metrically nonspreading and 
\[
 \Fix \bigl(\Prox_f\bigr) = \Argmin_X f
\]
holds; see~\cites{MR3047087, MR3241330, MR3574140, MR4021035} 
for more details. We also know the following theorem. 
\begin{theorem}[\cite{MR3574140}]\label{thm:min-g}
 Let $(X, \rho)$ be an Hadamard space, 
 $\{z_n\}$ a bounded sequence in $X$, 
 $\{\beta_n\}$ a sequence of positive real numbers such that 
 $\sum_{n=1}^{\infty}\beta_n=\infty$, 
 and $g$ the real function on $X$ defined by 
 \begin{align*}
  g(y) = \limsup_{n\to \infty} 
  \frac{1}{\sum_{l=1}^n \beta_l}
  \sum_{k=1}^n \beta_k d(y, z_k)^2 
 \end{align*}
 for all $y\in X$. Then 
 $g$ is a continuous and convex function 
 such that $\Argmin_X g$ is a singleton. 
\end{theorem}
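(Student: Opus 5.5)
Since $\{z_n\}$ is bounded, we fix $p\in X$ and $M>0$ with $\rho(p,z_n)\le M$ for all $n$. Put $\sigma_n=\sum_{l=1}^n\beta_l$ and $g_n(y)=\sigma_n^{-1}\sum_{k=1}^n\beta_k\rho(y,z_k)^2$. The proof has three steps: $g$ is real-valued and continuous, $g$ is convex, and $g$ has a unique minimizer.

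\emph{Real-valued and continuous.} For every $y\in X$ we have $0\le g_n(y)\le(\rho(y,p)+M)^2$, so $g(y)\in\R$. For $y,w\in X$ we use
\[
\abs{\rho(y,z_k)^2-\rho(w,z_k)^2}\le\rho(y,w)\bigl(\rho(y,p)+\rho(w,p)+2M\bigr),
\]
which holds uniformly in $k$. Averaging gives the same bound for $\abs{g_n(y)-g_n(w)}$. Taking $\limsup$ preserves it, so $g$ is locally Lipschitz and in particular continuous.

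\emph{Convexity.} Let $y_\alpha=(1-\alpha)y\oplus\alpha w$. By \eqref{eq:CN-CAT0},
\[
g_n(y_\alpha)\le(1-\alpha)g_n(y)+\alpha g_n(w)-\alpha(1-\alpha)\rho(y,w)^2 .
\]
Since the limsup of a sum is at most the sum of the limsups, this yields
\[
g(y_\alpha)\le(1-\alpha)g(y)+\alpha g(w)-\alpha(1-\alpha)\rho(y,w)^2 .
\]
So $g$ is in fact strongly convex.

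\emph{Unique minimizer.} Uniqueness follows from strong convexity. If $y\ne w$ both minimize $g$ with value $m$, then the midpoint satisfies
\[
g\bigl(\tfrac12 y\oplus\tfrac12 w\bigr)\le m-\tfrac14\rho(y,w)^2<m,
\]
which is a contradiction.

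For existence, let $m=\inf g(X)\ge0$ and take a minimizing sequence $\{y_j\}$. Applying the strong convexity inequality at the midpoint of $y_i,y_j$ gives
\[
m\le\tfrac12 g(y_i)+\tfrac12 g(y_j)-\tfrac14\rho(y_i,y_j)^2 .
\]
Hence $\rho(y_i,y_j)^2\le 2\bigl(g(y_i)+g(y_j)\bigr)-4m\to0$, so $\{y_j\}$ is Cauchy. By completeness it converges to some $u$, and continuity of $g$ gives $g(u)=m$. Thus $\Argmin_X g=\{u\}$.

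There is no real obstacle. The only point needing care is that taking $\limsup$ is subadditive but not additive. This is harmless because every inequality we pass through the limsup is an upper bound on $g_n$ at a single point by a combination of $g_n$ at other points. The hypothesis $\sum_{n=1}^\infty\beta_n=\infty$ is not even needed for this statement.
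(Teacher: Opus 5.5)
Your proof is correct. The Lipschitz-type bound survives the $\limsup$ because $C=\rho(y,p)+\rho(w,p)+2M$ is symmetric in $y,w$. Averaging \eqref{eq:CN-CAT0} over $k$ and passing it through the subadditive, positively homogeneous $\limsup$ gives the strong convexity estimate for $g$ itself. The midpoint estimate $\rho(y_i,y_j)^2\le 2\bigl(g(y_i)+g(y_j)\bigr)-4m$ then makes every minimizing sequence Cauchy, which gives existence and uniqueness at once.

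The paper contains no proof of this statement to compare against; it imports the result from the cited work of Kimura and Kohsaka. Your route is the standard self-contained one. It is the same mechanism that shows asymptotic centers in Hadamard spaces are singletons.

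An alternative would obtain existence from three facts:
\begin{itemize}
\item the coercivity $g(y)\ge\bigl(\rho(y,p)-M\bigr)^2$ whenever $\rho(y,p)\ge M$;
\item $\Delta$-compactness of bounded sets (Lemma~\ref{lem:Delta-conv-subseq-CAT0});
\item $\Delta$-lower semicontinuity of continuous convex functions.
\end{itemize}
Uniqueness would then come from strict convexity. Your Cauchy argument is more elementary, because it exploits the uniform modulus $\alpha(1-\alpha)\rho(y,w)^2$ and never touches $\Delta$-convergence.

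Your remark that $\sum_{n}\beta_n=\infty$ is superfluous here is also right. That hypothesis is needed only later, in Theorems~\ref{thm:PPA-CAT0} and~\ref{thm:MPPA-CAT0-Delta}, to make the telescoped error terms such as $\mu\rho(J_{\mu}p,x_1)^2/\sigma_n$ vanish.
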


\section{Tangent spaces}

We next propose a concept of a tangent space 
of a $\CAT(0)$ space. 
This concept is slightly different 
from the one defined in~\cite{MR4292325}. 

Let $(X,\rho)$ be a $\CAT(0)$ space and $p$ an element in $X$. 
We denote by $\Gamma_p$ the set of all geodesics 
from $p$. If $\gamma_{p,x}, \gamma_{p,y}\in \Gamma_p$, then 
the comparison angle $\overline{\angle}_p(\gamma_{p,x}, \gamma_{p,y})$ 
between $\gamma_{p,x}$ and $\gamma_{p,y}$ 
is defined by 
\begin{align*}
 \overline{\angle}_p(\gamma_{p,x}, \gamma_{p,y}) =
 \begin{cases}
 \arccos
  \frac{\ip{\overline{x}-\overline{p}}{\overline{y}-\overline{p}}}
 {\abs{\overline{x}-\overline{p}}{\abs{\overline{y}-\overline{p}}}}
  & (\textrm{$p \neq x$ and 
 $p \neq y$}); \\
 \pi/2 & (\textrm{either $p=x$ or 
 $p=y$}); \\
 0 & (\textrm{$p=x$ and $p=y$}), 
 \end{cases}
\end{align*}
where $\overline{p}$, $\overline{x}$, and $\overline{y}$ 
are points in $\R^2$ satisfying 
\[
 \rho(p, x) 
 = \abs{\overline{p}-\overline{x}}, \quad 
 \rho(x, y) 
 = \abs{\overline{x}-\overline{y}}, \quad \textrm{and} \quad 
 \rho(y, p) = \abs{\overline{y}-\overline{p}}. 
\]
It is obvious that $\overline{\angle}_p(\gamma_{p,x}, \gamma_{p,y})\in [0,\pi]$. 
The comparison angle $\overline{\angle}_p(\gamma_{p,x}, \gamma_{p,y})$ 
is also denoted by $\overline{\angle}_p(x,y)$. 
If $\gamma_{p, x}$ and $\gamma_{p, y}$ are nonzero, then 
\begin{align}\label{eq:character-comparison}
 \overline{\angle}_p(\gamma_{p,x}, \gamma_{p,y}) 
 = \arccos 
 \frac{\rho(p, x)^2 + \rho(p, y)^2 - \rho(x,y)^2}{2\rho(p, x)\rho(p, y)}.  
\end{align}
Since $(X, \rho)$ is a $\CAT(0)$ space, we have 
\[
 \overline{\angle}_p\bigl(\gamma_{p,x}(s_1), \gamma_{p,y}(t_1)\bigr) 
 \leq 
 \overline{\angle}_p\bigl(\gamma_{p,x}(s_2), \gamma_{p,y}(t_2)\bigr) 
\]
whenever 
\[
 0<s_1\leq s_2\leq \rho(p,x)
 \quad \textrm{and} \quad 
 0<t_1\leq t_2\leq \rho(p,y). 
\]
The Alexandrov angle $\angle_p(\gamma_{p,x}, \gamma_{p,y})$ 
between $\gamma_{p,x}$ and $\gamma_{p,y}$ is defined by 
\[
 \angle_p(\gamma_{p,x}, \gamma_{p,y}) =
 \begin{cases}
 \lim_{s, t\rightarrow +0} 
 \overline{\angle}_p\left(\gamma_{p,x}(s), \gamma_{p,y}(t)\right) 
 & (\textrm{$p\neq x$ and $p\neq y$}); \\
 \pi/2 & (\textrm{either $p=x$ or 
 $p=y$}); \\
 0 & (\textrm{$p=x$ and $p=y$}). 
 \end{cases}
\]
This angle is also denote by $\angle_p(x, y)$. 
If $p\neq x$ and $p\neq y$, then we have 
\begin{align}
 \begin{split}\label{eq:ineq-Alex}
   \angle_p(\gamma_{p,x}, \gamma_{p,y}) 
 &=\inf  
 \bigl\{\overline{\angle}_p\left(\gamma_{p,x}(s), \gamma_{p,y}(t)\right): 
 s\in (0,\rho(p,x)], \, t\in (0,\rho(p,y)]\bigr\} \\
 &=\inf 
 \bigl\{\overline{\angle}_p\left(\gamma_{p,x}(t), \gamma_{p,y}(t)\right): 
 t\in (0,\min \{\rho(p,x), \rho(p,y)\}]\bigr\} \\
 &=\lim_{t\rightarrow +0} 
 \overline{\angle}_p\left(\gamma_{p,x}(t), \gamma_{p,y}(t)\right). 
 \end{split}
\end{align}
It is known that $\angle_p$ is a pseudometric on $\Gamma_p$, that is, 
\begin{itemize}
 \item $\angle_p(\gamma, \eta)\geq 0$ and $\angle_p(\gamma, \gamma)=0$; 
 \item $\angle_p(\gamma, \eta)=\angle_p(\eta, \gamma)$; 
 \item $\angle_p(\gamma, \eta)\leq \angle_p(\gamma, \zeta)+\angle_p(\zeta, \eta)$
\end{itemize}
for all $\gamma, \eta, \zeta \in \Gamma_p$. 
In this case, we also have 
\[
 \angle_p(\gamma, \eta)\in [0,\pi]
\]
for all $\gamma, \eta \in \Gamma_p$. 

If $X$ is a nonempty convex subset of an real inner product space $H$, 
$p$ an element in $X$, and both $x$ and $y$ elements in 
$X\setminus \{p\}$. Then we have 
\begin{align}\label{eq:Comparison-H}
 \overline{\angle}_p(\gamma_{p, x}, \gamma_{p, y})
 = \arccos 
 \ip{\frac{x-p}{\norm{x-p}}}{\frac{y-p}{\norm{y-p}}}
\end{align}
and 
\begin{align}\label{eq:Alexandrov-Comparison-H}
 \angle_{p}(\gamma_{p, x}, \gamma_{p, y})
 =\overline{\angle}_p(\gamma_{p, x}, \gamma_{p, y}). 
\end{align}
In fact, it follows from~\eqref{eq:character-comparison} that 
\begin{align*}
 \overline{\angle}_p (\gamma_{p, x}, \gamma_{p, y}) 
 &= \arccos \frac{\norm{x-p}^2 + \norm{y-p}^2 - \norm{x-y}^2}
 {2\norm{x-p}\norm{y-p}} \\ 
 &= \arccos \frac{\ip{x-p}{y-p}}
 {\norm{x-p}\norm{y-p}} 
 = \arccos \ip{\frac{x-p}{\norm{x-p}}}
 {\frac{y-p}{\norm{y-p}}}. 
\end{align*}
Thus we obtain~\eqref{eq:Comparison-H}. 
In this case, the geodesics $\gamma_{p, x}$ and $\gamma_{p, y}$ 
are given by 
\[
 \gamma_{p, x}(s) = p + s\frac{x-p}{\norm{x-p}} 
 \quad \textrm{and} \quad
 \gamma_{p, y}(t) = p + t\frac{y-p}{\norm{y-p}} 
\]
for all $s\in [0,\norm{x-p}]$ and $t\in [0,\norm{y-p}]$. 
Let $s\in (0, \norm{x-p}]$ and $t\in (0,\norm{y-p}]$ be given. 
Setting 
\[
 \alpha = \frac{s}{\norm{x-p}} 
 \quad \textrm{and} \quad 
 \beta = \frac{t}{\norm{y-p}}, 
\]
we have from~\eqref{eq:Comparison-H} that 
\begin{align*}
 \overline{\angle}_p\bigl(\gamma_{p, x}(s), \gamma_{p, y}(t)\bigr) 
 &=\overline{\angle}_p
  \bigl((1-\alpha)p + \alpha x, (1-\beta)p + \beta y\bigr) \\
 &= \arccos 
 \ip{
 \frac{(1-\alpha)p + \alpha x -p}{\norm{(1-\alpha)p + \alpha x -p}}
 }
 {
 \frac{(1-\beta)p + \beta y -p}{\norm{(1-\beta)p + \beta y -p}}
 } \\
 &= \arccos 
 \ip{
 \frac{\alpha (x-p)}{\norm{\alpha (x-p)}}
 }
 {
 \frac{\beta (y-p)}{\norm{\beta (y-p)}}
 } \\
 &= \arccos 
 \ip{
 \frac{x-p}{\norm{x-p}}
 }
 {
 \frac{y-p}{\norm{y-p}}
 } = \overline{\angle}_p(x,y). 
\end{align*}
Consequently, we have 
\[
 \overline{\angle}_p\bigl(\gamma_{p, x}(s), \gamma_{p, y}(t)\bigr) 
 =\overline{\angle}_p(x,y). 
\]
for all $s\in (0,\norm{x-p}]$ and $t\in (0,\norm{y-p}]$ 
and hence 
\[
 \angle_p(x, y)
 = \lim_{s, t\to +0} 
  \overline{\angle}_p\bigl(\gamma_{p, x}(s), \gamma_{p, y}(t)\bigr) 
 = \overline{\angle}_p(x,y). 
\]
Thus we obtain~\eqref{eq:Alexandrov-Comparison-H}. 

Let $(X, \rho)$ be a $\CAT(0)$ space, 
$p$ an element in $X$, $Y$ the set defined by 
\begin{align}\label{eq:def-Y}
 Y=[0,\infty)\times \Gamma_p, 
\end{align}
and $\tilde{d}_p$ the real function on $Y\times Y$ 
defined by 
\begin{align}
 \begin{split}\label{eq:def-d_p}
 &\tilde{d}_p\bigl((\alpha, \gamma_{p,x}), (\beta, \gamma_{p,y})\bigr) \\
 &=\sqrt{\bigl(\alpha \rho(p,x)\bigr)^2 
 +\bigl(\beta \rho(p,y)\bigr)^2 
 -2\bigl(\alpha \rho(p,x)\bigr)\bigl(\beta \rho(p,y)\bigr)
 \cos \angle_p(\gamma_{p,x}, \gamma_{p,y})}  
 \end{split}
\end{align}
for all $(\alpha, \gamma_{p,x}), (\beta, \gamma_{p,y})\in Y$. 
For each $\gamma \in \Gamma_p$, we define $L_{\gamma}$ by 
\[
 L_{\gamma} = \rho (p, x), 
\]
where $\gamma$ is a geodesic from $p$ to $x$. 
Using this notation, we have 
\begin{align}\label{eq:def-d_p-L}
 \tilde{d}_p\bigl((\alpha, \gamma), (\beta, \eta)\bigr) 
 =\sqrt{\bigl(\alpha L_{\gamma}\bigr)^2 
 +\bigl(\beta L_{\eta}\bigr)^2 
 -2\bigl(\alpha L_{\gamma}\bigr)\bigl(\beta L_{\eta}\bigr) \cos \angle_p(\gamma, \eta)}  
\end{align}
for all $(\alpha, \gamma), (\beta, \eta)\in Y$. 
It is easy to prove that 
\begin{align}\label{eq:ineq-d_p}
 \abs{\alpha L_{\gamma} - \beta L_{\eta}} 
 \leq \tilde{d}_p\bigl((\alpha, \gamma), (\beta, \eta)\bigr) 
 \leq \alpha L_{\gamma} + \beta L_{\eta}
\end{align}
for all $(\alpha, \gamma), (\beta, \eta) \in Y$. 

Using a technique in~\cite{MR1835418}*{Proposition 3.6.13}, 
we prove the following. 

\begin{lemma}\label{lem:d_p-pm}
 Let $(X, \rho)$ be a $\CAT(0)$ space, 
 $p$ an element in $X$, and $Y$ the set defined by~\eqref{eq:def-Y}. 
 Then $\tilde{d}_p$ is a pseudometric 
 on $Y$. 
\end{lemma}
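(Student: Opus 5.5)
The plan is to follow the Euclidean-cone argument of \cite{MR1835418}*{Proposition 3.6.13}. Well-definedness of $\tilde{d}_p$ is immediate, since the radicand equals $(\alpha L_\gamma-\beta L_\eta)^2+2\alpha\beta L_\gamma L_\eta(1-\cos\angle_p(\gamma,\eta))\geq 0$. Nonnegativity and symmetry then follow from the symmetry of $\angle_p$. We also get $\tilde{d}_p\bigl((\alpha,\gamma),(\alpha,\gamma)\bigr)=0$, because $\angle_p(\gamma,\gamma)=0$: the formula then reduces to $\abs{\alpha L_\gamma-\alpha L_\gamma}$. Only the triangle inequality remains.

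Write $a=(\alpha,\gamma)$, $b=(\beta,\eta)$, $c=(\delta,\zeta)$ in $Y$, and set $r_a=\alpha L_\gamma$, $r_b=\beta L_\eta$, $r_c=\delta L_\zeta$. Put $\theta_1=\angle_p(\gamma,\zeta)$, $\theta_2=\angle_p(\zeta,\eta)$ and $\theta=\angle_p(\gamma,\eta)$. We must show $\tilde{d}_p(a,b)\leq\tilde{d}_p(a,c)+\tilde{d}_p(c,b)$.

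The degenerate cases are handled by \eqref{eq:ineq-d_p}. If $r_c=0$, then $\tilde{d}_p(a,c)=r_a$ and $\tilde{d}_p(c,b)=r_b$, and the upper bound in \eqref{eq:ineq-d_p} gives the claim. If $r_a=0$ or $r_b=0$, the inequality reduces to $r_b\leq\tilde{d}_p(a,c)+\tilde{d}_p(c,b)$ (or the symmetric version), which follows from $r_b\leq r_c+\tilde{d}_p(c,b)$ together with $\tilde{d}_p(a,c)=r_c$. A subtle point is that the convention $\angle_p=\pi/2$ for zero geodesics is irrelevant in all these cases, since the cosine term is multiplied by a zero length.

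In the main case all three radii are positive, and by the triangle inequality for $\angle_p$ we have $\theta\leq\theta_1+\theta_2$. We then split according to the size of $\theta_1+\theta_2$.

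If $\theta_1+\theta_2\leq\pi$, we work in $\R^2$. Choose $\bar a$, $\bar c$, $\bar b$ with $\abs{\bar a}=r_a$, $\abs{\bar c}=r_c$, $\abs{\bar b}=r_b$, placing $\bar a$ and $\bar b$ on opposite sides of the ray through $\bar c$ so that the angle from $\bar a$ to $\bar c$ is $\theta_1$ and from $\bar c$ to $\bar b$ is $\theta_2$. The angle between $\bar a$ and $\bar b$ is then exactly $\theta_1+\theta_2\in[0,\pi]$. By the law of cosines, $\abs{\bar a-\bar c}=\tilde{d}_p(a,c)$ and $\abs{\bar c-\bar b}=\tilde{d}_p(c,b)$. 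Since $\cos$ is decreasing on $[0,\pi]$ and $\theta\leq\theta_1+\theta_2$, we get $\tilde{d}_p(a,b)\leq\abs{\bar a-\bar b}$. The Euclidean triangle inequality $\abs{\bar a-\bar b}\leq\abs{\bar a-\bar c}+\abs{\bar c-\bar b}$ then finishes this case.

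If $\theta_1+\theta_2>\pi$, we use Lemma~\ref{lem:Alex}. Choose nonzero $x,y,z\in\R^2$ with $\abs{x}=r_a$, $\abs{y}=r_c$, $\abs{z}=r_b$, where the angle between $x$ and $y$ is $\theta_1$ and between $z$ and $y$ is $\theta_2$. Lemma~\ref{lem:Alex} gives $\tilde{d}_p(a,c)+\tilde{d}_p(c,b)=\abs{x-y}+\abs{y-z}\geq r_a+r_b$. The upper bound $\tilde{d}_p(a,b)\leq r_a+r_b$ from \eqref{eq:ineq-d_p} then closes the argument.

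The only real obstacle is the planar construction in the first case. It must actually realize the angle $\theta_1+\theta_2$ between $\bar a$ and $\bar b$, which is why we take them on opposite sides of the ray through $\bar c$ (rotation by $\pm\theta_i$). Once that is set up, everything else is bookkeeping.
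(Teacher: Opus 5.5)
Your proposal is correct and follows essentially the same route as the paper. The degenerate cases are handled via \eqref{eq:ineq-d_p}, and the main case uses the same planar configuration split on $\theta_1+\theta_2\leq\pi$ (angle triangle inequality, monotonicity of $\cos$, Euclidean triangle inequality) versus $\theta_1+\theta_2>\pi$ (Lemma~\ref{lem:Alex} plus the upper bound in \eqref{eq:ineq-d_p}), with only cosmetic differences such as naming the middle point $c$ and making the nonnegativity of the radicand explicit.
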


\begin{proof}
 It follows from~\eqref{eq:ineq-d_p} that 
 \[
  \tilde{d}_p\bigl((\alpha, \gamma), (\beta, \eta)\bigr)\in [0,\infty)
 \]
 for all $(\alpha, \gamma), (\beta, \eta)\in Y$. 
 Since $\angle_p$ is a pseudometric on $\Gamma_p$, 
 we know that 
 \[
  \tilde{d}_p\bigl((\alpha, \gamma), (\alpha, \gamma)\bigr) =0
 \]
 for all $(\alpha, \gamma)\in Y$ and 
 $\tilde{d}_p$ is symmetric. 
 
 We next show that $\tilde{d}_p$ satisfies the triangle 
 inequality. Let $(\alpha_i, \gamma_i) \in Y$ 
 for $i=1,2,3$. 
 If $\alpha_1 L_{\gamma_1}=0$, 
 then it follows from~\eqref{eq:def-d_p-L} and~\eqref{eq:ineq-d_p} that 
 \begin{align*}
  \tilde{d}_p\bigl((\alpha_1, \gamma_1), (\alpha_3, \gamma_3)\bigr) 
  &=\alpha_3 L_{\gamma_3} \\
  &\leq \alpha_2 L_{\gamma_2} + \abs{\alpha_3 L_{\gamma_3} - \alpha_2 L_{\gamma_2}} \\
  &\leq 
   \tilde{d}_p\bigl((\alpha_1, \gamma_1), (\alpha_2, \gamma_2)\bigr) 
  +
  \tilde{d}_p\bigl((\alpha_2, \gamma_2), (\alpha_3, \gamma_3)\bigr). 
 \end{align*}
 If $\alpha_3 L_{\gamma_3}=0$, then 
 \begin{align*}
  \tilde{d}_p\bigl((\alpha_1, \gamma_1), (\alpha_3, \gamma_3)\bigr) 
  &=\alpha_1 L_{\gamma_1} \\
  &\leq \abs{\alpha_1 L_{\gamma_1} - \alpha_2 L_{\gamma_2}} + \alpha_2 L_{\gamma_2} \\
  &\leq 
   \tilde{d}_p\bigl((\alpha_1, \gamma_1), (\alpha_2, \gamma_2)\bigr) 
  +
  \tilde{d}_p\bigl((\alpha_2, \gamma_2), (\alpha_3, \gamma_3)\bigr). 
 \end{align*}
 If $\alpha_2 L_{\gamma_2} =0$, then we have 
 \begin{align*}
  \tilde{d}_p\bigl((\alpha_1, \gamma_1), (\alpha_3, \gamma_3)\bigr) 
  &\leq \alpha_1 L_{\gamma_1} + \alpha_3 L_{\gamma_3} \\
  &=
   \tilde{d}_p\bigl((\alpha_1, \gamma_1), (\alpha_2, \gamma_2)\bigr) 
  +
  \tilde{d}_p\bigl((\alpha_2, \gamma_2), (\alpha_3, \gamma_3)\bigr). 
 \end{align*}
 We next consider the case where $\alpha_i L_{\gamma_i} >0$ 
 for all $i=1,2,3$. Set $\theta=\angle_p(\gamma_1, \gamma_2)$ 
 and $\theta'=\angle_p(\gamma_2, \gamma_3)$. 
 Let $\overline{y}_1, \overline{y}_2, \overline{y}_3$ 
 be points in $\R^2$ given by 
 \[
  \overline{y}_1 = 
   \alpha_1 L_{\gamma_1}
  \begin{pmatrix}
   1 \\
   0
  \end{pmatrix}, 
  \quad 
  \overline{y}_2 = 
  \alpha_2 L_{\gamma_2}
  \begin{pmatrix}
   \cos \theta  \\
   \sin \theta 
  \end{pmatrix}, 
  \quad \textrm{and} \quad 
  \overline{y}_3 = 
   \alpha_3 L_{\gamma_3}
   \begin{pmatrix}
   \cos (\theta + \theta') \\
   \sin (\theta + \theta')
  \end{pmatrix}. 
 \] 
 Then we have
 \begin{itemize}
  \item $\abs{\overline{y}_i} = \alpha_i L_{\gamma_i}$ 
 for all $i=1,2,3$; 
  \item $\arccos \frac{\ip{\overline{y}_1}{\overline{y}_2}}
 {\abs{\overline{y}_1}\abs{\overline{y}_2}} = \theta$ 
 and $\arccos \frac{\ip{\overline{y}_2}{\overline{y}_3}}
 {\abs{\overline{y}_2}\abs{\overline{y}_3}} = \theta'$. 
%  \item $\ip{\overline{y}_1}{\overline{u}}\ip{\overline{y}_2}{\overline{u}}\leq 0$, 
% where $\overline{u}$ denotes a unit vector in $\R^2$ 
% such that $\ip{\overline{y}_2}{\overline{u}}=0$. 
 \end{itemize}
 Consequently, we have 
 \begin{align}
  \begin{split}\label{eq:d_p-pm-a}
  &\tilde{d}_p\bigl((\alpha_1, \gamma_1), (\alpha_2, \gamma_2)\bigr) \\
  &= \sqrt{\bigl(\alpha_1 L_{\gamma_1}\bigr)^2 
 +\bigl(\alpha_2 L_{\gamma_2}\bigr)^2 
 -2\bigl(\alpha_1 L_{\gamma_1}\bigr)
 \bigl(\alpha_2 L_{\gamma_2}\bigr)\cos \angle_p(\gamma_1, \gamma_2)} \\
  &= \sqrt{\abs{\overline{y}_1}^2 + \abs{\overline{y}_2}^2 
 -2\abs{\overline{y}_1}\abs{\overline{y}_2}
 \cos \theta} \\
  &= \sqrt{\abs{\overline{y}_1}^2 + \abs{\overline{y}_2}^2 
 -2\ip{\overline{y}_1}{\overline{y}_2}} 
  =\abs{\overline{y}_1-\overline{y}_2}. 
  \end{split}
 \end{align}
 Similarly, we have 
 \begin{align}
  \begin{split}\label{eq:d_p-pm-b}
  &\tilde{d}_p\bigl((\alpha_2, \gamma_2), (\alpha_3, \gamma_3)\bigr) \\
  &= \sqrt{\bigl(\alpha_2 L_{\gamma_2}\bigr)^2 
    +\bigl(\alpha_3 L_{\gamma_3}\bigr)^2 
 -2\bigl(\alpha_2 L_{\gamma_2}\bigr)
 \bigl(\alpha_3 L_{\gamma_3}\bigr)\cos \angle_p(\gamma_1, \gamma_3)} 
  = \abs{\overline{y}_2-\overline{y}_3}.       
  \end{split}
 \end{align}

 If $\theta+\theta'\leq \pi$, then we have 
 \[
  \theta + \theta' 
  = \arccos \frac{\ip{\overline{y}_1}{\overline{y}_3}}
  {\abs{\overline{y}_1}\abs{\overline{y}_3}}.  
 \]
 By the triangle inequality for $\angle_p$, we have 
 \[
  \angle_p(\gamma_1, \gamma_3) 
  \leq \angle_p(\gamma_1, \gamma_2) +   \angle_p(\gamma_2, \gamma_3) 
  =\theta + \theta' =
  \arccos \frac{\ip{\bar{y}_1}{\bar{y}_3}}{\abs{\bar{y}_1}\abs{\bar{y}_3}}
 \]
 and hence it follows from 
 the triangle inequality 
 for $\abs{\,\cdot \,}$,~\eqref{eq:d_p-pm-a}, 
 and~\eqref{eq:d_p-pm-b} that 
 \begin{align*}
  \begin{split}
  \tilde{d}_p\bigl((\alpha_1, \gamma_1), (\alpha_3, \gamma_3)\bigr) 
  &=\sqrt{\abs{\overline{y}_1}^2 + \abs{\overline{y}_3}^2 
 -2\abs{\overline{y}_1}\abs{\overline{y}_3}\cos \angle_p(\gamma_1, \gamma_3)} \\
  &\leq \sqrt{\abs{\overline{y}_1}^2 + \abs{\overline{y}_3}^2 
 -2\abs{\overline{y}_1}\abs{\overline{y}_3}
 \cos \left(
 \arccos \frac{\ip{\bar{y}_1}{\bar{y}_3}}{\abs{\bar{y}_1}\abs{\bar{y}_3}}
 \right)
 } \\
  &= \sqrt{\abs{\overline{y}_1}^2 + \abs{\overline{y}_3}^2 
 -2\ip{\bar{y}_1}{\bar{y}_3}} \\
  &=\abs{\overline{y}_1-\overline{y}_3} \\
  &\leq \abs{\overline{y}_1-\overline{y}_2} + \abs{\overline{y}_2-\overline{y}_3} \\
  &= \tilde{d}_p\bigl((\alpha_1, \gamma_1), (\alpha_2, \gamma_2)\bigr) 
  + \tilde{d}_p\bigl((\alpha_2, \gamma_2), (\alpha_3, \gamma_3)\bigr). 
  \end{split}
 \end{align*}
On the other hand, if $\theta + \theta' > \pi$, 
 then it follows from Lemma~\ref{lem:Alex} that 
 \begin{align}\label{eq:d_p-pm-c}
  \abs{\overline{y}_1 - \overline{y}_2} + \abs{\overline{y}_2-\overline{y}_3} 
  \geq \abs{\overline{y}_1} + \abs{\overline{y}_3}. 
 \end{align}
 Then it 
 follows from~\eqref{eq:ineq-d_p},~\eqref{eq:d_p-pm-a},~\eqref{eq:d_p-pm-b}, and~\eqref{eq:d_p-pm-c} that 
 \begin{align*}
  \begin{split}
   \tilde{d}_p\bigl((\alpha_1, \gamma_1), (\alpha_2, \gamma_2)\bigr) 
  + \tilde{d}_p\bigl((\alpha_2, \gamma_2), (\alpha_3, \gamma_3)\bigr) 
  &= \abs{\overline{y}_1 - \overline{y}_2} + \abs{\overline{y}_2-\overline{y}_3} \\
  &\geq \abs{\overline{y}_1} + \abs{\overline{y}_3} \\
  &= \alpha_1 L_{\gamma_1} + \alpha_3 L_{\gamma_3} \\
  &\geq \tilde{d}_p\bigl((\alpha_1, \gamma_1), (\alpha_3, \gamma_3)\bigr). 
  \end{split}
 \end{align*}
 Therefore, $\tilde{d}_p$ is a pseudometric on $Y$. 
\end{proof}

Let $(X, \rho)$ be a $\CAT(0)$ space, 
$p$ an element in $X$, $Y$ the set defined by~\eqref{eq:def-Y}, 
and $\tilde{d}_p$ the real function defined 
by~\eqref{eq:def-d_p}. 
It follows from Lemma~\ref{lem:d_p-pm} that 
$\tilde{d}_p$ is a pseudometric on $Y$ 
and hence an equivalence relation $\sim$ on $Y$ 
can be defined by 
\begin{align}\label{eq:def-sim}
 (\alpha, \gamma)\sim (\beta, \eta) 
 \Longleftrightarrow 
 \tilde{d}_p\bigl( (\alpha, \gamma), (\beta, \eta) \bigr) =0.  
\end{align}
The equivalence class $[(\alpha, \gamma)]$ of $(\alpha, \gamma) \in Y$ 
is denoted by $\alpha \gamma$, that is, 
\[
 \alpha \gamma 
 = \bigl\{(\beta, \eta)\in Y: 
 (\alpha, \gamma)\sim (\beta, \eta)\bigr\}. 
\]
The equivalence class $[(1, \gamma)]$ is simply denoted by $\gamma$. 
Let $T_p X$ be a metric space defined by 
\[
 T_pX = \bigl\{\alpha \gamma: (\alpha, \gamma) \in Y\bigr\} 
\]
with a metric $d_p$ on $T_pX$ given by 
\[
 d_p(\alpha \gamma, \beta \eta) 
 = \tilde{d}_p\bigl((\alpha, \gamma), (\beta, \eta)\bigr)
\]
for all $\alpha \gamma, \beta \eta \in T_p X$. 
The metric space $\bigl(T_pX, d_p\bigr)$ is 
called the tangent space of $X$ at $p$. 
The vertex point $0_p$ of $T_pX$ is defined by 
\[
 0_p = [(0, \gamma_{p,p})]. 
\]
It is easy to see that 
\begin{align*}
 0_p 
 &= \bigl\{(\alpha, \gamma)\in Y: \alpha L_{\gamma}=0\bigr\} 
 = \bigl\{(0, \gamma_{p, x}): x\in X\bigr\} 
 \cup \bigl\{(\alpha, \gamma_{p, p}): \alpha \geq 0\bigr\}. 
\end{align*}
By the definition of $T_pX$, we have the following. 
\begin{lemma}
 Let $(X, \rho)$ be a $\CAT(0)$ space, $p$ an element in $X$, 
 and $T_pX$ the tangent space of $X$ at $p$. 
 Then $(T_pX, d_p)$ is a metric space. 
\end{lemma}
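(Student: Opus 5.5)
This is the standard metric quotient of a pseudometric space, so the plan is to verify that $d_p$ is well defined on equivalence classes and then inherit the metric axioms from Lemma~\ref{lem:d_p-pm}.

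\emph{Well-definedness.} Suppose $(\alpha,\gamma)\sim(\alpha',\gamma')$ and $(\beta,\eta)\sim(\beta',\eta')$. By Lemma~\ref{lem:d_p-pm}, $\tilde{d}_p$ satisfies the triangle inequality, so
\[
 \tilde{d}_p\bigl((\alpha,\gamma),(\beta,\eta)\bigr)
 \leq \tilde{d}_p\bigl((\alpha,\gamma),(\alpha',\gamma')\bigr)
 +\tilde{d}_p\bigl((\alpha',\gamma'),(\beta',\eta')\bigr)
 +\tilde{d}_p\bigl((\beta',\eta'),(\beta,\eta)\bigr).
\]
The first and third terms vanish by~\eqref{eq:def-sim}, leaving
\[
 \tilde{d}_p\bigl((\alpha,\gamma),(\beta,\eta)\bigr)\leq \tilde{d}_p\bigl((\alpha',\gamma'),(\beta',\eta')\bigr).
\]
Exchanging the roles of the primed and unprimed pairs gives the reverse inequality. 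Hence $d_p(\alpha\gamma,\beta\eta)$ does not depend on the chosen representatives, and $d_p$ is a well-defined real function on $T_pX\times T_pX$. The same triangle-inequality argument, together with symmetry and $\tilde{d}_p(y,y)=0$, is what guarantees that $\sim$ is an equivalence relation, which the paper already asserts.

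\emph{Metric axioms.}
\begin{itemize}
 \item Nonnegativity, symmetry and the triangle inequality for $d_p$ follow directly from the corresponding properties of $\tilde{d}_p$ in Lemma~\ref{lem:d_p-pm}, applied to any representatives.
 \item The only new axiom is definiteness. If $d_p(\alpha\gamma,\beta\eta)=0$, then $\tilde{d}_p\bigl((\alpha,\gamma),(\beta,\eta)\bigr)=0$, so $(\alpha,\gamma)\sim(\beta,\eta)$ and therefore $\alpha\gamma=\beta\eta$.
 \item Conversely, $d_p(\alpha\gamma,\alpha\gamma)=\tilde{d}_p\bigl((\alpha,\gamma),(\alpha,\gamma)\bigr)=0$.
\end{itemize}

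No real obstacle arises. The substantive work, namely the triangle inequality for $\tilde{d}_p$ via Lemma~\ref{lem:Alex}, was already done in Lemma~\ref{lem:d_p-pm}. The only step needing care is the representative-independence argument above.
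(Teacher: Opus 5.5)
Your proposal is correct and follows the paper's approach. The paper gives no separate proof: it states that the lemma follows ``by the definition of $T_pX$'' once Lemma~\ref{lem:d_p-pm} shows that $\tilde{d}_p$ is a pseudometric. You have written out the standard metric-quotient argument the paper leaves implicit: independence of representatives via the triangle inequality, inheritance of the pseudometric axioms, and definiteness from the definition of $\sim$.
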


We can prove the following fundamental lemma. 
\begin{lemma}\label{lem:character-equiv-Y}
 Let $(X, \rho)$ be a $\CAT(0)$ space, $p$ an element in $X$, 
 $T_pX$ the tangent space of $X$ at $p$, 
 and both $(\alpha, \gamma)$ and $(\beta, \eta)$ 
 be elements of $[0,\infty) \times \Gamma_p$. 
 Then $(\alpha, \gamma)\sim (\beta, \eta)$ if and only if 
 either one of the following conditions holds. 
 \begin{enumerate}
  \item[(i)]$\alpha L_{\gamma}=\beta L_{\eta}=0$; 
  \item[(ii)] $\alpha L_{\gamma}=\beta L_{\eta}>0$ and $\angle_p(\gamma, \eta)=0$. 
 \end{enumerate}
\end{lemma}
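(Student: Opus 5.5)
The plan is to unwind the definition~\eqref{eq:def-sim} together with the formula~\eqref{eq:def-d_p-L} and reduce everything to an elementary statement about two nonnegative reals and an angle. Put $a=\alpha L_{\gamma}$, $b=\beta L_{\eta}$ and $\phi=\angle_p(\gamma,\eta)\in[0,\pi]$. By~\eqref{eq:def-d_p-L}, $(\alpha,\gamma)\sim(\beta,\eta)$ holds if and only if
\[
 a^2+b^2-2ab\cos\phi = 0 .
\]
The key step is to rewrite this quantity as
\[
 a^2+b^2-2ab\cos\phi = (a-b)^2 + 2ab(1-\cos\phi),
\]
a sum of two nonnegative terms, since $a,b\geq 0$ and $\cos\phi\leq 1$.

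For sufficiency, assume (i) holds. Then $a=b=0$ and the expression vanishes. If (ii) holds, then $a=b$ and $\cos\phi=1$, so both summands vanish. In either case $\tilde d_p\bigl((\alpha,\gamma),(\beta,\eta)\bigr)=0$.

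For necessity, assume $\tilde d_p\bigl((\alpha,\gamma),(\beta,\eta)\bigr)=0$. Then both nonnegative summands vanish, which gives $a=b$ and $ab(1-\cos\phi)=0$. If $a=0$, then $b=0$ as well and (i) holds. Otherwise $a=b>0$, so $ab>0$ and hence $\cos\phi=1$. Since $\phi\in[0,\pi]$ and $\arccos$ is injective there, this gives $\phi=0$, which is (ii).

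One point needs a check: the convention that $\angle_p=\pi/2$ when exactly one of the geodesics is trivial. This convention only matters when $L_\gamma=0$ or $L_\eta=0$. Even then the formula~\eqref{eq:def-d_p-L} is used verbatim, and the case $ab>0$ forces $L_\gamma,L_\eta>0$, so that case never relies on the convention. No real obstacle is expected; the only care needed is that the decomposition into two nonnegative terms is valid for every value of $\phi$, including these conventional ones, which holds because $\cos\phi\leq 1$ always.
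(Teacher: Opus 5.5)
Your proof is correct and is essentially the paper's argument. The paper first obtains $\alpha L_{\gamma}=\beta L_{\eta}$ from the lower bound in \eqref{eq:ineq-d_p}, and then reduces $\tilde{d}_p=0$ to $2(\alpha L_{\gamma})^2\bigl(1-\cos\angle_p(\gamma,\eta)\bigr)=0$; your decomposition $(a-b)^2+2ab(1-\cos\phi)$ does both steps at once, since it is exactly the identity behind \eqref{eq:ineq-d_p}.
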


\begin{proof}
 The if part is obvious from the definition of $\tilde{d}_p$. 
 We prove the only if part. Suppose that $(\alpha, \gamma)\sim (\beta, \eta)$. Then it follows from~\eqref{eq:ineq-d_p} 
 and~\eqref{eq:def-sim} that 
 \begin{align*}
   \abs{\alpha L_{\gamma} - \beta L_{\eta}} 
   \leq \tilde{d}_p\bigl((\alpha, \gamma), (\beta, \eta)\bigr) =0
 \end{align*}
 and hence 
 \[
  \alpha L_{\gamma} = \beta L_{\eta}. 
 \] 
 This gives us that 
 \begin{align*}
  0 = \tilde{d}_p\bigl((\alpha, \gamma), (\beta, \eta)\bigr)
    =\sqrt{2\bigl(\alpha L_{\gamma}\bigr)^2 \bigl(1-\cos \angle_p(\gamma, \eta)\bigr)}.
 \end{align*} 
 Hence we have $\alpha L_{\gamma}=0$ or 
 $1-\cos \angle_p(\gamma, \eta)=0$. 
 If $\alpha L_{\gamma} = 0$, then the part~(i) holds 
 and the part~(ii) does not hold. 
 If $\alpha L_{\gamma} \neq 0$, then 
 we have $1-\cos \angle_p(\gamma, \eta)=0$. 
 In this case, the part~(ii) holds 
 and the part~(i) does not hold. 
\end{proof}

Let $(X,\rho)$ be a $\CAT(0)$ space and 
$T_pX$ the tangent space of $X$ at $p\in X$. 
We define a real function $g_p$ on $T_pX\times T_pX$ by 
\begin{align}\label{eq:def-g_p}
 g_p(\alpha \gamma, \beta \eta) 
 = \frac{1}{2}\left(
 \bigl(d_p(\alpha \gamma, 0_p)\bigr)^2 
 + \bigl(d_p(\beta \eta, 0_p)\bigr)^2 
 - d_p(\alpha \gamma, \beta \eta)^2
 \right) 
\end{align}
for all $\alpha \gamma, \beta \eta \in T_pX$. 
It is obvious that 
\begin{align}\label{eq:equality-g_p}
 g_p(\alpha \gamma, \beta \eta) 
= \alpha L_{\gamma} \beta L_{\eta} \cos \angle_p(\gamma, \eta) 
= \alpha \rho(p, x) \beta \rho(p, y) \cos \angle_p(\gamma, \eta) 
\end{align}
for all $\alpha \gamma, \beta \eta \in T_pX$, 
where $\gamma$ and $\eta$ are the geodesics 
from $p$ to $x$ and from $p$ to $y$, respectively. 
We can also prove that 
\begin{itemize}
 \item $\abs{g_p(\alpha \gamma, \beta \eta)} \leq \alpha L_{\gamma} \beta L_{\eta}$; 
 \item $g_p(0_p, \alpha \gamma)=0$; 
 \item $g_p(\alpha \gamma, \beta \eta) 
 = g_p(\beta \eta, \alpha \gamma)$; 
 \item $\lambda g_p(\alpha \gamma, \beta \eta) 
 = g_p(\lambda \alpha \gamma, \beta \eta)$
\end{itemize}
for all $\alpha \gamma, \beta \eta\in T_pX$ and $\lambda \in [0,\infty)$. 
We also know the following inequality between $g_p$ 
and the quasilinearization on a $\CAT(0)$ space. 
\begin{lemma}\label{lem:g_p-ql}
 If $(X, \rho)$ is a $\CAT(0)$ space and $T_pX$ 
 is the tangent space of $X$ at $p\in X$, then 
 \[
  g_p\bigl(\alpha \gamma_{p, x}, \beta \gamma_{p, y}\bigr) 
  \geq \alpha \beta \ip{\overrightarrow{px}}{\overrightarrow{py}}
 \]
 for all $\alpha \gamma_{p, x}, \beta \gamma_{p, y} \in T_pX$. 
\end{lemma}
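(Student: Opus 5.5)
We reduce the statement to the inequality between the Alexandrov angle and the comparison angle at $p$. By~\eqref{eq:equality-g_p},
\[
 g_p\bigl(\alpha \gamma_{p, x}, \beta \gamma_{p, y}\bigr)
 = \alpha \beta\, \rho(p,x)\rho(p,y) \cos \angle_p(\gamma_{p,x}, \gamma_{p,y}),
\]
and by the definition of the quasilinearization,
\[
 \ip{\overrightarrow{px}}{\overrightarrow{py}}
 = \frac{1}{2}\bigl(\rho(p,x)^2 + \rho(p,y)^2 - \rho(x,y)^2\bigr).
\]
We first handle the degenerate cases. If $p=x$ or $p=y$, then $\rho(p,x)\rho(p,y)=0$, so the left-hand side is $0$. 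The right-hand side is also $0$: for instance, if $p=x$, then $\rho(p,p)^2+\rho(p,y)^2-\rho(p,y)^2=0$. Equality therefore holds in these cases, and the angle conventions $\pi/2$ and $0$ are irrelevant. Likewise, if $\alpha\beta=0$, both sides vanish.

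Now assume $p\neq x$ and $p\neq y$, and $\alpha\beta>0$. Dividing by $\alpha\beta>0$, it suffices to show
\[
 \rho(p,x)\rho(p,y)\cos \angle_p(\gamma_{p,x},\gamma_{p,y}) \geq \frac{1}{2}\bigl(\rho(p,x)^2+\rho(p,y)^2-\rho(x,y)^2\bigr).
\]
By~\eqref{eq:character-comparison}, the right-hand side equals $\rho(p,x)\rho(p,y)\cos \overline{\angle}_p(\gamma_{p,x},\gamma_{p,y})$. It is therefore enough to know
\[
 \angle_p(\gamma_{p,x},\gamma_{p,y}) \leq \overline{\angle}_p(\gamma_{p,x},\gamma_{p,y}),
\]
since $\cos$ is decreasing on $[0,\pi]$ and both angles lie in $[0,\pi]$.

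This angle comparison is the only substantive step, and it follows from~\eqref{eq:ineq-Alex}. The first line there expresses $\angle_p(\gamma_{p,x},\gamma_{p,y})$ as the infimum of $\overline{\angle}_p(\gamma_{p,x}(s),\gamma_{p,y}(t))$ over $s\in(0,\rho(p,x)]$ and $t\in(0,\rho(p,y)]$. Taking $s=\rho(p,x)$ and $t=\rho(p,y)$ gives $\gamma_{p,x}(s)=x$ and $\gamma_{p,y}(t)=y$, so the infimum is at most $\overline{\angle}_p(x,y)$. Equivalently, one can use the stated monotonicity of comparison angles in $s$ and $t$, which is where the $\CAT(0)$ hypothesis enters.

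Multiplying the resulting cosine inequality by $\alpha\beta\,\rho(p,x)\rho(p,y)\geq 0$ yields the claim. I do not expect a real obstacle. The only care needed is checking the degenerate cases against the conventions in the definition of $\angle_p$, and confirming that the representative geodesics $\gamma_{p,x}$ and $\gamma_{p,y}$ determine $L_\gamma=\rho(p,x)$ and $L_\eta=\rho(p,y)$ as in~\eqref{eq:equality-g_p}.
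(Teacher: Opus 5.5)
Your proof is correct and follows the paper's argument. Like the paper, you dispose of the degenerate case ($\alpha\rho(p,x)=0$ or $\beta\rho(p,y)=0$), take $s=\rho(p,x)$, $t=\rho(p,y)$ in the infimum formula~\eqref{eq:ineq-Alex} to get $\angle_p(\gamma_{p,x},\gamma_{p,y})\leq\overline{\angle}_p(\gamma_{p,x},\gamma_{p,y})$, and conclude from the monotonicity of $\cos$ on $[0,\pi]$ together with~\eqref{eq:character-comparison} and~\eqref{eq:equality-g_p}.
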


\begin{proof}
 Let $\alpha \gamma_{p, x}, \beta \gamma_{p, y} \in T_pX$ be given. 
 If $\alpha \rho(p, x) = 0$ or $\beta \rho(p, y)=0$, 
 then both sides of the inequality are equal to $0$. 
 Thus we may consider the case where 
 $\alpha \rho(p, x) > 0$ and $\beta \rho(p, y)>0$. 
 Set $l=\rho(p, x)$ and $l'=\rho(p, y)$.  
 Since $X$ is a $\CAT(0)$ space, 
 it follows from~\eqref{eq:ineq-Alex} that 
 \begin{align*}
  \angle_p(\gamma_{p, x}, \gamma_{p, y}) 
 &= \inf_{s\in (0,l], \, t\in (0,l']} 
 \overline{\angle}_p(\gamma_{p, x}(s), \gamma_{p, y}(t)) \\
 &\leq
 \overline{\angle}_p(\gamma_{p, x}(l), \gamma_{p, y}(l')) 
 = \overline{\angle}_p(x, y) = \overline{\angle}_p(\gamma_{p, x}, \gamma_{p, y})  
 \end{align*}
 and hence it follows from~\eqref{eq:character-comparison} 
 and~\eqref{eq:equality-g_p} that 
 \begin{align*}
  g_p(\alpha \gamma_{p, x}, \beta \gamma_{p, y}) 
  &= \alpha \rho(p, x) \beta \rho (p, y) \cos \angle_p(\gamma_{p, x}, \gamma_{p, y}) \\
  &\geq \alpha \rho(p, x) \beta \rho (p, y) 
  \cos \overline{\angle}_p(\gamma_{p, x}, \gamma_{p, y}) \\
  &= \alpha \rho(p, x) \beta \rho (p, y) 
  \cdot \frac{\rho(p, x)^2 + \rho(p, y)^2 - \rho(x, y)^2}{2\rho(p, x)\rho(p, y)} \\
  &= \frac{\alpha \beta}{2}
  \bigl(\rho(p, x)^2 + \rho(p, y)^2 - \rho(x, y)^2\bigr)
 =\alpha \beta \ip{\overrightarrow{px}}{\overrightarrow{py}}. 
 \end{align*}
 Thus the result holds. 
\end{proof}

We know the following lemma. 

\begin{lemma}[\cite{MR1744486}]\label{lem:FVF}
 Let $(X, \rho)$ be a $\CAT(0)$ space, 
 $p$ a point in $X$, 
 and $x,y$ points in $X\setminus \{p\}$. 
 Then 
 \[
  \angle_{p}(x,y) 
 = \lim_{t\to +0} \overline{\angle}_p\bigl(x, \gamma_{p, y}(t)\bigr). 
 \]
\end{lemma}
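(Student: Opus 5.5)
The plan is to prove the equality by sandwiching: a lower bound from monotonicity of comparison angles, and a matching upper bound from a first-order expansion of $\rho(x,\gamma_{p,y}(t))$. Set $l=\rho(p,x)>0$, $l'=\rho(p,y)>0$, and write $y_t=\gamma_{p,y}(t)$ for $t\in(0,l']$.

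For the lower bound, recall from \eqref{eq:ineq-Alex} that $\angle_p(x,y)$ is the infimum of $\overline{\angle}_p\bigl(\gamma_{p,x}(s),\gamma_{p,y}(t)\bigr)$ over $s\in(0,l]$ and $t\in(0,l']$. Taking $s=l$ gives
\[
\angle_p(x,y)\le \overline{\angle}_p(x,y_t)
\]
for every $t$. The CAT(0) monotonicity stated earlier shows that $t\mapsto\overline{\angle}_p(x,y_t)$ is nondecreasing. Hence the limit as $t\to+0$ exists and is at least $\angle_p(x,y)$.

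The reverse inequality is the main step. Fix $\varepsilon>0$ and pick $s\in(0,l]$ and $t_0>0$ such that $\overline{\angle}_p\bigl(\gamma_{p,x}(s),y_{t}\bigr)\le\angle_p(x,y)+\varepsilon$ for all $t\le t_0$; this is possible by \eqref{eq:ineq-Alex}. Put $x_s=\gamma_{p,x}(s)$. Since $\rho(p,x)=\rho(p,x_s)+\rho(x_s,x)$, the triangle inequality gives
\[
\rho(x,y_t)\le\rho(x,x_s)+\rho(x_s,y_t)=(l-s)+\rho(x_s,y_t).
\]
By \eqref{eq:character-comparison}, $\rho(x_s,y_t)^2=s^2+t^2-2st\cos\overline{\angle}_p(x_s,y_t)$, so
\[
\rho(x_s,y_t)=s-t\cos\overline{\angle}_p(x_s,y_t)+O(t^2)\le s-t\cos\bigl(\angle_p(x,y)+\varepsilon\bigr)+O(t^2),
\]
with $s$ held fixed as $t\to+0$. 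This yields $\rho(x,y_t)\le l-t\cos(\angle_p(x,y)+\varepsilon)+O(t^2)$. If $\angle_p(x,y)+\varepsilon>\pi$, the needed bound $\overline{\angle}_p(x,y_t)\le\pi$ is trivial, so assume it is at most $\pi$.

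Next expand $\cos\overline{\angle}_p(x,y_t)=\bigl(l^2+t^2-\rho(x,y_t)^2\bigr)/(2lt)$. Substituting the bound above gives
\[
\rho(x,y_t)^2\le l^2-2lt\cos\bigl(\angle_p(x,y)+\varepsilon\bigr)+O(t^2),
\]
and therefore $\cos\overline{\angle}_p(x,y_t)\ge\cos\bigl(\angle_p(x,y)+\varepsilon\bigr)-O(t)$. Letting $t\to+0$ and then $\varepsilon\to0$ gives $\lim_{t\to+0}\overline{\angle}_p(x,y_t)\le\angle_p(x,y)$, which completes the sandwich.

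The main obstacle is keeping the $O(\cdot)$ terms honest, in particular making sure that $s$ is fixed before $t\to0$. The square-root expansion $\sqrt{s^2-2stc+t^2}=s-tc+O(t^2/s)$ is uniform for $c\in[-1,1]$ once $s$ is fixed, so no further care is needed there. An alternative route to the upper bound is to apply the monotonicity of comparison angles at the vertex $p$ together with the triangle inequality for $\angle_p$. The explicit expansion above seems the most direct.
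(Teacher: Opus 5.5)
The paper gives no proof of this lemma; it is quoted from Bridson--Haefliger. So there is no in-paper argument to compare against.

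Your sandwich argument is correct and is essentially the standard first-variation proof:
\begin{itemize}
\item Monotonicity of comparison angles gives existence of the limit and the lower bound $\angle_p(x,y)\le\overline{\angle}_p(x,y_t)$.
\item For the upper bound, you route the triangle inequality through the intermediate point $\gamma_{p,x}(s)$ and combine it with the law of cosines \eqref{eq:character-comparison}. The error term is negligible because, for $0<t\le s/2$ and $c\in[-1,1]$,
\[
0\le\sqrt{s^2-2stc+t^2}-(s-tc)\le t^2/s .
\]
Once $s$ is fixed, this error is $o(t)$ against the first-order term $t$.
\end{itemize}
A merit of your write-up is that it uses only facts the paper already records: the monotonicity of comparison angles, \eqref{eq:ineq-Alex}, and \eqref{eq:character-comparison}.

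One piece of bookkeeping deserves a sentence. \eqref{eq:ineq-Alex} by itself only provides a single pair $(s,t_1)$ with $\overline{\angle}_p(\gamma_{p,x}(s),\gamma_{p,y}(t_1))\le\angle_p(x,y)+\varepsilon$. It is the monotonicity in $t$ that extends this to all $t\le t_1$ with the same $s$; alternatively you can use the double-limit definition of $\angle_p$. Also, set aside the case $\angle_p(x,y)+\varepsilon>\pi$ before you invoke that $\cos$ is decreasing, not after.

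Your closing remark about an alternative via monotonicity at $p$ and the triangle inequality for $\angle_p$ does not work as stated. Monotonicity at $p$ only bounds $\overline{\angle}_p(x,y_t)$ from below by comparison angles of nearer points, or from above by those of farther points. The triangle inequality for $\angle_p$ involves only Alexandrov angles. So some first-order distance estimate like yours is genuinely needed, but nothing in your proof depends on that remark.
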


Using Lemma~\ref{lem:FVF}, we can prove the following formula. 

\begin{lemma}\label{lem:relation-g_p-ip}
  Let $(X, \rho)$ be a $\CAT(0)$ space, 
 $p$ an element in $X$, and 
 both $\alpha \gamma_{p, x}$ and $\beta \gamma_{p, y}$ 
 elements in $T_pX$. 
 Then 
 \[
 g_p(\alpha \gamma_{p, x}, \beta \gamma_{p, y})
 = \lim_{\varepsilon \to +0} 
 \frac{\alpha \beta}{\varepsilon}
 \ip{\overrightarrow{px}}{\overrightarrow{p \bigl((1-\varepsilon)p\oplus \varepsilon y \bigr)}}. 
 \]
\end{lemma}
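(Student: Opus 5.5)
Write $y_\varepsilon=(1-\varepsilon)p\oplus\varepsilon y=\gamma_{p,y}(\varepsilon\rho(p,y))$ for $\varepsilon\in(0,1]$. The plan is to express the quasilinearization on the right through the comparison angle at $p$ between $x$ and $y_\varepsilon$. Then Lemma~\ref{lem:FVF} turns the limit of those comparison angles into the Alexandrov angle, and \eqref{eq:equality-g_p} identifies the result with $g_p$.

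First we dispose of the degenerate cases. If $\alpha\rho(p,x)=0$ or $\beta\rho(p,y)=0$, then $g_p(\alpha\gamma_{p,x},\beta\gamma_{p,y})=0$ by \eqref{eq:equality-g_p}. The right-hand side is also $0$, for the following reasons:
\begin{itemize}
 \item if $\alpha=0$ or $\beta=0$, the expression is identically $0$;
 \item if $x=p$, then $\ip{\overrightarrow{pp}}{\overrightarrow{pz}}=0$ for every $z$, directly from the definition of the quasilinearization;
 \item if $y=p$, then $y_\varepsilon=p$, and $\ip{\overrightarrow{px}}{\overrightarrow{pp}}=0$.
\end{itemize}
So the right-hand side vanishes for every $\varepsilon$ and the identity holds.

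Now assume $\alpha,\beta>0$ and $x,y\neq p$. Then $\rho(p,y_\varepsilon)=\varepsilon\rho(p,y)>0$ for $\varepsilon\in(0,1]$. By the definition of the quasilinearization and \eqref{eq:character-comparison},
\[
 \ip{\overrightarrow{px}}{\overrightarrow{py_\varepsilon}}
 =\tfrac12\bigl(\rho(p,x)^2+\rho(p,y_\varepsilon)^2-\rho(x,y_\varepsilon)^2\bigr)
 =\rho(p,x)\,\varepsilon\rho(p,y)\cos\overline{\angle}_p(x,y_\varepsilon).
\]
Hence
\[
 \frac{\alpha\beta}{\varepsilon}\ip{\overrightarrow{px}}{\overrightarrow{py_\varepsilon}}
 =\alpha\rho(p,x)\,\beta\rho(p,y)\cos\overline{\angle}_p\bigl(x,\gamma_{p,y}(\varepsilon\rho(p,y))\bigr).
\]

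As $\varepsilon\to+0$ we have $t=\varepsilon\rho(p,y)\to+0$. Lemma~\ref{lem:FVF} gives $\overline{\angle}_p(x,\gamma_{p,y}(t))\to\angle_p(x,y)$, and continuity of $\cos$ then shows that the right-hand side converges to
\[
 \alpha\rho(p,x)\,\beta\rho(p,y)\cos\angle_p(\gamma_{p,x},\gamma_{p,y}),
\]
which equals $g_p(\alpha\gamma_{p,x},\beta\gamma_{p,y})$ by \eqref{eq:equality-g_p}.

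The only substantive step is the passage to the limit, and it is supplied entirely by Lemma~\ref{lem:FVF}. One point needs checking there: that lemma is stated with $x$ fixed and only the second argument moving along $\gamma_{p,y}$, which matches exactly how $y_\varepsilon$ is parametrized here. The remaining care is bookkeeping in the degenerate cases, where the convention $\angle_p=\pi/2$ still gives $0$ through the factor $\alpha\rho(p,x)\beta\rho(p,y)$.
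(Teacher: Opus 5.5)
Your proof is correct and follows essentially the same route as the paper. Both dispose of the degenerate cases, where both sides vanish, and then use \eqref{eq:character-comparison} to rewrite $\frac{\alpha\beta}{\varepsilon}\ip{\overrightarrow{px}}{\overrightarrow{py_\varepsilon}}$ as $\alpha\rho(p,x)\beta\rho(p,y)\cos\overline{\angle}_p\bigl(x,\gamma_{p,y}(\varepsilon\rho(p,y))\bigr)$, passing to the limit via Lemma~\ref{lem:FVF} and continuity of $\cos$. The differences are cosmetic: the paper starts from $g_p$ and substitutes $\varepsilon=t/\rho(p,y)$ at the end, whereas you start from the quasilinearization, and you also treat the cases $\alpha=0$ or $\beta=0$ explicitly.
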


\begin{proof}
 If $p=x$ or $p=y$, then 
 \[
  g_p(\alpha \gamma_{p, x}, \beta \gamma_{p, y}) 
 =  \ip{\overrightarrow{px}}{\overrightarrow{p \bigl((1-\varepsilon)p\oplus \varepsilon y \bigr)}} = 0
 \]
 for all $\varepsilon \in (0,1]$ and hence 
 the equality holds. Suppose that 
 $p\neq x$ and $p\neq y$. 
 It then follows from Lemma~\ref{lem:FVF} that 
 \begin{align*}
  g_p\bigl(\alpha \gamma_{p, x}, \beta \gamma_{p, y}\bigr) 
  &= \alpha \rho(p, x)\beta \rho(p, y) \cos \angle_p(x,y) \\
  &= \alpha \rho(p, x) \beta \rho(p, y) \cos 
  \left(
  \lim_{t\to +0}\overline{\angle}_p\bigl(x, \gamma_{p, y}(t)\bigr)
  \right) \\
  &= \lim_{t\to +0}\alpha \rho(p, x)\beta \rho(p, y) \cos 
  \overline{\angle}_p\bigl(x, \gamma_{p, y}(t)\bigr) \\
  &= \lim_{t\to +0}
 \frac{\alpha \beta \rho(p, y)}{t}
 \rho(p, x)\rho\bigl(p, \gamma_{p, y}(t)\bigr) \cos 
  \overline{\angle}_p\bigl(x, \gamma_{p, y}(t) \bigr). 
 \end{align*} 
 On the other hand, 
 it follows from~\eqref{eq:character-comparison} that 
 \begin{align*}
  &\rho(p, x)\bigl(p, \gamma_{p, y}(t)\bigr) \cos 
  \overline{\angle}_p(x, \gamma_{p, y}(t)) \\
  &= \rho(p, x) \rho\bigl(p, \gamma_{p, y}(t)\bigr)
  \frac{\rho(p, x)^2 + \rho\bigl(p, \gamma_{p, y}(t)\bigr)^2 
 -\rho\bigl(x, \gamma_{p, y}(t)\bigr)^2}
 {2\rho(p, x)\rho\bigl(p, \gamma_{p, y}(t)\bigr)} \\
  &= \frac{1}{2} \Bigl(
 \rho(p, x)^2 + \rho\bigl(p, \gamma_{p, y}(t)\bigr)^2 
 -\rho\bigl(x, \gamma_{p, y}(t)\bigr)^2
 \Bigr) \\
  &= \ip{\overrightarrow{px}}
 {\overrightarrow{p\bigl(\gamma_{p,y}(t)\bigr)}} \\
  &= \ip{\overrightarrow{px}}{\overrightarrow{p\left(
 \left(1-\frac{t}{\rho(p,y)}\right) p \oplus \frac{t}{\rho(p, y)}y
 \right)}}
 \end{align*}
 for all $t\in (0,\rho(p, y)]$. 
 Thus we have 
 \begin{align*}
  g_p\bigl(\alpha \gamma_{p, x}, \beta \gamma_{p, y}\bigr) 
  &= \lim_{t\to +0} 
  \frac{\alpha \beta \rho(p, y)}{t}
  \ip{\overrightarrow{px}}{\overrightarrow{p\left(
 \left(1-\frac{t}{\rho(p,y)}\right) p \oplus \frac{t}{\rho(p, y)}y
 \right)}} \\
  &= \lim_{\varepsilon \to +0} 
 \frac{\alpha \beta}{\varepsilon}
 \ip{\overrightarrow{px}}
 {\overrightarrow{p 
 \bigl((1-\varepsilon)p\oplus \varepsilon y \bigr)}}.
 \end{align*}
 This completes the proof. 
\end{proof}

\begin{lemma}\label{lem:character-equiv-Y-H}
 Let $X$ be a nonempty convex subset of a real inner product space $H$, $p$ an element in $X$, 
 and both $\bigl(\alpha, \gamma_{p, x}\bigr)$ and 
 $\bigl(\beta, \gamma_{p, y}\bigr)$ elements 
 in $[0,\infty)\times \Gamma_p$. 
 Then the following hold. 
 \begin{enumerate}
  \item[(i)]
 $\tilde{d}_p\bigl((\alpha, \gamma_{p, x}), (\beta, \gamma_{p, y})\bigr) =\norm{\alpha(x-p)-\beta(y-p)}$; 
  \item[(ii)]
 $\bigl(\alpha, \gamma_{p, x}\bigr) \sim 
 \bigl(\beta, \gamma_{p, y}\bigr)$ if and only if 
 $\alpha(x-p)=\beta(y-p)$. 
 \end{enumerate}
\end{lemma}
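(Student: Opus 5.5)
The plan is to prove~(i) by a direct computation from the definition~\eqref{eq:def-d_p}, using~\eqref{eq:Alexandrov-Comparison-H} and~\eqref{eq:Comparison-H} to identify the Alexandrov angle with the Euclidean angle in $H$; then~(ii) follows immediately from~(i) and the definition~\eqref{eq:def-sim} of $\sim$.

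For~(i), we first dispose of the degenerate cases. If $x=p$ (or $\alpha=0$), then $\alpha\rho(p,x)=0$ and $\alpha(x-p)=0$. The formula~\eqref{eq:def-d_p} then gives
\[
 \tilde{d}_p=\beta\rho(p,y)=\norm{\beta(y-p)}=\norm{\alpha(x-p)-\beta(y-p)},
\]
regardless of the value of $\angle_p$ (which is $\pi/2$ or $0$ by convention, but it is multiplied by zero). The case $y=p$ or $\beta=0$ is symmetric. In the main case, $x\neq p$ and $y\neq p$. By~\eqref{eq:Alexandrov-Comparison-H} and~\eqref{eq:Comparison-H} we have
\[
 \cos\angle_p(\gamma_{p,x},\gamma_{p,y})=\frac{\ip{x-p}{y-p}}{\norm{x-p}\norm{y-p}}.
\]
Since $\rho(p,x)=\norm{x-p}$, substituting into~\eqref{eq:def-d_p} gives
\[
 \tilde{d}_p^2=\alpha^2\norm{x-p}^2+\beta^2\norm{y-p}^2-2\alpha\beta\ip{x-p}{y-p}=\norm{\alpha(x-p)-\beta(y-p)}^2 .
\]
Both sides are nonnegative, so taking square roots yields~(i).

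For~(ii), by~\eqref{eq:def-sim} we have $(\alpha,\gamma_{p,x})\sim(\beta,\gamma_{p,y})$ if and only if $\tilde{d}_p=0$. By~(i) this holds if and only if $\norm{\alpha(x-p)-\beta(y-p)}=0$, which is equivalent to $\alpha(x-p)=\beta(y-p)$ because $\norm{\cdot}$ is a norm.

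There is no real obstacle. The only point requiring care is the degenerate cases: there the convention that the angle equals $\pi/2$ (or $0$) must be checked not to affect the value, and it does not, since the corresponding length factor vanishes. We also note that~\eqref{eq:Alexandrov-Comparison-H} was derived for points of $X\setminus\{p\}$ in a convex subset of an inner product space, which is exactly our setting.
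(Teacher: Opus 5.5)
Your proposal is correct and follows essentially the same route as the paper: dispose of the cases $x=p$ and $y=p$ directly, use \eqref{eq:Comparison-H} and \eqref{eq:Alexandrov-Comparison-H} to replace $\cos\angle_p$ by the normalized inner product in the main case, expand to obtain $\norm{\alpha(x-p)-\beta(y-p)}^2$, and deduce~(ii) from~(i). Your extra degenerate cases $\alpha=0$ or $\beta=0$ are harmless but unnecessary, since the main-case computation already covers them whenever $x\neq p$ and $y\neq p$.
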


\begin{proof}
 We prove the part~(i). 
 If $x=p$, then we have 
 \[
 \tilde{d}_{p} 
 \bigl((\alpha, \gamma_{p, x}), (\beta, \gamma_{p, y})\bigr) 
 = \beta \norm{y-p} 
 =\norm{\alpha(x-p)-\beta(y-p)}. 
 \]
 If $y=p$, then we have 
 \[
 \tilde{d}_{p} 
 \bigl((\alpha, \gamma_{p, x}), (\beta, \gamma_{p, y})\bigr) 
 = \alpha \norm{x-p} 
 =\norm{\alpha(x-p)-\beta(y-p)}. 
 \]
 If $x\neq p$ and $y\neq p$, 
 then it follows from~\eqref{eq:Comparison-H} 
 and~\eqref{eq:Alexandrov-Comparison-H} that 
 \begin{align*}
  \angle_p (\gamma_{p,x}, \gamma_{p,y})
  =\overline{\angle}_p (\gamma_{p,x}, \gamma_{p,y}) 
  = \arccos \ip{
  \frac{x-p}{\norm{x-p}}
 }{
 \frac{y-p}{\norm{y-p}}
 }. 
 \end{align*}
 and hence 
 \begin{align*}
 &\tilde{d}_{p} 
 \bigl((\alpha, \gamma_{p, x}), (\beta, \gamma_{p, y})\bigr) \\
 &= \sqrt{
 \bigl(\alpha \norm{x-p}\bigr)^2 
 +\bigl(\beta \norm{y-p}\bigr)^2 
 -2\alpha \norm{x-p} \beta \norm{y-p} 
 \cos \angle_p (\gamma_{p,x}, \gamma_{p,y})
 } \\
 &= \sqrt{
 \bigl(\alpha \norm{x-p}\bigr)^2 
 +\bigl(\beta \norm{y-p}\bigr)^2 
 -2\alpha \norm{x-p} \beta \norm{y-p} 
 \ip{
  \frac{x-p}{\norm{x-p}}
 }{
 \frac{y-p}{\norm{y-p}}
 }
 } \\
 &= \sqrt{
 \bigl(\alpha \norm{x-p}\bigr)^2 
 +\bigl(\beta \norm{y-p}\bigr)^2 
 -2
 \ip{\alpha(x-p)}{\beta(y-p)}
 } \\ 
 &= \norm{\alpha(x-p)-\beta(y-p)}. 
 \end{align*} 
 Thus the part~(i) holds. The part~(ii) follows from the part~(i). 
\end{proof}

\begin{lemma}\label{lem:Tangent-space-H}
 Let $X$ be a nonempty convex subset of 
 a real inner product space $H$, $p$ an element in $X$, 
 $X_p$ the set defined by 
 \begin{align}\label{eq:def-X_p}
  X_p=\bigl\{\alpha (x-p): \alpha \geq 0, \, x\in X\bigr\}, 
 \end{align}
 and $\tau$ a mapping of $T_pX$ into $X_p$ defined by 
 \[
  \tau\bigl(\alpha \gamma_{p,x}\bigr) 
  = \alpha (x-p)
 \]
 for all $\alpha \gamma_{p,x}\in T_pX$. 
 Then the following hold. 
 \begin{enumerate}
  \item[(i)] $\tau$ is an isometry of $T_pX$ onto $X_p$; 
  \item[(ii)] $g_p(\alpha \gamma_{p,x}, \beta\gamma_{p,y}) 
 =\ip{\tau\bigl(\alpha \gamma_{p,x}\bigr)}
 {\tau\bigl(\beta \gamma_{p,y}\bigr) }$ 
  for all $\alpha \gamma_{p,x}, \beta \gamma_{p,y} \in T_pX$. 
 \end{enumerate}
 In particular, if $X_p=H$, then 
 $\tau$ is an isometry of $T_pX$ onto $H$. 
\end{lemma}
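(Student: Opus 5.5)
The plan is to reduce everything to Lemma~\ref{lem:character-equiv-Y-H}. The first task is to check that $\tau$ is well defined on equivalence classes. If $\alpha\gamma_{p,x}=\beta\gamma_{p,y}$ in $T_pX$, then $(\alpha,\gamma_{p,x})\sim(\beta,\gamma_{p,y})$. Part~(ii) of Lemma~\ref{lem:character-equiv-Y-H} then gives $\alpha(x-p)=\beta(y-p)$, so $\tau$ does not depend on the representative. The converse direction of the same part~(ii) shows that $\tau$ is injective. In a convex subset of $H$ every geodesic from $p$ is a segment $\gamma_{p,x}$ with $x\in X$, so each element of $T_pX$ really has the form $\alpha\gamma_{p,x}$. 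Then $\tau$ maps into $X_p$ by the definition \eqref{eq:def-X_p}.

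For the isometry in part~(i), part~(i) of Lemma~\ref{lem:character-equiv-Y-H} gives
\[
 d_p(\alpha\gamma_{p,x},\beta\gamma_{p,y})
 =\tilde{d}_p\bigl((\alpha,\gamma_{p,x}),(\beta,\gamma_{p,y})\bigr)
 =\norm{\alpha(x-p)-\beta(y-p)}
 =\norm{\tau(\alpha\gamma_{p,x})-\tau(\beta\gamma_{p,y})}.
\]
Surjectivity onto $X_p$ is immediate, because any $\alpha(x-p)\in X_p$ with $\alpha\ge0$ and $x\in X$ equals $\tau(\alpha\gamma_{p,x})$.

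For part~(ii), note that $0_p=[(0,\gamma_{p,p})]$ and $\tau(0_p)=0$. By the isometry just proved,
\[
 d_p(\alpha\gamma_{p,x},0_p)=\norm{\tau(\alpha\gamma_{p,x})}.
\]
Substituting this and the isometry into the definition \eqref{eq:def-g_p} of $g_p$ gives
\[
 g_p=\tfrac12\bigl(\norm{u}^2+\norm{v}^2-\norm{u-v}^2\bigr)=\ip{u}{v},
\]
where $u=\tau(\alpha\gamma_{p,x})$ and $v=\tau(\beta\gamma_{p,y})$; this is the polarization identity. The last assertion follows at once, since if $X_p=H$ then $\tau$ maps $T_pX$ isometrically onto $X_p=H$.

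There is no serious obstacle. The only delicate point is well-definedness together with the degenerate cases $x=p$ or $\alpha=0$. These are already handled by Lemma~\ref{lem:character-equiv-Y-H}, which covers all pairs in $[0,\infty)\times\Gamma_p$, including those cases.
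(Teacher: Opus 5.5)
Your proof is correct. For well-definedness, the isometry property, and surjectivity you argue exactly as the paper does: you use parts (ii) and (i) of Lemma~\ref{lem:character-equiv-Y-H} and the obvious preimage $\alpha\gamma_{p,x}$ of $\alpha(x-p)$. Your separate injectivity remark is redundant, since an isometry is automatically injective.

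Where you differ is part (ii). The paper goes back to the cosine formula \eqref{eq:equality-g_p}. It inserts \eqref{eq:Comparison-H} and \eqref{eq:Alexandrov-Comparison-H} to replace $\cos\angle_p(\gamma_{p,x},\gamma_{p,y})$ by the normalized inner product, and treats $x=p$ or $y=p$ as a separate case. You instead use the definition \eqref{eq:def-g_p} of $g_p$ as a quasilinearization based at $0_p$. Together with $\tau(0_p)=0$ and the isometry from part (i), this makes (ii) just the polarization identity in $H$.

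Your route is shorter and needs no case split. It also shows that (ii) is a formal consequence of (i); indeed it holds for any isometry of $T_pX$ into $H$ sending $0_p$ to $0$. The paper's route essentially repeats the angle computation already done in the proof of Lemma~\ref{lem:character-equiv-Y-H}(i), but it keeps the link with the cosine expression \eqref{eq:equality-g_p} explicit.
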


\begin{proof}
 The part~(ii) in Lemma~\ref{lem:character-equiv-Y-H} 
 implies that $\tau$ is well defined. 
 The part~(i) in Lemma~\ref{lem:character-equiv-Y-H} 
 implies that $\tau$ is an isometry. 
 If $z\in X_p$, then there exist 
 $\alpha \geq 0$ and $x\in X$ such that 
 $z=\alpha (x-p)$. This implies that 
 \[
  \tau \bigl(\alpha \gamma_{p, x}\bigr)
  = \alpha (x -p) = z. 
 \]
 Thus $\tau$ is surjective. 

 Let $\alpha \gamma_{p,x}, \beta \gamma_{p,y} \in T_pX$ be given. 
 If $x\neq p$ and $y\neq p$, 
 then it follows from~\eqref{eq:Comparison-H} 
 and~\eqref{eq:Alexandrov-Comparison-H} that 
 \begin{align*}
 g_p\bigl(\alpha \gamma_{p,x}, \beta \gamma_{p,y}\bigr) 
  &=\alpha \norm{x-p}\beta \norm{y-p} 
 \cos \angle_{p}\bigl(\gamma_{p,x}, \gamma_{p,y}\bigr) \\
  &=\alpha \norm{x-p}\beta \norm{y-p} 
 \cos \overline{\angle}_{p}\bigl(\gamma_{p,x}, \gamma_{p,y}\bigr) \\
  &=\alpha \norm{x-p}\beta \norm{y-p} 
  \ip{
 \frac{x-p}{\norm{x-p}}
 }{
  \frac{y-p}{\norm{y-p}}
 } \\
%  &=\alpha \beta\ip{x-p}{y-p} \\
  &=\ip{\alpha (x-p)}{\beta(y-p)}. 
 \end{align*}
 If $x=p$ or $y=p$, 
 then we have 
 \[
  \alpha \norm{x-p} = 0 \quad \textrm{or} \quad 
  \beta \norm{y-p}=0. 
 \]
 In this case, we have 
 \[
  g_p\bigl(\alpha \gamma_{p,x}, \beta \gamma_{p,y}\bigr) 
  = 0 
  = \ip{\alpha (x-p)}{\beta (y-p)}. 
 \]
 This completes the proof. 
\end{proof}

\begin{corollary}\label{cor:Tangent-space-H}
 If $X$ is a nonempty convex subset of a real inner product space $H$ 
 and $p$ is an interior point in $X$, then 
 $T_pX$ is isomorphic to $H$. 
\end{corollary}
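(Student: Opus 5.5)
The plan is to reduce everything to Lemma~\ref{lem:Tangent-space-H}, which already gives an isometry $\tau$ of $T_pX$ onto the cone $X_p$ defined in~\eqref{eq:def-X_p}, with $g_p$ carried onto the inner product of $H$. By its final assertion, all that remains is to show $X_p=H$ when $p$ is an interior point of $X$. ``Isomorphic'' is then read in the sense the lemma provides: $\tau$ is a bijective isometry onto $H$ which carries $g_p$ onto $\ip{\,\cdot\,}{\,\cdot\,}$. Consequently, $\tau(\lambda \alpha\gamma)=\lambda\tau(\alpha\gamma)$ for $\lambda\geq 0$, and the vertex satisfies $\tau(0_p)=0$.

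To show $X_p=H$, I would use interiority. There is $r>0$ such that the open ball of radius $r$ centred at $p$ in $H$ lies in $X$. Take any $z\in H$. If $z=0$, write $z=0\cdot(p-p)\in X_p$. If $z\neq 0$, put
\[
 x=p+\frac{r}{2\norm{z}}z .
\]
Then $\norm{x-p}=r/2<r$, so $x\in X$, and
\[
 z=\frac{2\norm{z}}{r}(x-p)\in X_p .
\]
Hence $X_p=H$.

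Putting these together, $\tau\colon T_pX\to H$ is a surjective isometry with
\[
 g_p(u,v)=\ip{\tau u}{\tau v}
\]
for all $u,v\in T_pX$, by part~(ii) of Lemma~\ref{lem:Tangent-space-H}. Positive homogeneity follows directly from $\tau(\lambda\alpha\gamma_{p,x})=\lambda\alpha(x-p)$. So $T_pX$ is identified with $H$ together with its metric, its conic structure, and its inner product via $g_p$.

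I do not expect a real obstacle, since the substantive work is in Lemmas~\ref{lem:character-equiv-Y-H} and~\ref{lem:Tangent-space-H}. The only points needing care are stating precisely what ``isomorphic'' means here, and noting that convexity of $X$ is what makes the geodesics $\gamma_{p,x}$ the straight segments used there. Convexity is not needed for the scaling argument itself.
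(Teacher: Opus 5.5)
Your proposal is correct and follows the paper's proof: the paper likewise notes that interiority of $p$ gives $X_p=H$ and then invokes Lemma~\ref{lem:Tangent-space-H}. Your explicit scaling argument simply writes out the justification of $X_p=H$ that the paper states without detail.
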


\begin{proof}
 Since $p$ is an interior point in $X$, 
 the set $X_p$ defined by~\eqref{eq:def-X_p} 
 is equal to $H$. 
 Thus Lemma~\ref{lem:Tangent-space-H} implies the conclusion. 
\end{proof}

\section{Monotone vector fields and subdifferentials}

In this section, we study some fundamental properties of 
monotone vector fields and subdifferential mappings in $\CAT(0)$ spaces. 

Let $(X, \rho)$ be a $\CAT(0)$ space. 
The tangent bundle $TX$ of $X$ is defined by 
$TX=\bigcup_{x\in X}T_xX$. 
The domain $\D(A)$ of a mapping 
$A\colon X\to 2^{TX}$ is defined by 
\[
 \D(A)=\{x\in X: Ax\neq \emptyset\}. 
\]
A mapping $A\colon X \to 2^{TX}$ is said to be 
a vector field on $X$ if 
\[
 Ax \subset T_xX
\]
for all $x\in X$. A vector field $A\colon X\to 2^{TX}$ 
is said to be monotone if 
\[
 g_x(x^*, \gamma_{x, y}) 
 + 
 g_y(y^*, \gamma_{y, x}) \leq 0
\]
whenever $(x, x^*), (y, y^*) \in \Gra(A)$, where 
$\Gra (A)$ is the graph of $A$ defined by 
\[
 \Gra(A) = \bigl\{(x, x^*)\in X\times TX: x^*\in Ax\bigr\}. 
\]
A monotone vector field $A\colon X\to 2^{TX}$ is said to satisfy 
the surjectivity condition if for each $\lambda>0$ and $x\in X$, 
there exists $z\in X$ such that 
\[
 \frac{1}{\lambda} \gamma_{z, x} \in Az. 
\]
In this case, the resolvent $J_{\lambda}$ of $A$ 
with respect to $\lambda>0$ 
is defined by 
\[
 J_{\lambda}x = \left\{z\in X: \frac{1}{\lambda} \gamma_{z, x} \in Az\right\}
\]
for all $x\in X$. We denote by $\Zer (A)$ the set defined by 
\[
 \Zer (A) = \{u\in X: 0_u\in Au\}.  
\]
Each point in $\Zer (A)$ is called a zero point of $A$. 

If $X$ is particularly a real inner product space, then 
$T_pX$ is isomorpic to $X$ by Corollary~\ref{cor:Tangent-space-H} 
for all $p\in X$. 
Let $\tau_p$ be the isometry of $T_pX$ onto $X$ given by 
\[
 \tau_p\bigl(\alpha \gamma_{p,x}\bigr) = \alpha (x-p)
\]
for all $p\in X$ and $\alpha \gamma_{p, x}\in T_pX$. 
Let $A\colon X\to 2^{TX}$ be a vector field. 
Then we define $\tilde{A}\colon X\to 2^{X}$ by 
\[
 \tilde{A}x = \{\tau_x(x^*): x^* \in Ax\}
\]
for all $x\in X$. Then we can prove that 
$A$ is monotone if and only if $\tilde{A}$ is monotone. 
In fact, suppose that $\tilde{A}$ is monotone and let 
$(x,x^*), (y, y^*)\in \Gra(A)$ be given. 
Since $x^*\in T_xX$ and $y^*\in T_yX$, we have 
$\alpha, \beta \geq 0$ and $u, v\in X$ such that 
$x^*=\alpha \gamma_{x, u}$ and $y^*=\beta \gamma_{y, v}$. 
Then it follows from Lemma~\ref{lem:Tangent-space-H} 
and the monotonicity of $\tilde{A}$ that 
\begin{align*}
 g_{x}(x^*, \gamma_{x,y}) 
 + g_{y}(y^*, \gamma_{y,x}) 
 &= g_{x}(\alpha \gamma_{x, u}, \gamma_{x,y}) 
 + g_{y}(\beta\gamma_{y, v}, \gamma_{y,x}) \\
 &= \ip{\alpha(u-x)}{y-x} + \ip{\beta(v-y)}{x-y} \\
 &= \ip{\tau_x(x^*)}{y-x} + \ip{\tau_y(y^*)}{x-y} \\
 &= -\ip{x-y}{\tau_x(x^*)-\tau_y(y^*)} \\
 &\leq 0. 
\end{align*}
Thus $A$ is monotone. On the other hand, 
suppose that $A$ is monotone and 
let $(x,z), (y, w)\in \Gra(\tilde{A})$ be given. 
Then we have $x^*=\alpha\gamma_{x, u}\in Ax$ 
and $y^*=\beta \gamma_{y, v}\in Ay$ such that 
$\tau_x(x^*)=z$ and $\tau_y(y^*)=w$. 
By Lemma~\ref{lem:Tangent-space-H} 
and the monotonicity of $A$, we have 
\begin{align*}
 \ip{x-y}{z-w} 
 &= \ip{x-y}{\tau_x(x^*)-\tau_y(y^*)} \\
 &= \ip{x-y}{\alpha (u-x)-\beta(v-y)} \\
 &= -\ip{y-x}{\alpha (u-x)} + \ip{y-x}{\beta(v-y)} \\
 &= - \bigl(g_x(x^*, \gamma_{x, y}) + g_y(y^*, \gamma_{y, x}) \bigr)
 \geq 0. 
\end{align*}
Thus $\tilde{A}$ is monotone. 

Suppose that $X$ is a real inner product space and 
$A\colon X\to 2^{TX}$ is a monotone vector field. 
Then $A$ satisfies the surjectivity condition 
if and only if for each $\lambda > 0$ and $x\in X$, 
there exists $z\in X$ such that 
\[
 x\in z+ \lambda \tilde{A} z. 
\]
In fact, if $\lambda>0$ and $x, z\in X$, then we have 
\begin{align*}
 \frac{1}{\lambda} \gamma_{z, x} \in Az 
 \Longleftrightarrow 
 \tau_{z}\left(\frac{1}{\lambda} \gamma_{z, x}\right) \in \tilde{A}z 
 \Longleftrightarrow 
 \frac{1}{\lambda}(x-z) \in \tilde{A}z 
 \Longleftrightarrow 
 x\in z + \lambda \tilde{A}z. 
\end{align*}
In this case, we can also prove that 
\[
 J_{\lambda} x = 
 \left\{
 z\in X: x\in z+ \lambda \tilde{A}z
 \right\}
\]
for all $\lambda >0$ and $x\in X$. 

We prove some fundamental properties 
of resolvents of monotone vector fields. 

\begin{lemma}\label{lem:res-fund}
 Let $(X, \rho)$ be a $\CAT(0)$ space, 
 $A\colon X\to 2^{TX}$ a monotone vector field 
 satisfying the surjectivity condition, and 
 $J_{\lambda}$ the resolvent of $A$ 
 with respect to $\lambda>0$.  
 Then the following hold. 
 \begin{enumerate}
  \item[(i)] $J_{\lambda}\colon X\to X$ 
 is a single-valued mapping for all $\lambda>0$; 
  \item[(ii)] $\Fix (J_{\lambda})=\Zer (A)$ for all $\lambda>0$. 
  \item[(iii)] if $0<\mu \leq \lambda$ and $x\in X$, then 
 \[
  J_{\mu}\Bigl(\left(1-\frac{\mu}{\lambda}\right)J_{\lambda}x 
  \oplus \frac{\mu}{\lambda}x\Bigr) = J_{\lambda}x; 
 \] 
  \item[(iv)] $J_{\lambda}$ is firmly metrically nonspreading 
 for all $\lambda>0$;
  \item[(v)] $J_{\lambda}$ is firmly nonexpansive for all $\lambda>0$. 
 \end{enumerate}
\end{lemma}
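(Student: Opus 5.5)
The basic tool is a single inequality, obtained from monotonicity together with Lemma~\ref{lem:g_p-ql}; most parts then follow from it. Let $\lambda,\mu>0$, $x,y\in X$, and take $z\in J_\lambda x$, $w\in J_\mu y$. Then $\lambda^{-1}\gamma_{z,x}\in Az$ and $\mu^{-1}\gamma_{w,y}\in Aw$, so monotonicity gives
\[
 \frac{1}{\lambda} g_z(\gamma_{z,x},\gamma_{z,w}) + \frac{1}{\mu} g_w(\gamma_{w,y},\gamma_{w,z}) \leq 0 .
\]
By Lemma~\ref{lem:g_p-ql}, each $g$ term dominates the corresponding quasilinearization. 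This yields the key inequality
\[
 \frac{1}{\lambda}\ip{\overrightarrow{zx}}{\overrightarrow{zw}} + \frac{1}{\mu}\ip{\overrightarrow{wy}}{\overrightarrow{wz}} \leq 0 .
\]

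\textbf{Parts (i) and (iv).} Take $\lambda=\mu$. Expanding by the definition of quasilinearization and combining the two terms gives
\[
 2\rho(z,w)^2+\rho(z,x)^2+\rho(w,y)^2\le \rho(z,y)^2+\rho(w,x)^2 .
\]
This is exactly the equivalent form of firm metric nonspreadingness, which proves (iv) once single-valuedness is known. For (i), take $x=y$ and two points $z,w\in J_\lambda x$. The inequality then reads
\[
 2\rho(z,w)^2+\rho(z,x)^2+\rho(w,x)^2\le\rho(z,x)^2+\rho(w,x)^2,
\]
so $z=w$. Nonemptiness of $J_\lambda x$ is the surjectivity condition.

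\textbf{Part (ii).} Here $u\in\Fix(J_\lambda)$ means $\lambda^{-1}\gamma_{u,u}\in Au$. Since $\gamma_{u,u}$ has length $0$, we have $\lambda^{-1}\gamma_{u,u}=0_u$ by Lemma~\ref{lem:character-equiv-Y}(i). Hence this condition is exactly $0_u\in Au$.

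\textbf{Part (iii).} Set $z=J_\lambda x$ and $y=(1-\mu/\lambda)z\oplus(\mu/\lambda)x$. I will show that $\mu^{-1}\gamma_{z,y}=\lambda^{-1}\gamma_{z,x}$ as elements of $T_zX$; then $z\in J_\mu y$, and (i) gives $J_\mu y=z$. The geodesic $\gamma_{z,y}$ is the restriction of $\gamma_{z,x}$ to $[0,\rho(z,y)]$, where $\rho(z,y)=(\mu/\lambda)\rho(z,x)$. Consequently $\angle_z(\gamma_{z,y},\gamma_{z,x})=0$, since comparison angles along a single geodesic vanish. The lengths also agree: $\mu^{-1}\rho(z,y)=\lambda^{-1}\rho(z,x)$. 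Lemma~\ref{lem:character-equiv-Y} then gives the equivalence. The degenerate case $z=x$ falls under Lemma~\ref{lem:character-equiv-Y}(i).

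\textbf{Part (v).} This is the main obstacle: show
\[
 \rho(z,w)\le\rho(z_\alpha,w_\alpha),\qquad z_\alpha=(1-\alpha)z\oplus\alpha x,\quad w_\alpha=(1-\alpha)w\oplus\alpha y,
\]
where $z=J_\lambda x$, $w=J_\lambda y$, and $\alpha\in[0,1]$. By the same geodesic-restriction argument as in (iii), $z_\alpha$ lies on $[z,x]$, so $\gamma_{z,z_\alpha}$ is the restriction of $\gamma_{z,x}$. Hence $\gamma_{z,z_\alpha}$ and $\gamma_{z,x}$ define the same direction, and $\alpha^{-1}\gamma_{z,z_\alpha}$ (equivalently, a suitable positive multiple) lies in $Az$. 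The same holds at $w$. Applying the key inequality with $z_\alpha,w_\alpha$ in place of $x,y$ and the same scaling constants gives
\[
 \ip{\overrightarrow{zz_\alpha}}{\overrightarrow{zw}}+\ip{\overrightarrow{ww_\alpha}}{\overrightarrow{wz}}\le 0
\]
for $\alpha\in(0,1]$; the case $\alpha=0$ is trivial. Rearranged, this says
\[
 \rho(z,w)^2\le\ip{\overrightarrow{zw}}{\overrightarrow{z_\alpha w_\alpha}} .
\]
To see this, use the four-point identity
\[
 \ip{\overrightarrow{zw}}{\overrightarrow{z_\alpha w_\alpha}}=\ip{\overrightarrow{zw}}{\overrightarrow{z_\alpha z}}+\ip{\overrightarrow{zw}}{\overrightarrow{zw}}+\ip{\overrightarrow{zw}}{\overrightarrow{ww_\alpha}},
\]
which holds by additivity of quasilinearization in each argument. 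Lemma~\ref{lem:CS-inequality} then gives $\rho(z,w)^2\le\rho(z,w)\rho(z_\alpha,w_\alpha)$, and dividing by $\rho(z,w)$ when it is nonzero finishes (v).
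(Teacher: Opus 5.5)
Your proposal is correct and follows the paper's proof: (i), (ii) and (iv) come from the same monotonicity-plus-Lemma~\ref{lem:g_p-ql} inequality, (iii) uses Lemma~\ref{lem:character-equiv-Y} exactly as the paper does, and your (v) is the paper's argument written out inline, namely part (iii) with $\mu=\alpha\lambda$ followed by nonexpansiveness of $J_{\alpha\lambda}$, which comes from firm metric nonspreadingness and Lemma~\ref{lem:CS-inequality}. One small slip is that $\alpha^{-1}\gamma_{z,z_\alpha}$ equals $\gamma_{z,x}$ in $T_zX$, so the element of $Az$ is $(\alpha\lambda)^{-1}\gamma_{z,z_\alpha}$; this is harmless because the same constant appears at $w$ and cancels.
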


\begin{proof}
 We first prove the part~(i). 
 Let $x, z_1, z_2\in X$ and $\lambda>0$ satisfy 
 \[
  \frac{1}{\lambda}\gamma_{z_1, x} \in Az_1 
  \quad \textrm{and} \quad 
  \frac{1}{\lambda}\gamma_{z_2, x} \in Az_2. 
 \]
 Since $A$ is monotone, we have from Lemma~\ref{lem:g_p-ql} that 
 \begin{align*}
  0 
  &\geq g_{z_1}\left(\lambda^{-1}\gamma_{z_1, x}, \gamma_{z_1, z_2}\right) 
  + g_{z_2}\left(\lambda^{-1}\gamma_{z_2, x}, \gamma_{z_2, z_1}\right) \\
  &= \frac{1}{\lambda} \bigl(
  g_{z_1}\left(\gamma_{z_1, x}, \gamma_{z_1, z_2}\right) 
  + g_{z_2}\left(\gamma_{z_2, x}, \gamma_{z_2, z_1}\right)
  \bigr) \\
  &\geq \frac{1}{\lambda} \bigl(
  \ip{\overrightarrow{z_1x}}{\overrightarrow{z_1z_2}}
  + \ip{\overrightarrow{z_2x}}{\overrightarrow{z_2z_1}}
  \bigr) \\
  &= \frac{1}{\lambda} \bigl(
  \ip{\overrightarrow{z_1x}}{\overrightarrow{z_1z_2}}
  + \ip{\overrightarrow{xz_2}}{\overrightarrow{z_1z_2}}
  \bigr) 
  = \frac{1}{\lambda} \ip{\overrightarrow{z_1z_2}}{\overrightarrow{z_1z_2}}
  =\frac{1}{\lambda}\rho(z_1, z_2)^2 
 \end{align*}
 and hence $z_1=z_2$. Thus the part~(i) holds. 

 If $\lambda>0$ and $u\in X$, then 
 \[
  u \in \Fix (J_{\lambda}) 
\Longleftrightarrow J_{\lambda}u=u \Longleftrightarrow \frac{1}{\lambda}\gamma_{u, u} \in Au
\Longleftrightarrow 0_u \in Au \Longleftrightarrow u\in \Zer (A). 
 \]
 Thus the part~(ii) follows. 

 We next prove the part~(iii). 
 Let $x\in X$ be given and suppose that 
 $0<\mu\leq \lambda$. Since the case where 
 $\mu=\lambda$ is trivial, we may suppose that $\mu<\lambda$. 
 Set 
 \[
  z=\left(1-\frac{\mu}{\lambda}\right) J_{\lambda}x \oplus \frac{\mu}{\lambda}x. 
 \]
 If $J_{\lambda}x=x$, then $z=x\in \Zer (A)$ and hence $J_{\mu}z=z=x=J_{\lambda}x$. 
 If $J_{\lambda}x \neq x$, then 
 \[
  \frac{1}{\mu}\rho(z, J_{\lambda}x)
  = \frac{1}{\mu} \cdot \frac{\mu}{\lambda} \rho(x, J_{\lambda}x)
  = \frac{1}{\lambda} \rho(x, J_{\lambda}x)>0.  
 \]
 It follows from~\eqref{eq:character-comparison} that 
 \begin{align*}
 & \overline{\angle}_{J_{\lambda}x} (\gamma_{J_{\lambda}x, x}, 
 \gamma_{J_{\lambda}x, z}) \\
 &= \arccos 
 \frac{\rho(J_{\lambda}x, x)^2 + \rho(J_{\lambda}x, z)^2 - \rho(x,z)^2}
 {2\rho(J_{\lambda}x, x)\rho(J_{\lambda}x, z)} \\
 &= \arccos 
 \frac{\rho(J_{\lambda}x, x)^2 + \frac{\mu^2}{\lambda^2}\rho(J_{\lambda}x, x)^2 
- \left(1-\frac{\mu}{\lambda}\right)^2\rho(x,J_{\lambda}x)^2}
{\frac{2\mu}{\lambda}\rho(x, J_{\lambda}x)^2}\\
 &= \arccos 
 \frac{\bigl(1+\frac{\mu^2}{\lambda^2} 
 -(1-\frac{\mu}{\lambda})^2\bigr)\rho(x,J_{\lambda}x)^2}
 {\frac{2\mu}{\lambda}\rho(x, J_{\lambda}x)^2} = \arccos 1 = 0. 
 \end{align*}
 Thus we have 
 \[
 0\leq 
 \angle_{J_{\lambda}x} (\gamma_{J_{\lambda}x, x}, 
 \gamma_{J_{\lambda}x, z})
 \leq \overline{\angle}_{J_{\lambda}x} (\gamma_{J_{\lambda}x, x}, 
 \gamma_{J_{\lambda}x, z}) =0
 \]
 and hence $\angle_{J_{\lambda}x} (\gamma_{J_{\lambda}x, x}, 
 \gamma_{J_{\lambda}x, z})=0$.  
 It then follows from Lemma~\ref{lem:character-equiv-Y} that 
 \[
  \frac{1}{\mu}\gamma_{J_{\lambda}x, z} 
 = \frac{1}{\lambda}\gamma_{J_{\lambda}x, x} 
 \]
 and hence 
 \[
  \frac{1}{\mu}\gamma_{J_{\lambda}x, z} \in AJ_{\lambda}x. 
 \]
 Consequently, the part~(i) implies that $J_{\mu}z=J_{\lambda}x$. 

 We next prove the part~(iv). Let $\lambda>0$ and $x,y\in X$ be given. 
 By the definition of $J_{\lambda}$, we have 
 \[
  \frac{1}{\lambda}\gamma_{J_{\lambda}x, x} \in AJ_{\lambda}x 
  \quad \textrm{and} \quad 
  \frac{1}{\lambda}\gamma_{J_{\lambda}y, y} \in AJ_{\lambda}y.  
 \]
 The monotonicity of $A$ and Lemma~\ref{lem:g_p-ql} imply that 
 \begin{align*}
  0
 &\geq g_{J_{\lambda}x}\left(
  \frac{1}{\lambda}\gamma_{J_{\lambda}x, x}, \gamma_{J_{\lambda}x, J_{\lambda}y}
 \right)
  + g_{J_{\lambda}y}\left(
  \frac{1}{\lambda}\gamma_{J_{\lambda}y, y}, \gamma_{J_{\lambda}y, J_{\lambda}x}
 \right) \\
 &= \frac{1}{\lambda} \bigl(g_{J_{\lambda}x}\left(
  \gamma_{J_{\lambda}x, x}, \gamma_{J_{\lambda}x, J_{\lambda}y}
 \right)
  + g_{J_{\lambda}y}\left(
  \gamma_{J_{\lambda}y, y}, \gamma_{J_{\lambda}y, J_{\lambda}x}
 \right)
  \bigr) \\
 &\geq \frac{1}{\lambda} \left(
   \ip{\overrightarrow{(J_{\lambda}x)x}}{\overrightarrow{(J_{\lambda}x)(J_{\lambda}y)}}
 + \ip{\overrightarrow{(J_{\lambda}y)y}}{\overrightarrow{(J_{\lambda}y)(J_{\lambda}x)}} 
  \right) 
 \end{align*}
 and hence 
 \[
  \ip{\overrightarrow{(J_{\lambda}x)x}}{\overrightarrow{(J_{\lambda}x)(J_{\lambda}y)}}
  \leq \ip{\overrightarrow{(J_{\lambda}y)y}}{\overrightarrow{(J_{\lambda}x)(J_{\lambda}y)}}. 
 \]
 This implies that 
 \[
  \rho(J_{\lambda}x, J_{\lambda}y)^2 + \rho(x, J_{\lambda}x)^2 - \rho(x, J_{\lambda}y)^2 
  \leq 
  \rho(y, J_{\lambda}x)^2 - \rho(J_{\lambda}y, J_{\lambda}x)^2 - \rho(y, J_{\lambda}y)^2
 \]
 and hence the part~(iv) holds. 

 We finally prove the part~(v). Let $\lambda>0$ and $x,y\in X$ be given. 
 It follows from Lemma~\ref{lem:CS-inequality} and the part~(iv) 
 that 
 \[
  \rho(J_{\lambda}x, J_{\lambda}y)^2
  \leq \ip{\overrightarrow{(J_{\lambda}x)(J_{\lambda}y)}}
  {\overrightarrow{xy}} 
  \leq \rho(J_{\lambda}x, J_{\lambda}y) \rho(x, y). 
 \]
 Thus $J_{\lambda}$ is nonexpansive. 
 We next prove that $J_{\lambda}$ is firmly nonexpansive. 
 Let $x,y\in X$ and $\alpha\in [0,1]$ be given. 
 We may assume that $\alpha \in (0,1)$. 
 Setting $\mu=\alpha \lambda$, we have 
 from the part~(iii) and the nonexpansiveness of $J_{\lambda}$ that 
 \begin{align*}
  \rho(J_{\lambda}x, J_{\lambda}y) 
  &=\rho\Bigl(
 J_{\mu}\left(\left(1-\frac{\mu}{\lambda}\right)J_{\lambda}x 
 \oplus \frac{\mu}{\lambda}x\right), 
 J_{\mu}\left(\left(1-\frac{\mu}{\lambda}\right)J_{\lambda}y 
 \oplus \frac{\mu}{\lambda}y\right)
 \Bigr) \\
  &\leq \rho\Bigl(\left(1-\frac{\mu}{\lambda}\right)J_{\lambda}x 
 \oplus \frac{\mu}{\lambda}x, 
 \left(1-\frac{\mu}{\lambda}\right)J_{\lambda}y 
 \oplus \frac{\mu}{\lambda}y
 \Bigr) \\
  &= \rho\bigl(
(1-\alpha)J_{\lambda}x \oplus \alpha x, 
(1-\alpha)J_{\lambda}y \oplus \alpha y
\bigr). 
 \end{align*}
 Thus $J_{\lambda}$ is firmly nonexpansive. 
\end{proof}

\begin{lemma}\label{lem:res-ineq}
 Let $(X, \rho)$ be a $\CAT(0)$ space, 
 $A\colon X\to 2^{TX}$ a monotone vector field 
 satisfying the surjectivity condition, 
 and $J_{\lambda}$ the resolvent of $A$ with respect to $\lambda >0$. 
 Then the following hold. 
 \begin{enumerate}
  \item[(i)] The inequality 
 \begin{align*}
  &(\lambda + \mu) \rho(J_{\lambda}x, J_{\mu}y)^2 \\
  &\leq \lambda \rho(J_{\lambda}x, y)^2 + \mu \rho(J_{\mu}y, x)^2 
  - \mu \rho(J_{\lambda}x, x)^2 - \lambda \rho(J_{\mu}y, y)^2 
 \end{align*}
  holds for all $\lambda, \mu > 0$ and $x,y\in X$; 
  \item[(ii)] the inequality 
 \begin{align*}
  \frac{1}{\mu} \rho(J_{\mu}J_{\lambda}x, J_{\lambda}x) 
  \leq \frac{1}{\lambda} \rho(J_{\lambda}x, x)
 \end{align*}
  holds for all $\lambda, \mu > 0$ and $x \in X$. 
 \end{enumerate}
\end{lemma}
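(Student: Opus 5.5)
Part~(i) follows the pattern of the proof of Lemma~\ref{lem:res-fund}(iv), with two different parameters. Fix $\lambda,\mu>0$ and $x,y\in X$, and write $u=J_{\lambda}x$ and $v=J_{\mu}y$. By definition of the resolvent, $\lambda^{-1}\gamma_{u,x}\in Au$ and $\mu^{-1}\gamma_{v,y}\in Av$.

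Monotonicity of $A$ and the positive homogeneity of $g_p$ give
\[
0\geq \frac{1}{\lambda}g_u(\gamma_{u,x},\gamma_{u,v})+\frac{1}{\mu}g_v(\gamma_{v,y},\gamma_{v,u}).
\]
Applying Lemma~\ref{lem:g_p-ql} to each term bounds the right-hand side from below by
\[
\frac{1}{\lambda}\ip{\overrightarrow{ux}}{\overrightarrow{uv}}+\frac{1}{\mu}\ip{\overrightarrow{vy}}{\overrightarrow{vu}}.
\]
Multiplying by $\lambda\mu>0$ yields
\[
\mu\ip{\overrightarrow{ux}}{\overrightarrow{uv}}+\lambda\ip{\overrightarrow{vy}}{\overrightarrow{vu}}\leq 0.
\]
Expanding the quasilinearizations gives
\[
2\ip{\overrightarrow{ux}}{\overrightarrow{uv}}=\rho(u,v)^2+\rho(x,u)^2-\rho(x,v)^2,\qquad 2\ip{\overrightarrow{vy}}{\overrightarrow{vu}}=\rho(v,u)^2+\rho(y,v)^2-\rho(y,u)^2.
\]
Substituting these and rearranging gives exactly
\[
(\lambda+\mu)\rho(u,v)^2\leq \lambda\rho(u,y)^2+\mu\rho(v,x)^2-\mu\rho(u,x)^2-\lambda\rho(v,y)^2,
\]
which is~(i). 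The only care needed is in tracking which parameter multiplies which term.

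For part~(ii), apply~(i) with $y=u=J_{\lambda}x$, and write $w=J_{\mu}u$. Since $\rho(J_{\lambda}x,y)=\rho(u,u)=0$, the inequality becomes
\[
(\lambda+\mu)\rho(u,w)^2\leq \mu\rho(w,x)^2-\mu\rho(u,x)^2-\lambda\rho(w,u)^2,
\]
that is,
\[
(2\lambda+\mu)\rho(u,w)^2+\mu\rho(u,x)^2\leq \mu\rho(w,x)^2.
\]
The triangle inequality gives $\rho(w,x)\leq \rho(w,u)+\rho(u,x)$. Set $a=\rho(u,w)$ and $b=\rho(u,x)$; then $\rho(w,x)^2\leq a^2+2ab+b^2$. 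Inserting this,
\[
(2\lambda+\mu)a^2+\mu b^2\leq \mu a^2+2\mu ab+\mu b^2,
\]
so $2\lambda a^2\leq 2\mu ab$. If $a=0$ the claim is trivial; otherwise dividing by $2\lambda\mu a$ gives $a/\mu\leq b/\lambda$, which is~(ii).

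No step is a real obstacle; the only point requiring attention is choosing the right substitution in~(i) and using the triangle inequality (rather than a $\CAT(0)$ estimate) to close~(ii).
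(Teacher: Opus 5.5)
Your proof is correct and follows the paper's argument. Part~(i) is the same monotonicity-plus-Lemma~\ref{lem:g_p-ql} computation, and part~(ii) uses the same substitution $y=J_{\lambda}x$. The only difference is cosmetic: the paper rewrites $\frac{1}{2}\bigl(\rho(w,x)^2-\rho(u,x)^2-\rho(u,w)^2\bigr)$ as a quasilinearization and bounds it by Lemma~\ref{lem:CS-inequality}, whereas you bound it directly with the triangle inequality. That is exactly the metric-space estimate the paper proves earlier, so no $\CAT(0)$ structure is needed at that step.
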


\begin{proof}
 We first prove the part~(i). Let $\lambda, \mu>0$ and $x,y\in X$ 
 be given. Then we have 
 \[
  \frac{1}{\lambda} \gamma_{J_{\lambda}x, x} \in AJ_{\lambda}x 
  \quad \textrm{and} \quad 
  \frac{1}{\mu} \gamma_{J_{\mu}y, y} \in AJ_{\mu}y.   
 \]
 The monotonicity of $A$ and Lemma~\ref{lem:g_p-ql} imply that 
 \begin{align*}
  0
 &\geq g_{J_{\lambda}x}\left(
  \frac{1}{\lambda}\gamma_{J_{\lambda}x, x}, \gamma_{J_{\lambda}x, J_{\mu}y}
 \right)
  + g_{J_{\mu}y}\left(
  \frac{1}{\mu}\gamma_{J_{\mu}y, y}, \gamma_{J_{\mu}y, J_{\lambda}x}
 \right) \\
 &= \frac{1}{\lambda \mu} \bigl(\mu g_{J_{\lambda}x}\left(
  \gamma_{J_{\lambda}x, x}, \gamma_{J_{\lambda}x, J_{\mu}y}
 \right)
  + \lambda g_{J_{\mu}y}\left(
  \gamma_{J_{\mu}y, y}, \gamma_{J_{\mu}y, J_{\lambda}x}
 \right)
  \bigr) \\
 &\geq \frac{1}{\lambda \mu} \left(
   \mu \ip{\overrightarrow{(J_{\lambda}x)x}}{\overrightarrow{(J_{\lambda}x)(J_{\mu}y)}}
 + \lambda \ip{\overrightarrow{(J_{\mu}y)y}}{\overrightarrow{(J_{\mu}y)(J_{\lambda}x)}} 
  \right) 
 \end{align*}
 and hence 
 \[
  \mu \ip{\overrightarrow{(J_{\lambda}x)x}}{\overrightarrow{(J_{\lambda}x)(J_{\mu}y)}}
  \leq 
  \lambda \ip{\overrightarrow{(J_{\mu}y)y}}{\overrightarrow{(J_{\lambda}x)(J_{\mu}y)}}. 
 \]
 This implies that 
 \begin{align*}
 & \mu \bigl(
 \rho(J_{\lambda}x, J_{\mu}y)^2 + \rho(x, J_{\lambda}x)^2 - \rho(x, J_{\mu}y)^2 
 \bigr) \\
 &\quad \leq 
 \lambda \bigl(
 \rho(y, J_{\lambda}x)^2 - \rho(J_{\mu}y, J_{\lambda}x)^2 - \rho(y, J_{\mu}y)^2\bigr)
 \end{align*}
 and hence we obtain the conclusion. 

 We next prove the part~(ii). Let $\lambda, \mu>0$ and $x\in X$ be given. 
 Setting $y=J_{\lambda}x$ in the part~(i), we have 
 \begin{align*}
 &(\lambda + \mu) \rho(J_{\lambda}x, J_{\mu}J_{\lambda} x)^2 \\
 &\quad \leq \lambda \rho(J_{\lambda}x, J_{\lambda}x)^2 
 + \mu \rho(J_{\mu}J_{\lambda}x,x)^2
 -\mu \rho(J_{\lambda}x, x)^2
 -\lambda \rho(J_{\mu}J_{\lambda}x, J_{\lambda}x)^2. 
 \end{align*}
Hence it follows from Lemma~\ref{lem:CS-inequality} that 
 \begin{align*}
 \lambda \rho(J_{\lambda}x, J_{\mu}J_{\lambda} x)^2 
 &\leq 
 \frac{\mu}{2}\bigl(
 \rho(J_{\mu}J_{\lambda}x,x)^2
 -\rho(J_{\lambda}x, x)^2
 -\rho(J_{\lambda}x, J_{\mu}J_{\lambda} x)^2
 \bigr) \\
 &=
 \mu \ip{\overrightarrow{(J_{\mu}J_{\lambda}x)(J_{\lambda}x)}}
 {\overrightarrow{(J_{\lambda}x)x}} \\
 &\leq \mu
 \rho(J_{\mu}J_{\lambda}x, J_{\lambda}x)
 \rho(J_{\lambda}x, x). 
 \end{align*}
 Therefore we obtain the desired inequality. 
\end{proof}

We next prove the following demiclosedness principle. 

\begin{lemma}\label{lem:res-demi-a}
 Let $(X, \rho)$ be a $\CAT(0)$ space, 
 $A\colon X\to 2^{TX}$ a monotone vector field 
 satisfying the surjectivity condition, 
 $J_{\lambda}$ the resolvent of $A$ with respect to $\lambda >0$. 
 $\{\lambda_n\}$ a sequence of positive real numbers, 
 $\{z_n\}$ a sequence in $X$, and $p$ an element in $X$. 
 If 
 \[
  \AC\bigl(\{J_{\lambda_n} z_n\}\bigr)=\{p\} 
 \quad \textrm{and} \quad 
  \frac{1}{\lambda_n}\rho(J_{\lambda_n}z_n, z_n)\to 0, 
 \]
 then $p\in \Zer (A)$. 
\end{lemma}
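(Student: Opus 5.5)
Set $y_n = J_{\lambda_n} z_n$. Since $\AC(\{y_n\})=\{p\}$ is a singleton, $\{y_n\}$ is bounded. The idea is to test $p$ against the resolvent: we show that $J_{\mu} p = p$ for some (any) $\mu>0$, and then Lemma~\ref{lem:res-fund}(ii) gives $p\in\Zer(A)$.

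Fix $\mu>0$ and put $q = J_{\mu}p$. We apply Lemma~\ref{lem:res-ineq}(i) with $(\lambda,x)=(\lambda_n,z_n)$ and $(\mu,y)=(\mu,p)$:
\begin{align*}
 (\lambda_n+\mu)\rho(y_n,q)^2 \leq \lambda_n\rho(y_n,p)^2 + \mu\rho(q,z_n)^2 - \mu\rho(y_n,z_n)^2 - \lambda_n\rho(q,p)^2 .
\end{align*}
Dividing by $\lambda_n$ and rearranging gives
\begin{align*}
 \rho(y_n,q)^2 + \rho(q,p)^2 \leq \rho(y_n,p)^2 + \frac{\mu}{\lambda_n}\bigl(\rho(q,z_n)^2 - \rho(y_n,z_n)^2 - \rho(y_n,q)^2\bigr).
\end{align*}

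The bracket is the main term to control. By the definition of the quasilinearization,
\begin{align*}
 \rho(q,z_n)^2 - \rho(y_n,z_n)^2 - \rho(y_n,q)^2 = 2\ip{\overrightarrow{y_nq}}{\overrightarrow{y_nz_n}} .
\end{align*}
By Lemma~\ref{lem:CS-inequality} this is at most $2\rho(y_n,q)\rho(y_n,z_n)$. Hence the extra term is bounded by
\begin{align*}
 2\mu\,\rho(y_n,q)\,\frac{\rho(y_n,z_n)}{\lambda_n}.
\end{align*}
Here $\rho(y_n,q)$ is bounded because $\{y_n\}$ is bounded, and $\rho(y_n,z_n)/\lambda_n\to 0$ by hypothesis. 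So the term tends to $0$. Note that we never need $\lambda_n$ bounded below, since only the ratio $\rho(y_n,z_n)/\lambda_n$ appears.

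Taking $\limsup$ as $n\to\infty$ gives
\begin{align*}
 \limsup_{n}\rho(y_n,q)^2 + \rho(q,p)^2 \leq \limsup_{n}\rho(y_n,p)^2 .
\end{align*}
Because $p$ is the asymptotic center, $\limsup_n\rho(y_n,p)\leq\limsup_n\rho(y_n,q)$. Together with the previous inequality this forces $\rho(q,p)^2\leq 0$, that is, $J_\mu p=p$. Lemma~\ref{lem:res-fund}(ii) then yields $p\in\Zer(A)$.

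The only delicate point is the sign bookkeeping in rewriting Lemma~\ref{lem:res-ineq}(i) as a quasilinearization term; the rest follows from Lemma~\ref{lem:CS-inequality} and the definition of the asymptotic center. No completeness of $X$ is needed.
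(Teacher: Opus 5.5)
Your proof is correct, but it takes a somewhat different route from the paper's.

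\textbf{The paper's route.} It first uses Lemma~\ref{lem:res-ineq}(ii) to show that $y_n=J_{\lambda_n}z_n$ is an approximate fixed point of $J_\mu$, namely $\rho(J_\mu y_n,y_n)\le(\mu/\lambda_n)\rho(y_n,z_n)\to 0$. It then runs the classical demiclosedness argument. Nonexpansiveness of $J_\mu$ and the triangle inequality give $\limsup_n\rho(y_n,J_\mu p)\le\limsup_n\rho(y_n,p)$, so $J_\mu p\in\AC(\{y_n\})=\{p\}$.

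\textbf{Your route.} You apply Lemma~\ref{lem:res-ineq}(i) once, with $y=p$. This yields the quantitative Opial-type inequality $\limsup_n\rho(y_n,J_\mu p)^2+\rho(J_\mu p,p)^2\le\limsup_n\rho(y_n,p)^2$.

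\textbf{Comparison.} Your argument bypasses Lemma~\ref{lem:res-ineq}(ii) and the nonexpansiveness of $J_\mu$. It also uses only that $p$ minimizes $\limsup_n\rho(y_n,\cdot)$ and that $\{y_n\}$ is bounded, not the uniqueness of the asymptotic center. The paper's version is more modular, since it reuses the standard demiclosedness pattern for nonexpansive maps. The underlying ingredients are the same either way, because Lemma~\ref{lem:res-ineq}(ii) is itself derived from (i) and Lemma~\ref{lem:CS-inequality}. Your grouping of $\rho(q,z_n)^2-\rho(y_n,z_n)^2-\rho(y_n,q)^2$ into a single bracket is what makes the argument work without a lower bound on $\lambda_n$.

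\textbf{A small sign slip.} By the definition of the quasilinearization, $\rho(q,z_n)^2-\rho(y_n,z_n)^2-\rho(y_n,q)^2=-2\ip{\overrightarrow{y_nq}}{\overrightarrow{y_nz_n}}$, not $+2\ip{\overrightarrow{y_nq}}{\overrightarrow{y_nz_n}}$. This is precisely the step you flagged as delicate. It is harmless, because Lemma~\ref{lem:CS-inequality} bounds the absolute value. The triangle inequality $\rho(q,z_n)\le\rho(q,y_n)+\rho(y_n,z_n)$ also gives the bound $2\rho(y_n,q)\rho(y_n,z_n)$ directly.
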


\begin{proof}
 Let $\mu>0$ be given. 
 It follows from the part~(ii) in Lemma~\ref{lem:res-ineq} that 
 \[
  0\leq \rho(J_{\mu}J_{\lambda_n}z_n, J_{\lambda_n}z_n) 
  \leq 
  \frac{\mu}{\lambda_n} 
  \rho(J_{\lambda_n}z_n, z_n) 
 \]
 for all $n\in \N$ and hence it follows from assumption that 
 \[
  \lim_{n\to \infty} \rho(J_{\mu}J_{\lambda_n}z_n, J_{\lambda_n}z_n) = 0. 
 \]
 Since $J_{\mu}$ is nonexpansive by Lemma~\ref{lem:res-fund}, we have 
 \begin{align*}
  \rho(J_{\lambda_n}z_n, J_{\mu}p) 
  &\leq 
  \rho(J_{\lambda_n}z_n, J_{\mu}J_{\lambda_n}z_n) 
  + \rho(J_{\mu}J_{\lambda_n}z_n, J_{\mu}p) \\
  &\leq 
  \rho(J_{\lambda_n}z_n, J_{\mu}J_{\lambda_n}z_n) 
  + \rho(J_{\lambda_n}z_n, p)
 \end{align*}
 for all $n\in \N$. Taking the upper limit, we obtain 
 \[
  \limsup_{n\to \infty} \rho(J_{\lambda_n}z_n, J_{\mu}p) 
  \leq 
  \limsup_{n\to \infty} \rho(J_{\lambda_n}z_n, p). 
 \]
 Since $\AC\bigl(\{J_{\lambda_n}z_n\}\bigr)=\{p\}$, 
 we know that $J_{\mu}p = p$. Consequently, 
 the part~(ii) in Lemma~\ref{lem:res-fund} implies that 
 $p\in \Zer (A)$. 
\end{proof}

Let $(X, \rho)$ be a $\CAT(0)$ space and 
$f\colon X\to (-\infty, \infty]$ a proper lower semicontinuous 
convex function. 
For a fixed element $x\in X$, a point $x^*\in T_xX$ is said to be 
a subgradient of $f$ at $x$ if 
\[
 f(x) + g_x(x^*, \gamma_{x,y}) \leq f(y) 
\]
for all $y\in X$. The set of all subgradients of $f$ at $x\in X$ 
is denoted by $\partial f(x)$. This set is called 
the subdifferential of $f$ at $x\in X$. 
The vector field $\partial f\colon X\to 2^{TX}$ is called 
the subdifferential mapping of $f$. 

\begin{lemma}\label{lem:prox}
 Let $(X, \rho)$ be a $\CAT(0)$ space and 
 $f\colon X\to (-\infty, \infty]$ a proper lower semicontinuous 
 convex function. 
 Then the following hold. 
 \begin{enumerate}
  \item[(i)] $\partial f\colon X\to 2^{TX}$ is a monotone vector field; 
  \item[(ii)] $\Zer (\partial f)=\Argmin_X f$; 
  \item[(iii)] if $X$ is complete, 
 then $\partial f$ satisfies the surjectivity condition. 
 Further, if $\lambda >0$,  then 
 the resolvent $J_{\lambda}$ of $\partial f$ with respect to 
 $\lambda$ coincides with 
 the proximal mapping $\Prox_{\lambda f}$ of $\lambda f$. 
 \end{enumerate}
\end{lemma}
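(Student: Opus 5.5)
For (i) I will take $(x,x^*),(y,y^*)\in\Gra(\partial f)$. The subgradient inequalities at $x$ (tested with $y$) and at $y$ (tested with $x$) give
\[
 f(x)+g_x(x^*,\gamma_{x,y})\leq f(y)
 \quad\textrm{and}\quad
 f(y)+g_y(y^*,\gamma_{y,x})\leq f(x).
\]
Both $f(x)$ and $f(y)$ are finite. Indeed, if $x^*\in\partial f(x)$ and $f(x)=\infty$, the inequality $f(x)+g_x(\cdots)\leq f(y)$ would fail for $y\in\Dom f$, which exists because $f$ is proper; so I restrict to $\Dom f$, which contains $\D(\partial f)$. Adding the two inequalities and cancelling $f(x)+f(y)$ gives the monotonicity inequality. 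For (ii), note that $g_u(0_u,\cdot)=0$, so $0_u\in\partial f(u)$ holds iff $f(u)\leq f(y)$ for all $y$, which means $u\in\Argmin_X f$.

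The substantive part is (iii). Fix $\lambda>0$ and $x\in X$, and let $z=\Prox_{\lambda f}x$; it exists by Lemma~\ref{lem:prox-well-def}, since $X$ is complete and $\lambda f$ is proper, lsc and convex. I aim to show $\frac1\lambda\gamma_{z,x}\in\partial f(z)$, that is,
\[
 f(z)+\frac1\lambda g_z(\gamma_{z,x},\gamma_{z,y})\leq f(y)
 \quad\textrm{for all } y\in X.
\]
This yields surjectivity. By Lemma~\ref{lem:res-fund}(i) the resolvent is single valued, so it also gives $J_\lambda x=z=\Prox_{\lambda f}x$.

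To prove the inequality, I take $y\in\Dom f$ (otherwise it is trivial) and set $y_\varepsilon=(1-\varepsilon)z\oplus\varepsilon y$ for $\varepsilon\in(0,1)$. Minimality of $z$, convexity of $f$, and the quasilinearization identity
\[
 \rho(y_\varepsilon,x)^2=\rho(z,x)^2+\rho(z,y_\varepsilon)^2-2\ip{\overrightarrow{zx}}{\overrightarrow{zy_\varepsilon}}
\]
together give
\[
 \lambda\varepsilon\bigl(f(y)-f(z)\bigr)\geq \lambda\bigl(f(y_\varepsilon)-f(z)\bigr)\geq \frac12\bigl(\rho(z,x)^2-\rho(y_\varepsilon,x)^2\bigr)
 =\ip{\overrightarrow{zx}}{\overrightarrow{zy_\varepsilon}}-\frac{\varepsilon^2}{2}\rho(z,y)^2 .
\]
Dividing by $\lambda\varepsilon$ and letting $\varepsilon\to+0$, Lemma~\ref{lem:relation-g_p-ip} (with $p=z$, $\alpha=\beta=1$) identifies $\lim_{\varepsilon\to+0}\varepsilon^{-1}\ip{\overrightarrow{zx}}{\overrightarrow{zy_\varepsilon}}$ with $g_z(\gamma_{z,x},\gamma_{z,y})$. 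This yields $f(y)-f(z)\geq\frac1\lambda g_z(\gamma_{z,x},\gamma_{z,y})$. The homogeneity of $g_z$ then turns this into $g_z(\frac1\lambda\gamma_{z,x},\gamma_{z,y})$, which is exactly the required inequality.

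The main obstacle is this last step. It requires that the first-order expansion of the quasilinearization along the geodesic matches the Alexandrov-angle quantity $g_z$, rather than only the comparison-angle lower bound from Lemma~\ref{lem:g_p-ql}. The lower bound runs in the wrong direction here, which is why the exact limit formula of Lemma~\ref{lem:relation-g_p-ip} is essential. The degenerate cases $z=x$ or $z=y$ are covered by that lemma's trivial case.
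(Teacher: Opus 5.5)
Your proof is correct and follows the paper's argument. Parts (i) and (ii) are identical to the paper's. In (iii) you likewise perturb $z=\Prox_{\lambda f}x$ along $(1-\varepsilon)z\oplus\varepsilon y$, use convexity of $f$, divide by $\varepsilon$, and pass to the limit via Lemma~\ref{lem:relation-g_p-ip}. The only difference is minor: you combine minimality at $y_\varepsilon$ directly with the exact quasilinearization identity, which leaves an $O(\varepsilon^2)$ error, whereas the paper first derives $f(z)+\frac{1}{\lambda}\ip{\overrightarrow{zx}}{\overrightarrow{zy}}\leq f(y)$ via \eqref{eq:CN-CAT0} and then applies it at $y_\varepsilon$.
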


\begin{proof}
We first prove the part~(i). 
The definition of $\partial f$ implies that $\partial f$ 
is a vector field on $X$. 
Let $(x,x^*), (y,y^*)\in \Gra(\partial f)$ be given. 
Then we have 
\[
 f(x) + g_x(x^*, \gamma_{x,z}) \leq f(z) 
\]
for all $z\in X$. Since $f$ is proper, we have $a\in X$ such that 
$f(a)\in \R$. This implies that 
\[
 f(x) \leq f(a) - g_x(x^*, \gamma_{x,a}) < \infty 
\]
and hence $x \in \Dom f$. Similarly, we have $y\in \Dom f$. 
Since 
\[
 f(x) + g_x(x^*, \gamma_{x,y}) \leq f(y) 
 \quad \textrm{and} \quad 
 f(y) + g_y(y^*, \gamma_{y,x}) \leq f(x), 
\]
we have 
\[
 g_x(x^*, \gamma_{x,y}) + g_y(y^*, \gamma_{y,x}) \leq 0, 
\]
which shows that $\partial f$ is monotone. 

We next prove the part~(ii). Let $u\in X$ be given. Then we have 
\begin{align*}
 u\in \Zer (\partial f)
 &\Longleftrightarrow 
 0_u\in \partial f (u) \\
 &\Longleftrightarrow 
 f(u) + g_x(0_u, \gamma_{x,y}) \leq f(y) \quad (\forall y\in X) \\
 &\Longleftrightarrow 
 f(u) \leq f(y) \quad (\forall y\in X) \\
 &\Longleftrightarrow 
 u\in \Argmin_X f. 
\end{align*}
Thus we have the equality. 

We finally prove the part~(iii). 
Suppose that $X$ is complete and let $\lambda>0$ be given. 
Then Lemma~\ref{lem:prox-well-def} 
ensures that the proximal mapping $\Prox_{\lambda f}$ of $\lambda f$ 
is well defined. 
Let $x\in X$ be given and set $z=\Prox_{\lambda f}x$. 
Fix $y\in X$ and set 
\[
 u_n = \left(1-\frac{1}{n}\right)z \oplus \frac{1}{n}y
\]
for all $n\in \N$. Then it follows from the convexity of $f$ 
and~\eqref{eq:CN-CAT0} that 
\begin{align*}
 f(z) + \frac{1}{2\lambda} \rho(z, x)^2 
 &\leq f(u_n) + \frac{1}{2\lambda} \rho(u_n, x)^2 \\
 &\leq \left(1-\frac{1}{n}\right)f(z) + \frac{1}{n}f(y) \\
 &\quad + \frac{1}{2\lambda} 
 \left(\left(1-\frac{1}{n}\right)\rho(z, x)^2 
 + \frac{1}{n}\rho(y, x)^2 
 -\frac{1}{n}\left(1-\frac{1}{n}\right)\rho(z, y)^2\right)
\end{align*}
and hence 
\begin{align*}
 f(z) + \frac{1}{2\lambda} 
 \left(\rho(z, x)^2 
 + \left(1-\frac{1}{n}\right)\rho(z, y)^2 
 - \rho(y, x)^2 \right) 
 \leq f(y) 
\end{align*}
for all $n\in \N$. Taking the limit with respect to $n$, we have 
\begin{align*}
  f(y) 
 \geq f(z) + \frac{1}{2\lambda} 
 \bigl(\rho(z, x)^2  + \rho(z, y)^2 - \rho(y, x)^2 \bigr) 
 = f(z) + \frac{1}{\lambda} 
 \ip{\overrightarrow{zx}}{\overrightarrow{zy}}. 
\end{align*}
Thus we obtain 
\begin{align*}
 f(z) + \frac{1}{\lambda} 
 \ip{\overrightarrow{zx}}{\overrightarrow{zy}}
 \leq f(y). 
\end{align*}
If $\varepsilon \in (0,1]$, then we have 
\begin{align*}
 f(z) + \frac{1}{\lambda} 
 \ip{\overrightarrow{zx}}{\overrightarrow{z\bigl(
  (1-\varepsilon)z \oplus \varepsilon y
 \bigr)}}
 &\leq f\bigl(
 (1-\varepsilon)z \oplus \varepsilon y 
 \bigr) \\
 &\leq (1-\varepsilon)f(z) + \varepsilon f(y)  
\end{align*}
and hence 
\begin{align*}
 f(z) + \frac{1}{\varepsilon \lambda} 
 \ip{\overrightarrow{zx}}{\overrightarrow{z\bigl(
  (1-\varepsilon)z \oplus \varepsilon y
 \bigr)}}
 \leq f(y). 
\end{align*}
It then follows from Lemma~\ref{lem:relation-g_p-ip} that 
\begin{align*}
 \lim_{\varepsilon \to +0} 
 \frac{1}{\varepsilon \lambda} 
 \ip{\overrightarrow{zx}}{\overrightarrow{z\bigl(
  (1-\varepsilon)z \oplus \varepsilon y
 \bigr)}}
 = g_z\left(
 \frac{1}{\lambda}\gamma_{z,x}, 
 \gamma_{z,y}
 \right). 
\end{align*}
Consequently, we obtain 
\begin{align*}
 f(z) + g_z\left(
 \frac{1}{\lambda}\gamma_{z,x}, 
 \gamma_{z,y}
 \right) 
 \leq f(y)
\end{align*}
for all $y\in X$. Thus we have 
$\lambda^{-1}\gamma_{z, x} 
 \in \partial f(z)$. 
Therefore, $\partial f$ satisfies the surjectivity condition. 
It also holds that 
\[
 J_{\lambda}x = z = \Prox_{\lambda f} (x)
\]
for all $x\in X$. Consequently, 
we have $J_{\lambda}=\Prox_{\lambda f}$. 
\end{proof}

\section{The proximal point method}

In this section, we study the asymptotic behavior of 
sequences generated by the proximal point method 
for monotone vector fields in Hadamard spaces. 

The part~(i) and the part~(ii) of the following result generalize 
the corresponding theorems in Hilbert spaces by 
Rockafellar~\cite{MR0410483} and 
Br\'{e}zis and Lions~\cite{MR491922}, respectively. 

\begin{theorem}\label{thm:PPA-CAT0}
 Let $X$ be an Hadamard space, 
 $A\colon X\to 2^{TX}$ a monotone vector field 
 satisfying the surjectivity condition, 
 $J_{\lambda}$ the resolvent of $A$ with respect to $\lambda>0$, 
 and $\{x_n\}$ a sequence defined by $x_1\in X$ and 
 \[
  x_{n+1} = J_{\lambda_n}x_n \quad (n=1,2,\dots), 
 \]
 where $\{\lambda_n\}$ is a sequence of positive real numbers. 
 Then the following hold. 
 \begin{enumerate}
  \item[(i)] If $\sum_{n=1}^{\infty}\lambda_n = \infty$, 
 then $\{x_n\}$ is bounded if and only if $\Zer (A)$ is nonempty; 
  \item[(ii)] if $\Zer (A)$ is nonempty and 
 $\sum_{n=1}^{\infty}\lambda_n^2 =\infty$, 
 then $\{x_n\}$ is $\Delta$-convergent to 
 a point in $\Zer (A)$. 
 \end{enumerate} 
\end{theorem}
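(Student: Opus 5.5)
For part (i), the ``if'' direction follows from nonexpansiveness: when $u\in \Zer (A)$, Lemma~\ref{lem:res-fund}(ii),(iv) and \eqref{eq:quasi-fmn} give
\[
\rho(x_{n+1},u)^2+\rho(x_{n+1},x_n)^2\le \rho(x_n,u)^2,
\]
so $\{\rho(x_n,u)\}$ is nonincreasing and $\{x_n\}$ is bounded. For the ``only if'' direction I will use the averaged-function technique of Theorem~\ref{thm:min-g}.

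Suppose $\{x_n\}$ is bounded. Apply Lemma~\ref{lem:res-ineq}(i) with $J_{\lambda_k}x_k=x_{k+1}$ and an arbitrary $y\in X$, $\mu>0$. Dropping the term $-\lambda_k\rho(J_\mu y,y)^2$ and the $(\lambda_k+\mu)$-weighted left side only partially, the aim is an inequality of the form
\[
\lambda_k\bigl(\rho(J_\mu y,x_{k+1})^2-\rho(y,x_{k+1})^2\bigr)\le \mu\bigl(\rho(J_\mu y,x_k)^2-\rho(J_\mu y,x_{k+1})^2\bigr)-\lambda_k\rho(J_\mu y,y)^2.
\]
To get it, compare $(\lambda_k+\mu)\rho(x_{k+1},J_\mu y)^2$ with $\mu\rho(J_\mu y,x_k)^2$ and discard $-\mu\rho(x_{k+1},x_k)^2\le 0$. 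Summing over $k=1,\dots,n$ telescopes the right-hand side, leaving a bounded quantity $\mu\rho(J_\mu y,x_1)^2$.

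Divide by $\sum_{l\le n}\lambda_l\to\infty$ and take the limsup. With
\[
g(y)=\limsup_n \Bigl(\sum_{l\le n}\lambda_l\Bigr)^{-1}\sum_{k\le n}\lambda_k\rho(y,x_{k+1})^2,
\]
this yields $g(J_\mu y)\le g(y)-\rho(J_\mu y,y)^2$. (If the limsup of a difference causes trouble, I use subadditivity of $\limsup$ directly.) By Theorem~\ref{thm:min-g}, $g$ has a unique minimizer $p$. Taking $y=p$ forces $\rho(J_\mu p,p)=0$, so $p\in\Fix(J_\mu)=\Zer(A)$.

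For part (ii), fix $u\in\Zer(A)$. The inequality above shows $\rho(x_n,u)$ converges for every zero, and it also gives $\sum_n\rho(x_{n+1},x_n)^2<\infty$. The next step is to show that $a_n=\lambda_n^{-1}\rho(x_{n+1},x_n)$ is nonincreasing. This comes from Lemma~\ref{lem:res-ineq}(i) applied with $x=x_n$, $y=x_{n+1}$, $\lambda=\lambda_n$, $\mu=\lambda_{n+1}$, together with the Cauchy--Schwarz Lemma~\ref{lem:CS-inequality}, in the spirit of the proof of Lemma~\ref{lem:res-ineq}(ii). That is, from $\frac1{\lambda}\gamma_{x_{n+1},x_n}\in Ax_{n+1}$ and $\frac1\mu\gamma_{x_{n+2},x_{n+1}}\in Ax_{n+2}$, monotonicity and Lemma~\ref{lem:g_p-ql} should give $\rho(x_{n+2},x_{n+1})/\lambda_{n+1}\le\rho(x_{n+1},x_n)/\lambda_n$.

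Since $\sum\lambda_n^2a_n^2<\infty$, $\sum\lambda_n^2=\infty$, and $a_n$ is monotone, we get $a_n\to0$. For any $p\in\omega_\Delta(\{x_n\})$ with $x_{n_i}\to p$ ($\Delta$), Lemma~\ref{lem:res-demi-a} applied with $z_i=x_{n_i-1}$ and $\lambda=\lambda_{n_i-1}$ gives $p\in\Zer(A)$. Hence $\rho(p,x_n)$ converges, and Lemma~\ref{lem:Delta-conv-CAT0} yields $\Delta$-convergence to a point, which lies in $\Zer(A)$ since it belongs to $\omega_\Delta$.

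I expect the main obstacle to be the monotonicity of $a_n$. The cross term $\ip{\overrightarrow{x_{n+2}x_{n+1}}}{\overrightarrow{x_{n+1}x_n}}$ must be bounded via Cauchy--Schwarz with the right weights, using both resolvent memberships rather than just the fixed-point identity.
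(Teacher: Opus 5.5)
Your proposal is correct and follows the paper's proof essentially step for step: the averaged function $g$ with Theorem~\ref{thm:min-g} and Lemma~\ref{lem:res-ineq}(i) for the converse in (i), and $\sum\rho(x_{n+1},x_n)^2<\infty$ plus monotonicity of $\lambda_n^{-1}\rho(x_{n+1},x_n)$ and the demiclosedness Lemma~\ref{lem:res-demi-a} for (ii). The step you flag as the main obstacle is exactly Lemma~\ref{lem:res-ineq}(ii) with $x=x_n$, $\lambda=\lambda_n$, $\mu=\lambda_{n+1}$, so it is already available; your small refinements are also valid, namely keeping the term $-\rho(J_\mu y,y)^2$ and applying Lemma~\ref{lem:res-demi-a} to $z_i=x_{n_i-1}$.
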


\begin{proof}
 We first prove the part~(i). 
 Suppose that $\sum_{n=1}^{\infty}\lambda_n = \infty$. 
 If $\Zer (A)$ is nonempty, then we can fix $z \in \Zer (A)$. 
 Since $z\in \Fix (J_{\lambda})$ and $J_{\lambda}$ is nonexpansive 
 for each $\lambda>0$, we have 
 \begin{align*}
  \rho(z, x_{n+1}) 
  = \rho(J_{\lambda_n}z, J_{\lambda_n}x_n) 
  \leq \rho(z, x_n)
 \end{align*}
 for all $n\in \N$. This implies that 
 $\bigl\{d(z, x_n)\bigr\}$ is convergent and hence 
 $\{x_n\}$ is bounded. 
 On the other hand, we suppose that $\{x_n\}$ is bounded. 
 Let $\{\sigma_n\}$ be the real sequence given by 
 \[
  \sigma_n = \sum_{l=1}^{n} \lambda_l
 \]
 for all $n\in \N$ and let $g$ be the real function defined by 
 \begin{align*}
  g(y) = \limsup_{n\to \infty} 
 \frac{1}{\sigma_n} \sum_{k=1}^n 
 \lambda_k \rho(y, x_{k+1})^2 
 \end{align*}
 for all $y\in X$. 
 It follows from Theorem~\ref{thm:min-g} that 
 there exists $p \in X$ such that 
 \[
  \Argmin_X g=\{p\}. 
 \]
 Let $\mu>0$ be given. 
 Using the part~(i) in Lemma~\ref{lem:res-ineq}, we have 
 \begin{align*}
  (\lambda_k + \mu) \rho (x_{k+1}, J_{\mu}p)^2 
  \leq \lambda_k \rho(x_{k+1}, p)^2 
 +\mu \rho(J_{\mu}p, x_{k})^2
 \end{align*}
 and hence 
 \begin{align*}
  \lambda_k \rho(x_{k+1}, J_{\mu}p)^2 
  \leq \lambda_k \rho(x_{k+1}, p)^2 
 +\mu \bigl(
  \rho(J_{\mu}p, x_{k})^2 - \rho(J_{\mu}p, x_{k+1})^2
 \bigr)
 \end{align*}
 for all $k\in \N$. This implies that 
 \begin{align*}
  \frac{1}{\sigma_n} \sum_{k=1}^n \lambda_k \rho(x_{k+1}, J_{\mu}p)^2 
  \leq   
 \frac{1}{\sigma_n} \sum_{k=1}^n \lambda_k \rho(x_{k+1}, p)^2  
  + \frac{\mu}{\sigma_n}\rho(J_{\mu}p, x_1)^2
 \end{align*} 
 for all $n\in \N$. Taking the upper limit, we have 
 \begin{align*}
  g(J_{\mu}p) \leq g(p). 
 \end{align*}
 Since $p$ is the unique minimizer of $g$, we obtain 
 $J_{\mu}p=p$ and hence $p\in \Zer (A)$. 
 Thus $\Zer (A)$ is nonempty. 

 We next prove the part~(ii). 
 Suppose that $\Zer (A)$ is nonempty and $\sum_{n=1}^{\infty}\lambda_n^2=\infty$. 
 Let $z\in \Zer (A)$ be given. It then follows from~(ii) and~(iv)  
 in Lemma~\ref{lem:res-fund} that 
 \begin{align*}
  \rho(z, x_{n+1})^2 
  &= \rho(z, J_{\lambda_n}x_n)^2 \\
  &\leq \rho(z, x_{n})^2 - \rho(J_{\lambda_n}x_n, x_n)^2 \\
  &= \rho(z, x_{n})^2 - \rho(J_{\lambda_n}x_n, x_n)^2 \leq \rho(z, x_{n})^2
 \end{align*}
 for all $n\in \N$. Hence $\bigl\{\rho(z, x_{n})^2\bigr\}$ is convergent 
 and $\{x_n\}$ is bounded. We also have 
 \begin{align}\label{thm:PPA-CAT0-a}
  \rho(J_{\lambda_n}x_{n}, x_n)^2 
  \leq \rho(z, x_{n})^2 - \rho(z, x_{n+1})^2
 \end{align}
 and hence we obtain 
 \begin{align}\label{thm:PPA-CAT0-b}
  \lim_{n\to \infty}\rho(J_{\lambda_n}x_{n}, x_{n})=0. 
 \end{align}
 It follows from~\eqref{thm:PPA-CAT0-a} that 
 \begin{align*}
  \sum_{n=1}^{\infty}\lambda_n^2 
  \left(\frac{1}{\lambda_n}\rho(J_{\lambda_n}x_{n}, x_n)\right)^2 
  \leq \rho(z, x_1)^2 - \lim_{n\to \infty}\rho(z, x_{n})^2 < \infty. 
 \end{align*}
 Since $\sum_{n=1}^{\infty}\lambda_n^2=\infty$, we have 
 \begin{align}\label{thm:PPA-CAT0-c}
  \liminf_{n\to \infty}\frac{1}{\lambda_n}\rho(J_{\lambda_n}x_{n}, x_n) = 0. 
 \end{align}
 In fact, if this does not hold, 
 then we have $\delta>0$ and $n_0\in \N$ such that 
 \begin{align*}
  \frac{1}{\lambda_n}\rho(J_{\lambda_n}x_{n}, x_n) > \delta 
 \end{align*}
 for all $n\geq n_0$. Then we have 
 \begin{align*}
  \sum_{n=n_0}^{\infty}\lambda_n^2 
  \left(\frac{1}{\lambda_n}\rho(J_{\lambda_n}x_{n}, x_n)\right)^2 
  \geq \delta^2 \sum_{n=n_0}^{\infty}\lambda_n^2 = \infty. 
 \end{align*}
 This is a contradiction. 
 It then follows from the part~(ii) in Lemma~\ref{lem:res-ineq} that 
 \begin{align*}
  \frac{1}{\lambda_{n+1}} \rho(J_{\lambda_{n+1}}x_{n+1}, x_{n+1})
  = \frac{1}{\lambda_{n+1}} 
  \rho(J_{\lambda_{n+1}}J_{\lambda_n}x_{n}, J_{\lambda_n}x_{n}) 
  \leq \frac{1}{\lambda_{n}} \rho(J_{\lambda_{n}}x_{n}, x_{n}) 
 \end{align*}
 for all $n\in \N$ and hence the sequence 
 $\left\{\lambda_n^{-1}\rho(J_{\lambda_{n}}x_{n}, x_{n})\right\}$
 is convergent. Thus it follows from~\eqref{thm:PPA-CAT0-c} that 
 \begin{align}\label{thm:PPA-CAT0-d}
  \lim_{n\to \infty}\frac{1}{\lambda_n}\rho(J_{\lambda_{n}}x_{n}, x_n) = 0. 
 \end{align}
%%%
%From Here
%%%
 Let $w\in \omega_{\Delta}\bigl(\{x_n\}\bigr)$ be given. 
 Then we have a subsequence $\{x_{n_i}\}$ of $\{x_n\}$ which is 
 $\Delta$-convergent to $w$. 
 It follows from~\eqref{thm:PPA-CAT0-b} that 
 \begin{align*}
  \limsup_{i\to \infty}\rho(y, x_{n_i}) 
  = \limsup_{i\to \infty}\rho(y, J_{\lambda_{n_i}}x_{n_i})
 \end{align*}
 for all $y\in X$ and hence 
 \begin{align}\label{thm:PPA-CAT0-e}
  \AC\bigl(\{J_{\lambda_n}x_{n_i}\}\bigr) 
  = \AC\bigl(\{x_{n_i}\}\bigr) 
  = \{w\}. 
 \end{align}
 It then follows from~\eqref{thm:PPA-CAT0-d},~\eqref{thm:PPA-CAT0-e}, 
 and Lemma~\ref{lem:res-demi-a} that $w\in \Zer (A)$. 
 This implies that $\{\rho(w, x_{n})\}$ is convergent. 
 Consequently, $\{\rho(w, x_{n})\}$ is convergent 
 for all $w\in \omega_{\Delta}\bigl(\{x_n\}\bigr)$. 
 Then Lemma~\ref{lem:Delta-conv-CAT0} implies that 
 $\{x_{n}\}$ is $\Delta$-convergent to some point $x_{\infty}$ 
 in $X$. Since 
 \begin{align*}
  \{x_{\infty}\} = \omega_{\Delta}\bigl(\{x_n\}\bigr) \subset \Zer (A), 
 \end{align*}
 we conclude that $\{x_n\}$ is $\Delta$-convergent to 
 the point $x_{\infty}$ in $\Zer(A)$. 
 This completes the proof. 
\end{proof}

As a direct consequence of Lemma~\ref{lem:prox} 
and Theorem~\ref{thm:PPA-CAT0}, we obtain the following. 
Note that Ba{\v{c}}{\'a}k~\cite{MR3047087} proved that 
the conclusion of the part~(ii) holds whenever 
$\Argmin_X f$ is nonempty and $\sum_{n=1}^{\infty}\lambda_n =\infty$; 
see also Kimura and Kohsaka~\cite{MR3574140}. 

\begin{corollary}
 Let $X$ be an Hadamard space, 
 $f\colon X\to (-\infty, \infty]$ a proper lower semicontinuous convex function, 
 and $\{x_n\}$ a sequence defined by $x_1\in X$ and 
 \[
  x_{n+1} = \Prox_{\lambda_n f} x_n, 
 \]
 where $\{\lambda_n\}$ is a sequence of positive real numbers. 
 Then the following hold. 
 \begin{enumerate}
  \item[(i)] If $\sum_{n=1}^{\infty}\lambda_n = \infty$, 
 then $\{x_n\}$ is bounded if and only if the set $\Argmin_X f$ is nonempty; 
  \item[(ii)] if $\Argmin_X f$ is nonempty and 
 $\sum_{n=1}^{\infty}\lambda_n^2 =\infty$, 
 then $\{x_n\}$ is $\Delta$-convergent to 
 a point in $\Argmin_X f$. 
 \end{enumerate} 
\end{corollary}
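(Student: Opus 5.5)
The corollary follows by specializing Theorem~\ref{thm:PPA-CAT0} to the vector field $A=\partial f$. The plan is to check the hypotheses of that theorem for $\partial f$ and then translate its conclusions using Lemma~\ref{lem:prox}.

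First I verify the hypotheses. Since $X$ is an Hadamard space and $f$ is proper, lower semicontinuous and convex, part~(i) of Lemma~\ref{lem:prox} shows that $\partial f\colon X\to 2^{TX}$ is a monotone vector field. Since $X$ is complete, part~(iii) of Lemma~\ref{lem:prox} shows that $\partial f$ satisfies the surjectivity condition. The same part~(iii) identifies the resolvent: $J_{\lambda}=\Prox_{\lambda f}$ for every $\lambda>0$. Here one applies Lemma~\ref{lem:prox-well-def} to $\lambda f$, which is again proper, lower semicontinuous and convex.

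Next I rewrite the iteration. Because $J_{\lambda_n}=\Prox_{\lambda_n f}$, the recursion $x_{n+1}=\Prox_{\lambda_n f}x_n$ is exactly $x_{n+1}=J_{\lambda_n}x_n$. So Theorem~\ref{thm:PPA-CAT0} applies verbatim to $\{x_n\}$ with $A=\partial f$.

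Finally I translate the conclusions. Part~(ii) of Lemma~\ref{lem:prox} gives $\Zer(\partial f)=\Argmin_X f$. Substituting this into parts~(i) and~(ii) of Theorem~\ref{thm:PPA-CAT0} yields parts~(i) and~(ii) of the corollary directly.

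There is no real obstacle here. The only point requiring care is that the resolvent is taken with respect to $\lambda_n$ and coincides with $\Prox_{\lambda_n f}$, not with $\lambda_n\Prox_f$. This is precisely what part~(iii) of Lemma~\ref{lem:prox} asserts.
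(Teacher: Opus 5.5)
Your proposal is correct and is exactly the paper's argument: the paper obtains this corollary as a direct consequence of Theorem~\ref{thm:PPA-CAT0} applied to $A=\partial f$. Lemma~\ref{lem:prox} supplies monotonicity, the surjectivity condition, the identification $J_{\lambda}=\Prox_{\lambda f}$, and $\Zer(\partial f)=\Argmin_X f$.
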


\section{Two modified proximal point methods}

In this section, we study the asymptotic behavior of 
two variants of the proximal point method for 
monotone vector fields in Hadamard spaces. 

\begin{theorem}\label{thm:MPPA-CAT0-Delta}
 Let $(X, \rho)$ be an Hadamard space, 
 $A\colon X\to 2^{TX}$ a monotone vector field 
 satisfying the surjectivity condition, 
 $J_{\lambda}$ the resolvent of $A$ with respect to 
 $\lambda>0$, and 
 $\{x_n\}$ a sequence defined by $x_1\in X$ and 
 \begin{align}\label{eq:MPPA-CAT0-Delta}
  x_{n+1} = \alpha_{n} x_n \oplus (1-\alpha_n) J_{\lambda_n}x_n 
  \quad (n=1,2,\dots), 
 \end{align}
 where $\{\alpha_n\}$ is a sequence in $[0,1)$ 
 and $\{\lambda_n\}$ is a sequence of positive real numbers such that 
 \begin{align*}
  \sum_{n=1}^{\infty}(1-\alpha_n)\lambda_n=\infty. 
 \end{align*}
 Then the following hold. 
 \begin{enumerate}
  \item[(i)] $\{J_{\lambda_n}x_n\}$ is bounded 
 if and only if $\Zer (A)$ is nonempty; 
  \item[(ii)] if $\Zer(A)$ is nonempty, $\sup_{n}\alpha_n <1$, and 
 $\inf_{n} \lambda_n >0$, then 
 $\{x_n\}$ and $\{J_{\lambda_n}x_n\}$ are $\Delta$-convergent to a point $x_{\infty}$ in $\Zer(A)$. 
 \end{enumerate}
\end{theorem}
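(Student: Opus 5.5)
Write $y_n = J_{\lambda_n}x_n$ and $\beta_n = 1-\alpha_n$, so that $x_{n+1} = (1-\beta_n)x_n \oplus \beta_n y_n$. The plan mirrors the proof of Theorem~\ref{thm:PPA-CAT0}: an averaging functional built from Theorem~\ref{thm:min-g} gives part~(i), and Fej\'er monotonicity combined with the demiclosedness Lemma~\ref{lem:res-demi-a} and Lemma~\ref{lem:Delta-conv-CAT0} gives part~(ii).

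\emph{Part (i).} Suppose first that $\Zer(A)\neq\emptyset$ and fix $z\in\Zer(A)$. Nonexpansiveness of $J_{\lambda_n}$ gives $\rho(z,y_n)\le\rho(z,x_n)$. Then \eqref{eq:conv-d-CAT0} gives $\rho(z,x_{n+1})\le\rho(z,x_n)$, so $\{x_n\}$ is bounded and hence so is $\{y_n\}$.

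Conversely, suppose $\{y_n\}$ is bounded. Set $\sigma_n=\sum_{k=1}^n\beta_k\lambda_k$ and
\[
g(y)=\limsup_{n\to\infty}\frac{1}{\sigma_n}\sum_{k=1}^n\beta_k\lambda_k\rho(y,y_k)^2 .
\]
By Theorem~\ref{thm:min-g}, using $\sum\beta_n\lambda_n=\infty$ and the positivity of the weights (drop the indices with $\beta_k=0$ if needed), $g$ has a unique minimizer $p$. Fix $\mu>0$. Lemma~\ref{lem:res-ineq}(i), applied with $x=x_k$ and $y=p$, gives
\[
\lambda_k\rho(y_k,J_\mu p)^2\le\lambda_k\rho(y_k,p)^2+\mu\bigl(\rho(J_\mu p,x_k)^2-\rho(J_\mu p,y_k)^2\bigr).
\]
Multiply by $\beta_k$. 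By \eqref{eq:CN-CAT0}, $\rho(J_\mu p,x_{k+1})^2\le(1-\beta_k)\rho(J_\mu p,x_k)^2+\beta_k\rho(J_\mu p,y_k)^2$, so
\[
\beta_k\bigl(\rho(J_\mu p,x_k)^2-\rho(J_\mu p,y_k)^2\bigr)\le\rho(J_\mu p,x_k)^2-\rho(J_\mu p,x_{k+1})^2 .
\]
Summing over $k$, the right-hand side telescopes to at most $\rho(J_\mu p,x_1)^2$; dividing by $\sigma_n$ and taking the upper limit gives $g(J_\mu p)\le g(p)$. Uniqueness of the minimizer gives $J_\mu p=p$, so $p\in\Zer(A)$ by Lemma~\ref{lem:res-fund}(ii). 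One point to confirm: the $\rho(\cdot,x_k)$ terms only enter through the telescoping sum, so boundedness of $\{x_n\}$ itself is not needed for this direction.

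\emph{Part (ii).} Assume $\sup_n\alpha_n=a<1$ and $\inf_n\lambda_n=\lambda>0$, and fix $z\in\Zer(A)$. Combining \eqref{eq:CN-CAT0} with \eqref{eq:quasi-fmn} (valid since $J_{\lambda_n}$ is firmly metrically nonspreading by Lemma~\ref{lem:res-fund}(iv)) gives
\[
\rho(z,x_{n+1})^2\le\rho(z,x_n)^2-\beta_n\rho(y_n,x_n)^2 .
\]
Hence $\rho(z,x_n)$ converges for every $z\in\Zer(A)$, $\{x_n\}$ is bounded, and $\sum_n\beta_n\rho(y_n,x_n)^2<\infty$. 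Since $\beta_n\ge1-a>0$, this forces $\rho(y_n,x_n)\to0$, and then $\lambda_n^{-1}\rho(y_n,x_n)\le\lambda^{-1}\rho(y_n,x_n)\to0$.

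Now let $w\in\omega_\Delta(\{x_n\})$, witnessed by a subsequence $\{x_{n_i}\}$. Because $\rho(y_n,x_n)\to0$, the sequences $\{x_{n_i}\}$ and $\{y_{n_i}\}$ have the same asymptotic radii, so $\AC(\{y_{n_i}\})=\{w\}$. Lemma~\ref{lem:res-demi-a}, applied with $z_i=x_{n_i}$ and $\lambda_{n_i}$, then gives $w\in\Zer(A)$, so $\rho(w,x_n)$ converges. Lemma~\ref{lem:Delta-conv-CAT0} now yields that $\{x_n\}$ is $\Delta$-convergent to some $x_\infty$, and $x_\infty\in\omega_\Delta(\{x_n\})\subset\Zer(A)$. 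Finally, since $\rho(y_n,x_n)\to0$, every subsequence of $\{y_n\}$ has the same asymptotic center as the corresponding subsequence of $\{x_n\}$, so $\{y_n\}$ is also $\Delta$-convergent to $x_\infty$.

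The main obstacle is the converse direction of part~(i): choosing the weights $\beta_k\lambda_k$ so that the cross terms telescope, which relies on the convexity inequality \eqref{eq:CN-CAT0} as set out above. Part~(ii) is routine once that estimate is in place.
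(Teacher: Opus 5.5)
Your proposal is correct and follows the paper's proof essentially step for step. Part~(i) uses the same weighted functional $g$ with weights $(1-\alpha_k)\lambda_k$, with Lemma~\ref{lem:res-ineq}(i) and \eqref{eq:CN-CAT0} combined so the cross terms telescope; part~(ii) uses the Fej\'er estimate $\rho(z,x_{n+1})^2\le\rho(z,x_n)^2-(1-\alpha_n)\rho(J_{\lambda_n}x_n,x_n)^2$ followed by Lemmas~\ref{lem:res-demi-a} and~\ref{lem:Delta-conv-CAT0}. Your caveat about dropping indices with $\beta_k=0$ is vacuous, since $\alpha_n<1$ already makes every weight positive.
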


\begin{proof}
 We first prove the part~(i). If $\Zer (A)$ is nonempty, then we can fix $z\in \Zer (A)$. 
 Since $z\in \Fix (J_{\lambda})$ and $J_{\lambda}$ is nonexpansive 
 for each $\lambda >0$, we have 
 \begin{align*}
  \rho(z, x_{n+1}) 
  & = \rho(z, \alpha_{n} x_n \oplus (1-\alpha_n) J_{\lambda_n}x_n) \\
  & \leq \alpha_n \rho(z, x_n) + (1-\alpha_n) \rho (z, J_{\lambda_n}x_n) \\
  & = \alpha_n \rho(z, x_n) + (1-\alpha_n) \rho (J_{\lambda_n}z, J_{\lambda_n}x_n)  \leq \rho (z, x_n)
 \end{align*}
 for all $n\in \N$. This implies that $\{\rho(z, x_n)\}$ is convergent and $\{x_n\}$ is bounded. 
 Since 
 \begin{align*}
  \rho (z, J_{\lambda_n}x_n) \leq \rho (z, x_n)
 \end{align*}
 for all $n\in \N$, the sequence $\{J_{\lambda_n}x_n\}$ is also bounded. 
 Conversely, we suppose that $\{J_{\lambda_n}x_n\}$ is bounded. 
 Let $\{\sigma_n\}$ be the real sequence given by 
 \begin{align*}
  \sigma_n = \sum_{l=1}^{n}(1-\alpha_l)\lambda_l 
 \end{align*}
 for all $n\in \N$ and let $g$ be the real function defined by 
 \begin{align*}
  g(y) = \limsup_{n\to \infty} \frac{1}{\sigma_n} 
  \sum_{k=1}^{n}(1-\alpha_k)\lambda_k \rho (y, J_{\lambda_n}x_n)^2
 \end{align*}
 for all $y\in X$. Theorem~\ref{thm:min-g} ensures that 
 \begin{align*}
  \Argmin_X g = \{p\}
 \end{align*}
 for some $p\in X$. Let $\mu>0$ be given. 
 By the part~(i) in Lemma~\ref{lem:res-ineq}, we have 
 \begin{align*}
  (\lambda_k + \mu) \rho (J_{\lambda_k}x_k, J_{\mu}p)^2 
  \leq \lambda_k \rho(J_{\lambda_k}x_k, p)^2 + \mu \rho(J_{\mu}p, x_k)^2
 \end{align*}
 and hence 
 \begin{align}
  \begin{split}\label{eq:thm:MPPA-CAT0-Delta-a}
  \lambda_k \rho(J_{\lambda_k}x_k, J_{\mu}p)^2 
  \leq \lambda_k \rho(J_{\lambda_k}x_k, p)^2 
  + \mu \bigl(
     \rho(x_k, J_{\mu}p)^2 - \rho (J_{\lambda_k}x_k, J_{\mu}p)^2 
   \bigr)
  \end{split}
 \end{align}
 for all $k\in \N$. 
 On the other hand, it follows from the definition of $\{x_n\}$ that 
 \begin{align*}
  \rho(x_{k+1}, J_{\mu}p)^2 
  &= \rho \bigl(\alpha_k x_k \oplus (1-\alpha_k)J_{\lambda_k}x_k, J_{\mu}p\bigr)^2 \\
  &\leq 
  \alpha_k\rho(x_{k}, J_{\mu}p)^2 + (1-\alpha_k)\rho(J_{\lambda_k}x_k, J_{\mu}p)^2
 \end{align*}
 and hence 
 \begin{align}
  \begin{split}\label{eq:thm:MPPA-CAT0-Delta-b}
   (1-\alpha_k)\bigl(\rho(x_k, J_{\mu}p)^2 - \rho(J_{\lambda_k}x_k, J_{\mu}p)^2 \bigr)
   \leq \rho(x_{k}, J_{\mu}p)^2 - \rho(x_{k+1}, J_{\mu}p)^2 
  \end{split}
 \end{align}
 for all $k\in \N$. It then follows from~\eqref{eq:thm:MPPA-CAT0-Delta-a} 
 and~\eqref{eq:thm:MPPA-CAT0-Delta-b} that 
 \begin{align*}
  \begin{split}
   &(1-\alpha_k)\lambda_k \rho(J_{\lambda_k}x_k, J_{\mu}p)^2 \\
   &\leq (1-\alpha_k) \lambda_k \rho(J_{\lambda_k}x_k, p)^2 
  + (1-\alpha_k) \mu \bigl(
     \rho(x_k, J_{\mu}p)^2 - \rho (J_{\lambda_k}x_k, J_{\mu}p)^2 
   \bigr) \\
   &\leq 
   (1-\alpha_k)\lambda_k \rho(J_{\lambda_k}x_k, p)^2 
  + \mu \bigl(
     \rho(x_k, J_{\mu}p)^2 - \rho (x_{k+1}, J_{\mu}p)^2 
   \bigr)
  \end{split}
 \end{align*}
 for all $k\in \N$. Hence we obtain 
 \begin{align*}
  \begin{split}
   &\frac{1}{\sigma_n}\sum_{k=1}^{n}(1-\alpha_k)\lambda_k \rho(J_{\lambda_k}x_k, J_{\mu}p)^2 \\
   &\leq 
   \frac{1}{\sigma_n}\sum_{k=1}^{n}(1-\alpha_k)\lambda_k \rho(J_{\lambda_k}x_k, p)^2 
   + \frac{\mu}{\sigma_n} \bigl(
     \rho(x_1, J_{\mu}p)^2 - \rho (x_{n+1}, J_{\mu}p)^2 
   \bigr)
  \end{split}
 \end{align*}
 for all $n\in \N$. Taking the upper limit, we have from $\lim_{n}\sigma_n=\infty$ that 
 \begin{align*}
  g(J_{\mu}p) \leq g(p). 
 \end{align*}
 Since $p$ is the unique minimizer of $g$, we know that $J_{\mu}p = p$. 
 This implies that $p\in \Zer (A)$ and hence $\Zer (A)$ is nonempty. 

 We next prove the part~(ii). Suppose that $\Zer (A)$ is nonempty, $\sup_n \alpha_n <1$, 
 and $\inf_n \lambda_n >0$. Let $z\in \Zer (A)$ be given. 
 It follows from~\eqref{eq:quasi-fmn} and the part~(iv) in Lemma~\ref{lem:res-fund} that 
 \begin{align*}
  \begin{split}
   \rho(z, J_{\lambda_n}x_n)^2 
   \leq \rho(z, x_n)^2 - \rho(J_{\lambda_n}x_n, x_n)^2
  \end{split}
 \end{align*}
 and hence 
 \begin{align}
  \begin{split}\label{eq:thm:MPPA-CAT0-Delta-c}
   \rho(z, x_{n+1})^2 
   &=\rho\bigl(z, \alpha_n x_n \oplus (1-\alpha_n) J_{\lambda_n}x_n\bigr)^2 \\
   &\leq  \alpha_n \rho(z, x_n)^2 + (1-\alpha_n)\rho(z, J_{\lambda_n}x_n)^2 \\
%   -\alpha_n (1-\alpha_n)\rho(x_n, J_{\lambda_n}x_n)^2 \\
   &\leq  \rho(z, x_n)^2 - (1-\alpha_n)\rho(J_{\lambda_n}x_n, x_n)^2 
%   -\alpha_n (1-\alpha_n)\rho(x_n, J_{\lambda_n}x_n)^2 \\
  \end{split}
 \end{align}
 for all $n\in \N$. Thus $\{\rho(z, x_n)\}$ is convergent and $\{x_n\}$ is bounded. 
 Thus $\{J_{\lambda_n}x_n\}$ is bounded since 
 \begin{align*}
  \rho(z, J_{\lambda_n}x_n)\leq \rho(z, x_n), 
 \end{align*}
 for all $n\in \N$. 
 It then follows from~\eqref{eq:thm:MPPA-CAT0-Delta-c} that 
 \begin{align*}
  (1-\alpha_n) \rho(J_{\lambda_n}x_n, x_n)^2 
  \leq \rho(z, x_n)^2 - \rho(z, x_{n+1})^2
 \end{align*}
 for all $n\in \N$. Hence we have from $\sup_n \alpha_n <1$ that 
 \begin{align}\label{eq:thm:MPPA-CAT0-Delta-d}
  \lim_{n\to \infty}\rho(J_{\lambda_n}x_n, x_n) =0. 
 \end{align}

 Let $w\in \omega_{\Delta}\bigl(\{x_n\}\bigr)$ be given. 
 Then we have a subsequence $\{x_{n_i}\}$ of $\{x_n\}$ which 
 is $\Delta$-convergent to $w$. 
 The equality~\eqref{eq:thm:MPPA-CAT0-Delta-d} implies that 
 \begin{align*}
  \limsup_{i\to \infty} \rho(y, x_{n_i}) = \limsup_{i\to \infty} \rho(y, J_{\lambda_{n_i}}x_{n_i})
 \end{align*}
 for all $y\in X$ and hence 
 \begin{align*}
   \AC\bigl(\{J_{\lambda_{n_i}}x_{n_i}\}\bigr) 
  = \AC\bigl(\{x_{n_i}\}\bigr) = \{w\}. 
 \end{align*}
 It also follows from~\eqref{eq:thm:MPPA-CAT0-Delta-d} and $\inf_n \lambda_n >0$ that 
 \begin{align*}
  \lim_{n\to \infty}\frac{1}{\lambda_n}\rho(J_{\lambda_n}x_n, x_n) =0. 
 \end{align*}
 Hence Lemma~\ref{lem:res-demi-a} implies that 
 $w\in \Zer (A)$. Thus $\{\rho(w, x_n)\}$ is convergent for each $w\in \omega_{\Delta}\bigl(\{x_n\}\bigr)$. 
 It then follows from Lemma~\ref{lem:Delta-conv-CAT0} that 
 $\{x_n\}$ is $\Delta$-convergent to an element $x_{\infty}$ in $X$. 
 Consequently, we have 
 \begin{align*}
  \{x_{\infty}\} = \omega_{\Delta}\bigl(\{x_n\}\bigr) \subset \Zer (A). 
 \end{align*}
 It follows from~\eqref{eq:thm:MPPA-CAT0-Delta-d} that 
 $\{J_{\lambda_n}x_n\}$ is also $\Delta$-convergent to $x_{\infty}$. 
 Therefore we conclude that $\{x_n\}$ and $\{J_{\lambda_n}x_n\}$ are 
 $\Delta$-convergent to an element $x_{\infty}$ in $\Zer (A)$. 
\end{proof}

As a direct consequence of Lemma~\ref{lem:prox} 
and Theorem~\ref{thm:MPPA-CAT0-Delta}, we obtain the following. 

\begin{corollary}[\cite{MR3574140}]
  Let $(X, \rho)$ be an Hadamard space, 
 $f\colon X\to (-\infty, \infty]$ a proper lower semicontinuous convex function, 
 and $\{x_n\}$ a sequence defined by $x_1\in X$ and 
 \begin{align}
  x_{n+1} = \alpha_{n} x_n \oplus (1-\alpha_n) \Prox_{\lambda_n f}x_n 
  \quad (n=1,2,\dots),    
 \end{align}
 where $\{\alpha_n\}$ is a sequence in $[0,1)$ 
 and $\{\lambda_n\}$ is a sequence of positive real numbers such that 
 \begin{align*}
  \sum_{n=1}^{\infty}(1-\alpha_n)\lambda_n=\infty. 
 \end{align*}
 Then the following hold. 
 \begin{enumerate}
  \item[(i)] $\{J_{\lambda_n}x_n\}$ is bounded 
 if and only if $\Argmin_X f$ is nonempty; 
  \item[(ii)] if $\Argmin_X f$ is nonempty, $\sup_{n}\alpha_n <1$, and 
 $\inf_{n} \lambda_n >0$, then 
 $\{x_n\}$ and $\{J_{\lambda_n}x_n\}$ are $\Delta$-convergent to a point $x_{\infty}$ in $\Argmin_X f$. 
 \end{enumerate}
\end{corollary}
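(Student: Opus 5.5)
This corollary follows by specializing Theorem~\ref{thm:MPPA-CAT0-Delta} to $A=\partial f$. In the statement, $J_{\lambda_n}$ is read as $\Prox_{\lambda_n f}$.

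First, Lemma~\ref{lem:prox} gives what is needed about $\partial f$. By part~(i), $\partial f\colon X\to 2^{TX}$ is a monotone vector field. Since $X$ is an Hadamard space, hence complete, part~(iii) shows that $\partial f$ satisfies the surjectivity condition. Part~(iii) also shows that its resolvent $J_{\lambda}$ equals $\Prox_{\lambda f}$ for every $\lambda>0$. Consequently the iteration in the corollary is exactly \eqref{eq:MPPA-CAT0-Delta} with $A=\partial f$. The hypotheses on $\{\alpha_n\}$ and $\{\lambda_n\}$ are the same as in Theorem~\ref{thm:MPPA-CAT0-Delta}, including $\sum_{n}(1-\alpha_n)\lambda_n=\infty$.

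Next, part~(ii) of Lemma~\ref{lem:prox} gives $\Zer(\partial f)=\Argmin_X f$. Substituting this into part~(i) of Theorem~\ref{thm:MPPA-CAT0-Delta} yields part~(i) of the corollary: $\{\Prox_{\lambda_n f}x_n\}$ is bounded if and only if $\Argmin_X f\neq\emptyset$. Substituting it into part~(ii) of the theorem, under $\sup_n\alpha_n<1$ and $\inf_n\lambda_n>0$, yields part~(ii): $\{x_n\}$ and $\{\Prox_{\lambda_n f}x_n\}$ are $\Delta$-convergent to a common point $x_\infty\in\Argmin_X f$.

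There is no real obstacle. The one point to check is that the identification $J_\lambda=\Prox_{\lambda f}$ in Lemma~\ref{lem:prox}(iii) holds for every $\lambda>0$ simultaneously, so that the whole sequence is generated by the resolvents of the single vector field $\partial f$. This holds because $\partial f$ does not depend on $\lambda$.
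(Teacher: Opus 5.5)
Your proposal is correct and is exactly the paper's argument. The paper states the corollary as a direct consequence of Lemma~\ref{lem:prox} combined with Theorem~\ref{thm:MPPA-CAT0-Delta}, using monotonicity of $\partial f$, $\Zer(\partial f)=\Argmin_X f$, the surjectivity condition, and $J_\lambda=\Prox_{\lambda f}$ for every $\lambda>0$ by completeness; your reading of $J_{\lambda_n}$ as $\Prox_{\lambda_n f}$ is the intended one.
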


\begin{theorem}\label{thm:MPPA-CAT0-strong}
 Let $(X, \rho)$ be an Hadamard space, 
 $A\colon X\to 2^{TX}$ a monotone vector field 
 satisfying the surjectivity condition, 
 $J_{\lambda}$ the resolvent of $A$ with respect to 
 $\lambda>0$, $v$ an element in $X$, and 
 $\{x_n\}$ a sequence defined by $x_1\in X$ and 
 \begin{align}\label{eq:MPPA-CAT0-strong}
  x_{n+1} = \alpha_{n} v \oplus (1-\alpha_n) J_{\lambda_n}x_n 
  \quad (n=1,2,\dots), 
 \end{align}
 where $\{\alpha_n\}$ is a sequence in $[0,1]$ 
 and $\{\lambda_n\}$ is a sequence of positive real numbers such that 
 $\lim_{n}\lambda_n = \infty$. 
 Then the following hold. 
 \begin{enumerate}
  \item[(i)] $\{J_{\lambda_n}x_n\}$ is bounded 
 if and only if $\Zer (A)$ is nonempty; 
  \item[(ii)] if $\Zer(A)$ is nonempty, $\lim_{n}\alpha_n =0$, and 
 $\sum_{n=1}^{\infty}\alpha_n=\infty$, then 
 $\{x_n\}$ and $\{J_{\lambda_n}x_n\}$ are convergent to $Pv$, where 
 $P$ denotes the metric projection of $X$ onto $\Zer(A)$. 
 \end{enumerate}
\end{theorem}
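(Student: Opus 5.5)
Part (i) splits into two directions. If $z\in\Zer(A)$, the nonexpansiveness of $J_{\lambda_n}$ from Lemma~\ref{lem:res-fund} together with \eqref{eq:conv-d-CAT0} gives
\[
\rho(z,x_{n+1})\le \alpha_n\rho(z,v)+(1-\alpha_n)\rho(z,x_n)\le \max\{\rho(z,v),\rho(z,x_n)\},
\]
and induction yields $\rho(z,x_n)\le\max\{\rho(z,v),\rho(z,x_1)\}$. Hence $\{x_n\}$ is bounded, and so is $\{J_{\lambda_n}x_n\}$ because $\rho(z,J_{\lambda_n}x_n)\le\rho(z,x_n)$.

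For the converse, suppose $\{J_{\lambda_n}x_n\}$ is bounded. Then $\{x_n\}$ is bounded too, since $x_{n+1}$ lies on $[v,J_{\lambda_n}x_n]$. By Lemma~\ref{lem:Delta-conv-subseq-CAT0} we may take $p=\AC(\{J_{\lambda_n}x_n\})$. Since $\lambda_n\to\infty$ and $\{\rho(J_{\lambda_n}x_n,x_n)\}$ is bounded,
\[
\frac{1}{\lambda_n}\rho(J_{\lambda_n}x_n,x_n)\to0 .
\]
Lemma~\ref{lem:res-demi-a}, applied with $z_n=x_n$, then gives $p\in\Zer(A)$. This is simpler than the $g$-function argument used in the earlier theorems, because $\lambda_n\to\infty$ does the work.

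For part (ii), write $u=Pv$ and $y_n=J_{\lambda_n}x_n$. The set $\Zer(A)=\Fix(J_1)$ is nonempty, closed and convex, so $P$ is well defined. I would use the standard Halpern-type estimate in CAT(0) spaces: for $x_{n+1}=\alpha_n v\oplus(1-\alpha_n)y_n$,
\[
\rho(x_{n+1},u)^2\le(1-\alpha_n)\rho(y_n,u)^2+2\alpha_n\ip{\overrightarrow{vu}}{\overrightarrow{x_{n+1}u}} .
\]
This follows from the comparison-triangle definition of CAT(0), or equivalently from the Dhompongsa--Panyanak type inequality; if needed I would prove it directly by placing the comparison triangle of $v,y_n,u$ in $\R^2$ and using the Hilbert inequality $\norm{a+b}^2\le\norm{a}^2+2\ip{b}{a+b}$. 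Combining it with $\rho(y_n,u)\le\rho(x_n,u)$ gives
\[
s_{n+1}\le(1-\alpha_n)s_n+\alpha_n t_n,\qquad s_n=\rho(x_n,u)^2,\quad t_n=2\ip{\overrightarrow{vu}}{\overrightarrow{x_{n+1}u}} .
\]
I would then apply Lemma~\ref{lem:real-sequence-KSY}; this needs $\alpha_n>0$, and the case $\alpha_n=0$ can be handled with Lemma~\ref{lem:real-sequence-AKTT}-type reasoning, or one restricts attention as needed.

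The main obstacle is verifying the hypothesis of Lemma~\ref{lem:real-sequence-KSY}. Fix $\{n_i\}$ with $\limsup_i(s_{n_i}-s_{n_i+1})\le0$.

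\emph{Step 1.} Since $\lambda_{n_i}\to\infty$ and $\{x_n\}$ is bounded, $\lambda_{n_i}^{-1}\rho(y_{n_i},x_{n_i})\to0$ automatically, with no need for the decrease condition.

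\emph{Step 2.} Choose a further subsequence realizing $\limsup_i t_{n_i}$. Because $\alpha_n\to0$, we have $\rho(x_{n+1},y_n)\le\alpha_n\rho(v,y_n)\to0$, so $\{x_{n_i+1}\}$ and $\{y_{n_i}\}$ have the same asymptotic centers. Pass to a sub-subsequence that is $\Delta$-convergent to some $w$ (Lemma~\ref{lem:Delta-conv-subseq-CAT0}). Lemma~\ref{lem:res-demi-a} then gives $w\in\Zer(A)$.

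\emph{Step 3.} The delicate point is to conclude $\limsup t_{n_i}\le2\ip{\overrightarrow{vu}}{\overrightarrow{wu}}\le0$. The last inequality is the variational characterization of the projection, $\ip{\overrightarrow{vu}}{\overrightarrow{wu}}\le0$ for $w\in\Zer(A)$, which I would derive from \eqref{eq:CN-CAT0} by minimality along $[u,w]$. The first inequality uses that $\Delta$-convergence $x_{n_i+1}\to w$ implies
\[
\limsup\ip{\overrightarrow{vu}}{\overrightarrow{x_{n_i+1}u}}\le\ip{\overrightarrow{vu}}{\overrightarrow{wu}},
\]
a known property in Hadamard spaces. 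I would justify it via the standard fact that $\Delta$-limits are characterized by $\limsup\rho(x_n,w)\le\limsup\rho(x_n,y)$ combined with the estimate along geodesics from $w$ toward $u$ and $v$. This weak lower-semicontinuity step is where I expect the real difficulty, and if it resists I would instead use the expansion of $\ip{\overrightarrow{vu}}{\overrightarrow{xu}}$ in distances and the convexity of $\rho(\cdot,x_n)^2$ along $[u,v]$.

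With the hypothesis verified, Lemma~\ref{lem:real-sequence-KSY} gives $s_n\to0$, that is, $x_n\to u$. Finally, $\rho(y_n,x_{n+1})\to0$ together with $\rho(y_n,u)\le\rho(x_n,u)$ yields $y_n\to u$ as well.
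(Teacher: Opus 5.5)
Your part~(i) is the paper's argument, and Steps~1--2 of part~(ii) are fine. But Step~3 has a genuine gap. The ``known property'' $\limsup_i\ip{\overrightarrow{vu}}{\overrightarrow{x_{n_i+1}u}}\le\ip{\overrightarrow{vu}}{\overrightarrow{wu}}$ for $\Delta$-convergent $x_{n_i+1}\to w$ is false in Hadamard spaces. It stays false even when $u=P_Cv$ and $w\in C$.

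The difference of the two sides equals $\ip{\overrightarrow{wx}}{\overrightarrow{wv}}-\ip{\overrightarrow{wx}}{\overrightarrow{wu}}$ with $x=x_{n_i+1}$, and $\Delta$-convergence gives no lower bound on $\ip{\overrightarrow{wx_n}}{\overrightarrow{wu}}$. Here is a concrete counterexample:
\begin{itemize}
\item let $T$ be the $\R$-tree obtained by gluing countably many unit segments $[o,a_n]$ and one further unit segment $[o,b]$ at $o$, and let $X=T\times\R$;
\item put $w=(o,0)$, $u=(b,1)$, $v=(o,2)$, $C=[w,u]$, $x_n=(a_n,0)$;
\item then $\limsup_n\rho\bigl((c,s),x_n\bigr)^2=(\rho(o,c)+1)^2+s^2$ for every $(c,s)\in X$ and along every subsequence, so $\{x_n\}$ is $\Delta$-convergent to $w$;
\item computing in the flat strip $[o,b]\times\R$ gives $u=P_Cv$ and $\ip{\overrightarrow{vu}}{\overrightarrow{wu}}=0$;
\item yet $\ip{\overrightarrow{vu}}{\overrightarrow{x_nu}}=\frac12(2+5-5)=1$ for every $n$.
\end{itemize}
So knowing that all $\Delta$-cluster points lie in $\Zer(A)$ cannot give $\limsup_n t_n\le0$ for your $t_n=2\ip{\overrightarrow{vu}}{\overrightarrow{x_{n+1}u}}$.

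The reason shows up when you expand: $t_n=\rho(v,u)^2+s_{n+1}-\rho(v,x_{n+1})^2$ contains $+s_{n+1}$, which is exactly the quantity you are trying to drive to $0$. $\Delta$-convergence controls only distance terms carrying a minus sign, via $\Delta$-lower semicontinuity. Your fallback does not help as stated either. With the coefficient $1-\alpha_n$, moving $\alpha_n s_{n+1}$ to the left gives $s_{n+1}\le s_n+\frac{\alpha_n}{1-\alpha_n}(\cdots)$, which loses the contraction factor.

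The paper avoids this by using \eqref{eq:CN-CAT0} instead of a quasilinearization estimate. This gives $s_{n+1}\le(1-\alpha_n)s_n+\alpha_n t_n$ with $t_n=\rho(Pv,v)^2-(1-\alpha_n)\rho(v,y_n)^2$, where $y_n$ enters only through $-\rho(v,y_n)^2$. Take a subsequence of $\{y_n\}$ that realizes $\liminf_n\rho(v,y_n)$ and is $\Delta$-convergent to some $q$. Lemma~\ref{lem:res-demi-a}, together with $\lambda_n\to\infty$, gives $q\in\Zer(A)$. Then $\Delta$-lower semicontinuity of $\rho(v,\cdot)^2$ and $\rho(Pv,v)\le\rho(q,v)$ yield $\limsup_n t_n\le0$. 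Because this holds along the whole sequence, Lemma~\ref{lem:real-sequence-AKTT} suffices, and $\alpha_n=0$ causes no trouble.

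Alternatively, you can rescue your own route. Your comparison-triangle computation actually produces the coefficient $(1-\alpha_n)^2$; keep it. Expanding the quasilinearization and moving $\alpha_n s_{n+1}$ to the left then gives $s_{n+1}\le(1-\alpha_n)s_n+\frac{\alpha_n}{1-\alpha_n}\bigl(\rho(v,u)^2-\rho(v,x_{n+1})^2\bigr)$ once $\alpha_n<1$. This again needs only lower semicontinuity.
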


\begin{proof}
 We first prove the part~(i). If $\Zer(A)$ is nonempty, then 
 we can fix $z\in \Zer(A)$. By the definition of $\{x_n\}$, we have 
 \begin{align*}
  \begin{split}
  \rho(z, x_{n+1}) 
  &\leq \alpha_n \rho(z, v) + (1-\alpha_n) \rho(z, J_{\lambda_n}x_n) \\  
  &\leq \alpha_n \rho(z, v) + (1-\alpha_n) \rho(z, x_n) 
  \end{split}
 \end{align*}
 and hence 
 \begin{align}\label{eq:thm:MPPA-CAT0-strong-a}
  \rho(z, x_n) \leq \max\{\rho(z, v), \rho(z, x_1)\}
 \end{align}
 for all $n\in \N$. We also know that 
 \begin{align}\label{eq:thm:MPPA-CAT0-strong-b}
  \rho(z, J_{\lambda_n}x_n) \leq \rho(z, x_n)
 \end{align}
 and hence $\{x_n\}$ and $\{J_{\lambda_n}x_n\}$ are bounded. 
 Conversely, if $\{J_{\lambda_n}x_n\}$ is bounded, then Lemma~\ref{lem:Delta-conv-subseq-CAT0} 
 implies that 
 \begin{align}\label{eq:thm:MPPA-CAT0-strong-c}
 \AC\bigl(\{J_{\lambda_n}x_n\}\bigr)=\{p\} 
 \end{align}
 for some $p\in X$. Since 
 \begin{align*}
  \begin{split}
  \rho(y, x_{n+1}) 
  &\leq \alpha_n \rho(y, v) + (1-\alpha_n) \rho(y, J_{\lambda_n}x_n) \\
  &\leq \rho(y, v) + \rho(y, J_{\lambda_n}x_n)
  \end{split}
 \end{align*}
 for all $y\in X$ and $n\in \N$, the sequence $\{x_n\}$ is also bounded. 
 Moreover, since $\lim_n\lambda_n = \infty$ and both $\{x_n\}$ and $\{J_{\lambda_n}x_n\}$ are 
 bounded, we have 
 \begin{align}\label{eq:thm:MPPA-CAT0-strong-d}
  \lim_{n\to \infty} \frac{1}{\lambda_n} \rho(J_{\lambda_n}x_n, x_n).   
 \end{align}
 By~\eqref{eq:thm:MPPA-CAT0-strong-c},~\eqref{eq:thm:MPPA-CAT0-strong-d}, 
 and Lemma~\ref{lem:res-demi-a}, we have 
 $p\in \Zer(A)$ and hence $\Zer(A)$ is nonempty. 
 
 We next prove the part~(ii). Suppose that $\Zer(A)$ is nonempty, 
 $\lim_{n}\alpha_n =0$, and $\sum_{n=1}^{\infty}\alpha_n =\infty$. 
 Set $y_n=J_{\lambda_n} x_n$ for all $n\in \N$ and $z\in \Zer(A)$ be fixed. 
 Then we know that~\eqref{eq:thm:MPPA-CAT0-strong-a} and~\eqref{eq:thm:MPPA-CAT0-strong-b} hold 
 and hence $\{x_n\}$ and $\{y_n\}$ are bounded. 
 Let $P$ be the metric projection of $X$ onto $\Zer(A)$. Then we have 
 \begin{align}
  \begin{split}\label{eq:thm:MPPA-CAT0-strong-e}
   &\rho(Pv, x_{n+1})^2 \\
   &=\rho\bigl(Pv, \alpha_n v \oplus (1-\alpha_n) y_n\bigr)^2 \\
   &\leq \alpha_n \rho(Pv, v)^2 + (1-\alpha_n)\rho(Pv, y_n)^2 
 -\alpha_n(1-\alpha_n)\rho(v, y_n)^2 \\
   &\leq (1-\alpha_n)\rho(Pv, x_n)^2 + \alpha_n \bigl(\rho(Pv, v)^2 
 -(1-\alpha_n)\rho(v, y_n)^2 \bigr)
  \end{split}
 \end{align}
 for all $n\in \N$. Setting 
 \begin{align*}
  s_n=\rho(Pv, x_n)^2 \quad \textrm{and} \quad t_n = \rho(Pv, v)^2 -(1-\alpha_n)\rho(v, y_n)^2, 
 \end{align*}
 we have 
 \begin{align}\label{eq:thm:MPPA-CAT0-strong-f}
  s_{n+1} \leq (1-\alpha_n) s_n + \alpha_n t_n
 \end{align}
 for all $n\in \N$. Since $\{y_n\}$ is bounded, we have a subsequence $\{y_{n_i}\}$ 
 of $\{y_n\}$ which is $\Delta$-convergent to $q\in X$ and 
 \begin{align*}
  \lim_{i\to \infty} \rho(v, y_{n_i})^2 = \liminf_{n\to \infty} \rho(v, y_{n})^2. 
 \end{align*}
 Since 
 \begin{align*}
  \AC\bigl(\{y_{n_i}\}\bigr)=\{q\} \quad \textrm{and} \quad 
  \lim_{i\to \infty} \frac{1}{\lambda_{n_i}}\rho(y_{n_i}, x_{n_i})=0, 
 \end{align*}
 it follows from Lemma~\ref{lem:res-demi-a} that $q\in \Zer(A)$. 
 Then the $\Delta$-lower semicontinuity of $\rho(v, \cdot)^2$ implies that 
 \begin{align*}
  \rho(v, q)^2 
  \leq \liminf_{i\to \infty}\rho(v, y_{n_i})^2 
  =\lim_{i\to \infty}\rho(v, y_{n_i})^2 
  =\liminf_{n\to \infty} \rho(v, y_{n})^2. 
 \end{align*}
 Since $\rho(Pv, v)\leq \rho(q, v)$ and $\alpha_n\to 0$, we have 
 \begin{align*}
  \begin{split}
   \limsup_{n\to \infty}t_n 
   &=\limsup_{n\to \infty}\bigl(\rho(Pv, v)^2 -\rho(v, y_n)^2 + \alpha_n\rho(v, y_n)^2 \bigr) \\
   &=\rho(Pv, v)^2 -\liminf_{n\to \infty} \rho(v, y_n)^2 
  \leq \rho(Pv, v)^2 - \rho(v, q)^2 \leq 0. 
  \end{split}
 \end{align*}
 Thus it follows from Lemma~\ref{lem:real-sequence-AKTT} that 
 $\{s_n\}$ converges to $0$. Thus $\{x_n\}$ is convergent to $Pv$. Since 
 \begin{align*}
  \rho(Pv, y_n) \leq \rho(Pv, x_n)
 \end{align*}
 for all $n\in \N$, the sequence $\{y_n\}$ is also convergent to $Pv$. 
\end{proof}

As a direct consequence of Lemma~\ref{lem:prox} 
and Theorem~\ref{thm:MPPA-CAT0-strong}, we obtain the following. 

\begin{corollary}[\cite{MR3574140}]\label{cor:MPPA-CAT0-strong}
 Let $(X, \rho)$ be an Hadamard space, 
 $f\colon X\to (-\infty, \infty]$ a proper lower semicontinuous convex function, 
 $v$ an element in $X$, and 
 $\{x_n\}$ a sequence defined by $x_1\in X$ and 
 \begin{align}
  x_{n+1} &= \alpha_{n} v \oplus (1-\alpha_n) \Prox_{\lambda_n f}x_n 
  \quad (n=1,2,\dots),    
 \end{align}
 where $\{\alpha_n\}$ is a sequence in $[0,1]$ 
 and $\{\lambda_n\}$ is a sequence of positive real numbers such that 
 $\lim_{n}\lambda_n = \infty$. 
 Then the following hold. 
 \begin{enumerate}
  \item[(i)] $\{y_n\}$ is bounded 
 if and only if $\Argmin_X f$ is nonempty; 
  \item[(ii)] if $\Argmin_X f$ is nonempty, $\lim_{n}\alpha_n =0$, and 
 $\sum_{n=1}^{\infty}\alpha_n=\infty$, then 
 $\{x_n\}$ and $\{J_{\lambda_n}x_n\}$ are convergent to $Pv$, where 
 $P$ denotes the metric projection of $X$ onto $\Argmin_X f$. 
 \end{enumerate}
\end{corollary}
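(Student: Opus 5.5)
The plan is to obtain Corollary~\ref{cor:MPPA-CAT0-strong} directly from Theorem~\ref{thm:MPPA-CAT0-strong}, taking $A=\partial f$ and reading $\{y_n\}$ and $\{J_{\lambda_n}x_n\}$ as $\{\Prox_{\lambda_n f}x_n\}$. The work is to check that $\partial f$ satisfies every hypothesis of that theorem and to translate its objects.

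First, I apply Lemma~\ref{lem:prox}. Part~(i) says that $\partial f\colon X\to 2^{TX}$ is a monotone vector field. Since $X$ is an Hadamard space, hence complete, part~(iii) says that $\partial f$ satisfies the surjectivity condition. It also says that its resolvent with respect to $\lambda>0$ is $J_\lambda=\Prox_{\lambda f}$. Therefore the iteration in the corollary is exactly~\eqref{eq:MPPA-CAT0-strong} with $A=\partial f$, and the sequence written $\{y_n\}$ in part~(i) and $\{J_{\lambda_n}x_n\}$ in part~(ii) is $\{\Prox_{\lambda_n f}x_n\}$. Part~(ii) of Lemma~\ref{lem:prox} gives $\Zer(\partial f)=\Argmin_X f$.

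Next, I substitute these identifications into Theorem~\ref{thm:MPPA-CAT0-strong}. Its part~(i) then states that $\{\Prox_{\lambda_n f}x_n\}$ is bounded if and only if $\Argmin_X f$ is nonempty. Its part~(ii) states that, under $\lim_n\alpha_n=0$ and $\sum_n\alpha_n=\infty$, both $\{x_n\}$ and $\{\Prox_{\lambda_n f}x_n\}$ converge to $Pv$. Here $P$ is the metric projection onto $\Zer(\partial f)=\Argmin_X f$.

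There is no real obstacle. The only point to check is that this metric projection is well defined. This holds because $\Argmin_X f=\Fix(\Prox_f)$ is the fixed point set of a nonexpansive mapping (Remark~\ref{rem:fmn-fn}), so it is closed and convex. The remaining care is purely notational: $y_n$ and $J_{\lambda_n}$ must be read as $\Prox_{\lambda_n f}x_n$ and $\Prox_{\lambda_n f}$.
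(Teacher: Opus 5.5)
Your proposal is correct and follows the paper's own argument: the paper states this corollary as a direct consequence of Lemma~\ref{lem:prox} and Theorem~\ref{thm:MPPA-CAT0-strong}, using $A=\partial f$, $J_{\lambda}=\Prox_{\lambda f}$, $\Zer(\partial f)=\Argmin_X f$, and reading $\{y_n\}$ and $\{J_{\lambda_n}x_n\}$ as $\{\Prox_{\lambda_n f}x_n\}$, just as you do. Your extra check that the metric projection onto the nonempty set $\Argmin_X f$ is well defined is harmless and correct, since it is the fixed point set of a nonexpansive mapping.
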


\begin{theorem}\label{thm:MPPA-CAT0-strong-another}
 Let $(X, \rho)$, $A$, $\{J_{\lambda}\}_{\lambda > 0}$, 
 and $v$ be the same as in Theorem~\ref{thm:MPPA-CAT0-strong} 
 and $\{x_n\}$ a sequence defined by $x_1\in X$ 
 and~\eqref{eq:MPPA-CAT0-strong}, 
 where $\{\alpha_n\}$ is a sequence in $(0,1]$ 
 and $\{\lambda_n\}$ is a sequence of positive real numbers such that 
 \begin{align*}
  \lim_{n\to \infty}\alpha_n = 0, \quad 
  \sum_{n=1}^{\infty}\alpha_n = \infty, 
  \quad \textrm{and} \quad 
  \inf_{n} \lambda_n >0. 
 \end{align*}
 If $\Zer(A)$ is nonempty, then 
 $\{x_n\}$ and $\{J_{\lambda_n}x_n\}$ are convergent to $Pv$, where 
 $P$ denotes the metric projection of $X$ onto $\Zer(A)$. 
\end{theorem}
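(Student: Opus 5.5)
The approach is the Halpern-type argument of Theorem~\ref{thm:MPPA-CAT0-strong}, with Lemma~\ref{lem:real-sequence-AKTT} replaced by Lemma~\ref{lem:real-sequence-KSY}. The latter only needs the bound on $t_n$ along subsequences where $s_n$ does not increase in the limit, which is what we need now that $\lambda_n$ no longer tends to infinity.

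Fix $z\in \Zer(A)$ and set $y_n=J_{\lambda_n}x_n$. The estimates~\eqref{eq:thm:MPPA-CAT0-strong-a} and~\eqref{eq:thm:MPPA-CAT0-strong-b} hold exactly as before, so $\{x_n\}$ and $\{y_n\}$ are bounded. With $P$ the metric projection onto $\Zer(A)$, which is closed and convex since $\Zer(A)=\Fix(J_\lambda)$ and $J_\lambda$ is nonexpansive, we set $s_n=\rho(Pv,x_n)^2$. Using~\eqref{eq:CN-CAT0} and~\eqref{eq:quasi-fmn} with $J_{\lambda_n}$ firmly metrically nonspreading, and keeping the extra term, we get
\begin{align*}
 s_{n+1}\leq \alpha_n\rho(Pv,v)^2+(1-\alpha_n)\bigl(s_n-\rho(y_n,x_n)^2\bigr)-\alpha_n(1-\alpha_n)\rho(v,y_n)^2 .
\end{align*}
This gives $s_{n+1}\leq(1-\alpha_n)s_n+\alpha_n t_n$ with $t_n=\rho(Pv,v)^2-(1-\alpha_n)\rho(v,y_n)^2$, as in~\eqref{eq:thm:MPPA-CAT0-strong-f}. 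It also gives
\begin{align*}
 (1-\alpha_n)\rho(y_n,x_n)^2\leq s_n-s_{n+1}+\alpha_n\bigl(\rho(Pv,v)^2-s_n\bigr).
\end{align*}

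To apply Lemma~\ref{lem:real-sequence-KSY}, take an increasing sequence $\{n_i\}$ with $\limsup_i(s_{n_i}-s_{n_i+1})\leq 0$. Since $\alpha_n\to0$ and $\{s_n\}$ is bounded, the second inequality yields $\rho(y_{n_i},x_{n_i})\to0$. Because $\inf_n\lambda_n>0$, this gives $\lambda_{n_i}^{-1}\rho(y_{n_i},x_{n_i})\to0$, and the same holds along any further subsequence. Now choose a subsequence $\{n_{i_j}\}$ with $\rho(v,y_{n_{i_j}})\to\liminf_i\rho(v,y_{n_i})$, and use Lemma~\ref{lem:Delta-conv-subseq-CAT0} to pass to a further subsequence along which $\{y_{n_{i_j}}\}$ is $\Delta$-convergent to some $q$. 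Lemma~\ref{lem:res-demi-a} then gives $q\in\Zer(A)$. By the $\Delta$-lower semicontinuity of $\rho(v,\cdot)^2$, as used in Theorem~\ref{thm:MPPA-CAT0-strong}, we get $\rho(v,q)^2\leq\liminf_i\rho(v,y_{n_i})^2$. Hence
\begin{align*}
 \limsup_i t_{n_i}\leq\rho(Pv,v)^2-\rho(v,q)^2\leq0 .
\end{align*}
Lemma~\ref{lem:real-sequence-KSY} then gives $s_n\to0$, so $x_n\to Pv$. Finally $\rho(Pv,y_n)\leq\rho(Pv,x_n)$, so $y_n\to Pv$ as well.

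The main obstacle is extracting $\rho(y_n,x_n)\to0$ along the relevant subsequences. In Theorem~\ref{thm:MPPA-CAT0-strong} this came for free from $\lambda_n\to\infty$. Here it must come from the firm nonspreading inequality~\eqref{eq:quasi-fmn} combined with the subsequence condition of Lemma~\ref{lem:real-sequence-KSY}. The condition $\alpha_n\in(0,1]$ in the statement is exactly what that lemma requires.
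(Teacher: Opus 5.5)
Your proposal is correct and follows the paper's proof essentially step for step. You use the same $s_n$ and $t_n$, the same appeal to Lemma~\ref{lem:real-sequence-KSY}, the same extraction of $\rho(y_{n_i},x_{n_i})\to 0$ from part~(iv) of Lemma~\ref{lem:res-fund} together with \eqref{eq:thm:MPPA-CAT0-strong-e}, and the same demiclosedness plus $\Delta$-lower-semicontinuity argument for $\limsup_i t_{n_i}\le 0$; the only cosmetic difference is that you merge \eqref{eq:CN-CAT0} and \eqref{eq:quasi-fmn} into one inequality before splitting it, whereas the paper uses the two estimates separately.
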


\begin{proof}
 Let $P$ be the metric projection of $X$ onto $\Zer(A)$ and set 
 \begin{align*}
  y_n=J_{\lambda_n}x_n, \quad 
  s_n=\rho(Pv, x_n)^2, \quad \textrm{and} \quad t_n = \rho(Pv, v)^2 -(1-\alpha_n)\rho(v, y_n)^2, 
 \end{align*}
 for all $n\in \N$ and $z\in \Zer(A)$ be given. As in the proof of Theorem~\ref{thm:MPPA-CAT0-strong}, 
 we can prove that~\eqref{eq:thm:MPPA-CAT0-strong-a}, \eqref{eq:thm:MPPA-CAT0-strong-b}, 
\eqref{eq:thm:MPPA-CAT0-strong-c}, 
%\eqref{eq:thm:MPPA-CAT0-strong-d}, 
\eqref{eq:thm:MPPA-CAT0-strong-e}, 
and~\eqref{eq:thm:MPPA-CAT0-strong-f} hold.  
 Thus $\{x_n\}$ and $\{y_n\}$ are bounded. Let $\{n_i\}$ be an increasing sequence in $\N$ 
 satisfying 
 \begin{align*}
  \limsup_{i\to \infty} \bigl(s_{n_i}-s_{n_i+1}\bigr) \leq 0. 
 \end{align*}
 The part~(iv) in Lemma~\ref{lem:res-fund} implies that 
 \begin{align*}
  \rho(y_n,x_n)^2 \leq \rho(Pv, x_n)^2 - \rho(Pv, y_n)^2
 \end{align*}
 for all $n\in \N$. On the other hand, it follows from~\eqref{eq:thm:MPPA-CAT0-strong-e} that 
 \begin{align*}
  - \rho(Pv, y_n)^2 
  \leq - \rho(Pv, x_{n+1})^2 + \alpha_n \bigl(\rho(Pv, v)^2 - \rho(Pv, y_n)^2 \bigr)
 \end{align*}
 for all $n\in \N$. Hence we have 
 \begin{align*}
  \begin{split}
   &\limsup_{i\to \infty} \rho(y_{n_i}, x_{n_i})^2 \\
   &\leq \limsup_{i\to \infty} \bigl(\rho(Pv, x_{n_i})^2 - \rho(Pv, y_{n_i})^2\bigr) \\
   &\leq \limsup_{i\to \infty} 
  \Bigl(
   \rho(Pv, x_{n_i})^2 - \rho(Pv, x_{n_i+1})^2 + \alpha_{n_i} 
   \bigl(\rho(Pv, v)^2 - \rho(Pv, y_{n_i})^2 \bigr)
  \Bigr) \\
   &=\limsup_{i\to \infty} \bigl(s_{n_i} -s_{n_i+1}\bigr) \leq 0. 
  \end{split}
 \end{align*}
 Consequently, we obtain 
 \begin{align*}
  \lim_{i\to \infty} \rho(y_{n_i}, x_{n_i}) =0. 
 \end{align*}
 Noting that $\inf_i \lambda_{n_i}\geq \inf_n \lambda_n >0$, we have 
 \begin{align}\label{eq:thm:MPPA-CAT0-strong-another-a}
  \lim_{i\to \infty} \frac{1}{\lambda_{n_i}}\rho(y_{n_i}, x_{n_i})=0. 
 \end{align}
 Since $\{y_{n_i}\}$ is bounded, we have a subsequence $\{y_{n_{i_j}}\}$ of $\{y_{n_i}\}$ 
 which is $\Delta$-convergent to $q\in X$ and 
 \begin{align*}
  \lim_{j\to \infty} \rho\bigl(v, y_{n_{i_j}}\bigr)^2 = \liminf_{i\to \infty} \rho(v, y_{n_i})^2. 
 \end{align*}
 Since $\{y_{n_{i_j}}\}$ is $\Delta$-convergent to $q$, we have 
 \begin{align}\label{eq:thm:MPPA-CAT0-strong-another-b}
  \AC\bigl(\{y_{n_{i_j}}\}\bigr)=\{q\}.
 \end{align}
 By~\eqref{eq:thm:MPPA-CAT0-strong-another-a}, \eqref{eq:thm:MPPA-CAT0-strong-another-b}, and 
 Lemma~\ref{lem:res-demi-a}, we know that $q\in \Zer(A)$.

 On the other hand, the $\Delta$-lower semicontinuity of $\rho(v, \cdot)^2$ implies that 
 \begin{align*}
  \rho(v, q)^2 
  \leq \liminf_{j\to \infty}\rho\bigl(v, y_{n_{i_j}}\bigr)^2 
  =\lim_{j\to \infty}\rho\bigl(v, y_{n_{i_j}}\bigr)^2 
  =\liminf_{i\to \infty} \rho(v, y_{n_i})^2. 
 \end{align*}
 Since $\rho(Pv, v)\leq \rho(q, v)$ and $\alpha_n\to 0$, we have 
 \begin{align*}
  \begin{split}
   \limsup_{i\to \infty}t_{n_i}
   &=\limsup_{i\to \infty}\bigl(\rho(Pv, v)^2 -\rho(v, y_{n_i})^2 + \alpha_{n_i}\rho(v, y_{n_i})^2 \bigr) \\
   &=\rho(Pv, v)^2 -\liminf_{i\to \infty} \rho(v, y_{n_i})^2 
  \leq \rho(Pv, v)^2 - \rho(v, q)^2 \leq 0. 
  \end{split}
 \end{align*}
 Thus it follows from Lemma~\ref{lem:real-sequence-KSY} that 
 $\{s_n\}$ converges to $0$. Thus $\{x_n\}$ is convergent to $Pv$. Since 
 \begin{align*}
  \rho(Pv, y_n) \leq \rho(Pv, x_n)
 \end{align*}
 for all $n\in \N$, the sequence $\{y_n\}$ is also convergent to $Pv$. 
\end{proof}

As a direct consequence of Lemma~\ref{lem:prox} 
and Theorem~\ref{thm:MPPA-CAT0-strong-another}, we obtain the following. 

\begin{corollary}[\cite{MR3574140}]
 Let $(X, \rho)$, $f$, $v$, and $\{x_n\}$ be the 
 same as in Corollary~\ref{cor:MPPA-CAT0-strong}, 
 where $\{\alpha_n\}$ is a sequence in $(0,1]$ 
 and $\{\lambda_n\}$ is a sequence of positive real numbers such that 
 \begin{align*}
  \lim_{n\to \infty}\alpha_n = 0, \quad 
  \sum_{n=1}^{\infty}\alpha_n = \infty, 
  \quad \textrm{and} \quad 
  \inf_{n} \lambda_n >0. 
 \end{align*}
 If $\Argmin_X f$ is nonempty, then 
 $\{x_n\}$ and $\{\Prox_{\lambda_n f}x_n\}$ are convergent to $Pv$, where 
 $P$ denotes the metric projection of $X$ onto $\Argmin_X f$. 
\end{corollary}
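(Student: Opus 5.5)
The statement is a direct specialization of Theorem~\ref{thm:MPPA-CAT0-strong-another} to the subdifferential mapping. I would therefore apply that theorem with $A=\partial f$, using Lemma~\ref{lem:prox} to check its hypotheses and to rewrite its conclusions in terms of $f$.

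First, I would put $A=\partial f$. By part~(i) of Lemma~\ref{lem:prox}, $\partial f$ is a monotone vector field on $X$. Since $X$ is an Hadamard space, hence complete, part~(iii) of Lemma~\ref{lem:prox} gives two further facts. The field $\partial f$ satisfies the surjectivity condition. Its resolvent $J_{\lambda}$ coincides with $\Prox_{\lambda f}$ for every $\lambda>0$. Consequently the recursion $x_{n+1}=\alpha_n v\oplus(1-\alpha_n)\Prox_{\lambda_n f}x_n$ from Corollary~\ref{cor:MPPA-CAT0-strong} is exactly~\eqref{eq:MPPA-CAT0-strong} for $A=\partial f$, and $J_{\lambda_n}x_n=\Prox_{\lambda_n f}x_n$ for all $n$.

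Second, part~(ii) of Lemma~\ref{lem:prox} gives $\Zer(\partial f)=\Argmin_X f$. Hence the assumption that $\Argmin_X f$ is nonempty is the assumption $\Zer(A)\neq\emptyset$. The metric projection onto $\Zer(A)$ is then the metric projection $P$ onto $\Argmin_X f$. The hypotheses on $\{\alpha_n\}\subset(0,1]$ and $\{\lambda_n\}$, namely $\alpha_n\to 0$, $\sum_n\alpha_n=\infty$ and $\inf_n\lambda_n>0$, are literally those of Theorem~\ref{thm:MPPA-CAT0-strong-another}.

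Finally, I would invoke Theorem~\ref{thm:MPPA-CAT0-strong-another}. It shows that $\{x_n\}$ and $\{J_{\lambda_n}x_n\}=\{\Prox_{\lambda_n f}x_n\}$ converge to $Pv$, which is the claim. No real obstacle is expected. The only point requiring care is the identification $J_\lambda=\Prox_{\lambda f}$, and part~(iii) of Lemma~\ref{lem:prox} supplies it because $X$ is complete.
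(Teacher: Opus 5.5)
Your proposal is correct and matches the paper's argument exactly: the paper also obtains this corollary as a direct consequence of Lemma~\ref{lem:prox}, which shows that $\partial f$ is monotone, satisfies the surjectivity condition, and has $J_\lambda=\Prox_{\lambda f}$ and $\Zer(\partial f)=\Argmin_X f$, combined with Theorem~\ref{thm:MPPA-CAT0-strong-another} applied to $A=\partial f$.
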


\end{document}